\let\olddiamond\diamond
\let\oldsquare\square 
\renewcommand{\square}{\oldsquare}
\renewcommand{\diamond}{\olddiamond}
\crefname{enumi}{}{}
\crefname{equation}{}{}
\newsavebox{\precone}
\savebox{\precone}{%
  \begin{tikzpicture}[baseline=-0.65ex, scale=0.13,inner sep=0pt, outer sep=0pt]
    \draw (0,0) ellipse (1 and 0.35);
    \draw (-1,0) -- (0,-2) -- (1,0);
    \draw[dashed] (-1,0) arc (180:360:1 and 0.35);
  \end{tikzpicture}
}
\newcommand{\cone}{\makebox[8pt][l]{\usebox{\precone}}}
\newcommand{\bacon}{\mathbin{\scalerel*{\cone}{\bigtriangleup}}}
\newcommand{\baconsol}{\bacon_{\scriptscriptstyle{\mathrm{sol}}}}
\newcommand{\baconsub}{\bacon_{\scriptscriptstyle{\mathrm{sub}}}}
\newcommand{\baconconv}{\bacon_{\scriptscriptstyle{\mathrm{conv}}}}
\newcommand{\bacontrunc}{\bacon_{\scriptscriptstyle{\mathrm{trunc}}}}
\newcommand{\baconpow}{\bacon_{\scriptscriptstyle{\mathrm{pow}}}}
\newcommand{\baconmax}{\bacon_{\scriptscriptstyle{\mathrm{max}}}}
\numberwithin{equation}{section}
\numberwithin{figure}{section}
\newtheorem{theorem}{Theorem}[section]
\newtheorem{corollary}[theorem]{Corollary}
\newtheorem{proposition}[theorem]{Proposition}
\newtheorem{lemma}[theorem]{Lemma}
\theoremstyle{definition}
\newtheorem{definition}[theorem]{Definition}
\newtheorem{remark}[theorem]{Remark}
\DeclareMathOperator{\dist}{dist}
\newcommand*{\supp}{\ensuremath{\mathrm{supp\,}}}
\newcommand{\sgn}{\operatorname{sgn}}
\newcommand*{\N}{\ensuremath{\mathbb{N}}}
\newcommand*{\Z}{\ensuremath{\mathbb{Z}}}
\newcommand*{\R}{\ensuremath{\mathbb{R}}}
\newcommand{\E}{\mathbb{E}}
\newcommand*{\Zd}{\ensuremath{\mathbb{Z}^d}}
\newcommand*{\Rd}{\ensuremath{\mathbb{R}^d}}
\newcommand{\A}{\mathcal{A}}
\renewcommand{\b}{\ensuremath{\mathbf{b}}}
\newcommand{\f}{\mathbf{f}}
\renewcommand{\a}{\mathbf{a}}
\newcommand*{\Id}{\ensuremath{\mathrm{I}_d}}
\newcommand{\eps}{\varepsilon}
\renewcommand*{\tilde}{\widetilde}
\DeclareSymbolFont{boldoperators}{OT1}{cmr}{bx}{n}
\newcommand{\ep}{\eps}
\newcommand{\negphantom}{\v@true\h@true\negph@nt} 
\newcommand{\neghphantom}{\v@false\h@true\negph@nt} 
\newcommand{\negph@nt}{\ifmmode\expandafter\mathpalette 
  \expandafter\mathnegph@nt\else\expandafter\makenegph@nt\fi} 
\newcommand{\makenegph@nt}[1]{%
  \setbox\z@\hbox{\color@begingroup#1\color@endgroup}\finnegph@nt} 
\newcommand{\finnegph@nt}{%
  \setbox\tw@\null 
  \ifv@ \ht\tw@\ht\z@\dp\tw@\dp\z@\fi \ifh@\wd\tw@-\wd\z@\fi\box\tw@} 
\newcommand{\mathnegph@nt}[2]{%
  \setbox\z@\hbox{$\m@th #1{#2}$}\finnegph@nt} 
\newcommand{\Wminusul}[2]{\hat{\phantom{W}}\negphantom{W}\underline{\phantom{H}}\negphantom{H}W^{#1,#2}}
\newcommand{\Wminusnoul}[2]{{\phantom{W}}\negphantom{W}\Wul{#1}{#2}}
\newcommand{\Wul}[2]{\underline{\phantom{H}}\negphantom{H}W^{#1,#2}}
\newcommand{\Woul}[2]{\underline{\phantom{H}}\negphantom{H}W_0^{#1,#2}}
\newcommand{\Besov}[3]{\underline{B}_{#1,#2}^{#3}}
\newcommand{\cs}{\mathfrak{c}}
\newcommand{\BesovDualSum}[3]{%
  \mathord{\mathring{\mkern-2mu\underline{B}}}^{#3}_{#1,#2}%
}
\newcommand{\BesovDual}[3]{%
  \mathord{\widehat{\mkern-2mu\underline{B}}}\!^{#3}_{#1,#2}%
}
\def\Xint#1{\mathchoice
{\XXint\displaystyle\textstyle{#1}}%
{\XXint\textstyle\scriptstyle{#1}}%
{\XXint\scriptstyle\scriptscriptstyle{#1}}%
{\XXint\scriptscriptstyle\scriptscriptstyle{#1}}%
\!\int}
\def\XXint#1#2#3{{\setbox0=\hbox{$#1{#2#3}{\int}$}
\vcenter{\hbox{$#2#3$}}\kern-.5\wd0}}
\def\fint{\Xint-}
\newcommand{\avsum}{\mathop{\mathpalette\avsuminner\relax}\displaylimits}
\newcommand\avsuminner[2]{%
  {\sbox0{$\m@th#1\sum$}%
   \vphantom{\usebox0}%
   \ooalign{%
     \hidewidth
     \smash{\,\rule[.23em]{8.8pt}{1.1pt} \relax}%
     \hidewidth\cr
     $\m@th#1\sum$\cr
   }%
  }%
}
\newcommand\avsuminnerr[2]{%
  {\sbox0{$\m@th#1\sum$}%
   \vphantom{\usebox0}%
   \ooalign{%
     \hidewidth
     \smash{\,\rule[.23em]{6pt}{0.7pt} \relax}%
     \hidewidth\cr
     $\m@th#1\sum$\cr
   }%
  }%
}
\let\originalleft\left
\let\originalright\right
\renewcommand{\left}{\mathopen{}\mathclose\bgroup\originalleft}
\renewcommand{\right}{\aftergroup\egroup\originalright}
\newcommand{\cu}{\square}
\newcommand{\indc}{{\mathbbm{1}}}
\renewcommand{\hat}{\widehat}
\newcommand{\addperiod}[1]{#1.}
\titleformat{\subsection}[runin]
  {\normalfont\bfseries}
  {\thesubsection.}
  {0.5em}
  {\addperiod}
\titleformat{\subsubsection}[runin]
  {\normalfont\bfseries}
  {\thesubsubsection.}
  {0.5em}
  {\addperiod}
\titleformat*{\subsubsection}{\normalfont\itshape}
\titleformat*{\paragraph}{\bfseries}
\titleformat*{\subparagraph}{\large\bfseries}
\title{Coarse-grained ellipticity and De Giorgi-Nash-Moser theory}
\author{S. Armstrong
\thanks{CNRS \& Laboratoire Jacques-Louis Lions, Sorbonne Universit\'e 
and
Courant Institute of Mathematical Sciences, New York University.
{\footnotesize \href{mailto:scottnarmstrong@gmail.com}{scottnarmstrong@gmail.com}.}}
\and
B. Avelin
\thanks{Department of Mathematics, Uppsala University.
{\footnotesize \href{mailto:benny.avelin@math.uu.se}{benny.avelin@math.uu.se}.}}
\and
C. De Filippis
\thanks{Dipartimento SMFI, Universitá di Parma.
{\footnotesize \href{mailto:cristiana.defilippis@unipr.it}{cristiana.defilippis@unipr.it}.}}
\and
T. Kuusi
\thanks{Department of Mathematics and Statistics, University of Helsinki.
{\footnotesize \href{mailto:tuomo.kuusi@helsinki.fi}{tuomo.kuusi@helsinki.fi}.}}
\and
G. Mingione
\thanks{Dipartimento SMFI, Universitá di Parma.
{\footnotesize \href{mailto:giuseppe.mingione@unipr.it}{giuseppe.mingione@unipr.it}.}}
}
\date{January 9, 2026 }
\begin{document}

\maketitle

\begin{abstract}
  We prove local boundedness and a Harnack inequality for nonnegative weak solutions of the equation $-\nabla\cdot(\a(x)\nabla u)=0$ under a coarse-grained ellipticity assumption on the symmetric coefficient field~$\a$. Coarse-grained ellipticity is a scale-dependent condition, defined for fields with only $\a,\a^{-1}\in L^1$, in terms of families of effective diffusion matrices on triadic cubes of all sizes, and our estimates depend quantitatively on a corresponding coarse-grained ellipticity ratio. We show that coarse-grained ellipticity can be enforced by purely negative Sobolev regularity hypotheses: if $\a\in L^1\cap W^{-s,p}(U)$ and $\a^{-1}\in L^1\cap W^{-t,q}(U)$ for exponents $p,q\in[1,\infty]$ and $s,t\in[0,1)$ satisfying $s<1-\nicefrac1p$, $t<1-\nicefrac1q$ and
  \begin{equation*}
    \frac{s+t}{2} + \frac{d}{2}\Bigl(\frac1p+\frac1q\Bigr) < 1,
  \end{equation*}
  then $\a$ is coarse-grained elliptic in~$U$ and every nonnegative solution satisfies a quantitative unit-scale Harnack inequality. In particular, when $s=t=0$ we recover Trudinger's classical result under the integrability condition $\a\in L^p$, $\a^{-1}\in L^q$ with $1/p+1/q<2/d$, and we obtain the sharp scaling of the Harnack constant in terms of $\|\a\|_{L^p}$ and $\|\a^{-1}\|_{L^q}$. More importantly, our criteria apply to new classes of degenerate and singular coefficient fields for which $\a,\a^{-1}\notin L^{1+\delta}$ for all $\delta>0$, including examples generated by singular fractal measures and Gaussian multiplicative chaos, beyond the reach of previous approaches based solely on integrability assumptions.
\end{abstract}

\tableofcontents

\section{Introduction}

\subsection{Main results}
We study regularity properties of solutions of linear elliptic equations
\begin{equation}
  -\nabla \cdot \a \nabla u = 0
  \qquad \text{in} \ U \subseteq \Rd
  \,,
  \label{e.pde}
\end{equation}
under the assumption that the symmetric coefficient field~$\a(x)$ is \emph{coarse-grained elliptic}. This coarse-graining framework associates, to each coefficient field, a family of scale-dependent, effective diffusion matrices and ellipticity constants which encode the behavior of the solutions (and subsolutions) at different scales. This new notion of ellipticity was introduced in~\cite{AK.HC} as a tool for analyzing large-scale properties of equations with random coefficients. Here we show that a suitable modification of the same structure controls the deterministic behavior of solutions.

\smallskip

We present two quantitative, scale-dependent estimates: a local boundedness principle for weak subsolutions of~\eqref{e.pde}, and a Harnack inequality for nonnegative weak solutions. These results generalize, for symmetric coefficient fields, the classical results of De Giorgi~\cite{DeGiorgi}, Nash~\cite{Nash} and Moser~\cite{Moser1,Moser2} in the uniformly elliptic setting.

\smallskip

The definitions of coarse-grained ellipticity (Definition~\ref{def.cg.ellipticity}), the weighted Sobolev space~$H^1_\a$ (Section~\ref{ss.sobolev.spaces}), and the coarse-grained ellipticity ratio~$\Theta_{s,t}(\cu_0\,;\baconsub)$ (see~\eqref{e.cg.ellipticity.ratio}) can be found in Section~\ref{s.prelims}, below.

\begin{theorem}[Local boundedness principle]
  \label{t.local.boundedness.intro}
  Suppose that~$\a$ is a coarse-grained elliptic coefficient field on the unit cube~$\cu_0 \coloneqq (-\frac12,\frac12)^d$.
  Assume~$s,t \in (0,1)$ with $s+t < 1$. Denote~$\sigma \coloneqq 1-s-t>0$.
  There exists a constant $C(s,t,d)<\infty$ such that, for every~$u\in H^1_\a(\cu_0)$ satisfying
  \begin{equation}
    -\nabla \cdot \a \nabla u \leq  0 \quad \mbox{in} \ \cu_0
    \,,
  \end{equation}
  we have the estimate
  \begin{equation}
    \label{e.local.boundedness.intro}
    \sup_{\frac{1}{2}\cu_0} u
    \leq
    C \Theta_{s,t}^{\nicefrac{d}{4 \sigma}} (\cu_0\,;\baconsub)
    \|u_+\|_{L^2(\cu_0)}\,.
  \end{equation}
\end{theorem}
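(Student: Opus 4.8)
\emph{Strategy.} The plan is to run a Moser iteration on a fixed cube in which the only field-dependent input is a coarse-grained replacement of the Caccioppoli--Sobolev estimate, with constant controlled entirely through the effective matrices $\baconsub(\cu)$ and hence through $\Theta_{s,t}(\cu_0\,;\baconsub)$. After multiplying $\a$ by a positive scalar --- which leaves~\eqref{e.pde} and every hypothesis invariant --- one may normalize the unit-scale effective matrix, and after scaling $u$ one may assume $\|u_+\|_{L^2(\cu_0)}=1$. Set $p_k\coloneqq 2\chi^k$ and $\cu^{(k)}\coloneqq(1+2^{-k})\tfrac12\cu_0$, so that $\cu^{(0)}=\cu_0$, the $\cu^{(k)}$ decrease to $\tfrac12\cu_0$, and all of them are comparable to the unit cube; here $\chi=\chi(d,\sigma)>1$ is the gain exponent coming from the coarse-grained Sobolev inequality, which one expects to equal $\tfrac{d}{d-2\sigma}$, exactly the value giving the target power $\tfrac{d}{4\sigma}$.

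\emph{The two ingredients.} First, a Caccioppoli inequality for subsolutions: testing $-\nabla\cdot\a\nabla u\le 0$ against $\varphi=\zeta^2 u_+^{2\beta-1}$ with $\beta\ge 1$ and $\zeta$ a cutoff between $\cu'$ and a slightly larger cube $\cu$, and writing $w\coloneqq u_+^{\beta}$, gives
\begin{equation*}
  \fint_{\cu'}\a\,\nabla w\cdot\nabla w\;\lesssim\;\fint_{\cu}\a\,w^2\,|\nabla\zeta|^2\,,
\end{equation*}
with implicit constant bounded uniformly for $\beta\ge1$; this uses only $\a\in L^1$ and $u\in H^1_\a$ (the power $u_+^{2\beta-1}$ is handled by truncating $u_+$ at level $M$ and letting $M\to\infty$). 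Second, the coarse-grained Sobolev--Poincar\'e inequality on a triadic cube $\cu$, assembled in Section~\ref{s.prelims} from Definition~\ref{def.cg.ellipticity} and the effective matrices, of the form
\begin{equation*}
  \Bigl(\fint_{\cu}\bigl|g-(g)_{\cu}\bigr|^{2\chi}\Bigr)^{1/\chi}\;\lesssim\;\Theta_{s,t}(\cu\,;\baconsub)\,r(\cu)^2\fint_{\cu}\a\,\nabla g\cdot\nabla g\,,
\end{equation*}
in which the genuine degeneracy of $\a$ is absorbed into $\Theta_{s,t}$ and the ``dimension gap'' is $2\sigma$ rather than $2$. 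Since $\a$ need not be bounded above, the right-hand side of Caccioppoli cannot be dominated by $\fint_\cu w^2|\nabla\zeta|^2$ directly; it must be routed through the \emph{upper} effective bound attached to $\baconsub(\cu)$ --- a coarse-grained analogue of Trudinger's interpolation --- so that this upper bound and the \emph{lower} effective bound used to turn $\fint_\cu\a\,\nabla w\cdot\nabla w$ into honest integrability of $w$ recombine, by the weighting convention of~\eqref{e.cg.ellipticity.ratio}, into the single factor $\Theta_{s,t}(\cu_0\,;\baconsub)$.

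\emph{The iteration.} Applying the combined estimate on $\cu^{(k)}\supset\cu^{(k+1)}$ with $\beta=\chi^k$, so $w=u_+^{\chi^k}$ and $|\nabla\zeta|^2\sim(r_k-r_{k+1})^{-2}\sim 4^k$, yields a recursion
\begin{equation*}
  \Bigl(\fint_{\cu^{(k+1)}}u_+^{\,p_{k+1}}\Bigr)^{1/p_{k+1}}\;\le\;\bigl(C\,4^k\,\Theta_{s,t}(\cu_0\,;\baconsub)\bigr)^{1/p_k}\Bigl(\fint_{\cu^{(k)}}u_+^{\,p_{k}}\Bigr)^{1/p_{k}}\,.
\end{equation*}
Iterating over $k\ge0$ and passing to the limit (using $\bigcap_k\cu^{(k)}=\tfrac12\cu_0$ and $\|\cdot\|_{L^{p_k}}\to\|\cdot\|_{L^\infty}$) gives $\sup_{\frac12\cu_0}u_+\le C(s,t,d)\,\Theta_{s,t}(\cu_0\,;\baconsub)^{\sum_{k\ge0}1/p_k}\,\|u_+\|_{L^2(\cu_0)}$, since the accumulated prefactor $\prod_{k\ge0}(C\,4^k)^{1/p_k}$ converges ($\sum_k k\chi^{-k}<\infty$), and
\begin{equation*}
  \sum_{k\ge0}\frac1{p_k}=\frac12\sum_{k\ge0}\chi^{-k}=\frac12\cdot\frac{\chi}{\chi-1}=\frac{d}{4\sigma}\,.
\end{equation*}
The hypothesis $\sigma=1-s-t>0$ is used exactly here, to ensure $\chi>1$ --- equivalently, that the scale weights in $\Theta_{s,t}$ dominate the growth of the effective ellipticity as the scale shrinks --- so that this geometric series is finite.

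\emph{The main obstacle} is not the Moser bookkeeping but the construction and exploitation of the coarse-grained Caccioppoli--Sobolev input: one must simultaneously invoke the lower effective bound of $\baconsub(\cu)$ (to convert the $\a$-weighted energy into genuine integrability of the power $w$) and the upper effective bound (to absorb $\fint_\cu\a\,w^2|\nabla\zeta|^2$), and then verify that the two scale-dependent weights recombine, after the geometric summation, into a bounded total power of $\Theta_{s,t}$. This is the point at which the argument genuinely departs from the uniformly elliptic case, where both bounds are simply the fixed ellipticity constants; here each can blow up as the scale decreases, and it is precisely the weighting in~\eqref{e.cg.ellipticity.ratio} together with $\sigma>0$ that keeps their product finite. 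A comparatively routine point is the admissibility of the test functions $\zeta^2 u_+^{2\beta-1}$ in $H^1_\a(\cu_0)$, dispatched by the standard truncation-and-monotone-convergence argument for merely integrable coefficients.
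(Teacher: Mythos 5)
Your scheme is Moser iteration, whereas the paper proves this statement (Theorem~\ref{t.local.bound}) by De Giorgi iteration on levels $k_m$ with a Chebyshev measure estimate; the paper does run a Moser iteration elsewhere (Proposition~\ref{p.moser.iteration.all.p}) and notes in the remark following it that taking $p=2$ there reproduces the same power $\Theta^{\nicefrac{d}{4\sigma}}$, so your choice of iteration scheme is a legitimate alternative. But two things in your outline are substantive gaps, not merely differences of route.

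First, the coarse-grained Caccioppoli inequality you write down, $\fint_{\cu'}\a\nabla w\cdot\nabla w\lesssim\fint_\cu\a\,w^2|\nabla\zeta|^2$, is not what is available and is not what the paper proves. Proposition~\ref{p.caccioppoli} gives the estimate with the \emph{unweighted} norm $\|\Psi(u)\|_{\underline{L}^2}$ on the right, at the price of a prefactor $\Lambda_s^{\nicefrac12}$ times a power of $\Theta_{s,t}$. Obtaining this form is the technical core of the paper: the cross term $\fint\a\nabla\Psi(u)\cdot\Psi(u)\nabla\varphi$ cannot be absorbed by Cauchy--Schwarz since $\a$ is unbounded, and the paper instead pairs $\a\nabla\Psi(u)\in\BesovDualSum{2}{1}{-s}$ against $\Psi(u)\nabla\varphi\in\Besov{2}{\infty}{s}$ on a carefully optimized mesoscopic scale $3^{-h}$, invoking Lemmas~\ref{l.besov.poincare} and~\ref{l.besov.fluxes.easy} and the scaling bound~\eqref{e.Theta.scaling}. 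Your paragraph labelled ``the main obstacle'' correctly names this as the crux but only gestures at it; as written, the proposal leaves the key lemma unproven. Admissibility of the test function is likewise not purely a truncation argument here: Lemma~\ref{l.product.rule} itself uses Besov duality, via~\eqref{e.besov.convergence}, to pass to the limit in the cross term.

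Second, your one-step recursion carries $\Theta^{\nicefrac{1}{p_k}}$ per step with gain $\chi=\nicefrac{d}{(d-2\sigma)}$. The paper's one-step inequality~\eqref{e.moser.step} actually has gain $\kappa_t=\nicefrac{d}{(d-2(1-t))}$ (coming from the Sobolev--Poincar\'e inequality with the $\lambda_t$-weight) and prefactor $\Theta^{\nicefrac{(s+\sigma)}{\sigma}}$ (from combining the $\Lambda_s$-weighted Caccioppoli with $\lambda_t$-weighted Sobolev--Poincar\'e). Your two deviations cancel -- $\frac{1-t}{\sigma}\cdot\frac{d}{4(1-t)}=\frac{d}{4\sigma}$ equals $1\cdot\frac{d}{4\sigma}$ -- so the final exponent is right, but the intermediate claim that the Sobolev step alone already sees the ``dimension gap $2\sigma$'' with a $\Theta^1$ constant is not supported by anything in Sections~\ref{ss.functional}--\ref{ss.cg.caccioppoli}, and a correct proof would have to track the $s$ and $t$ weights separately as the paper does.
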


\begin{theorem}[Harnack inequality]
  \label{t.Harnack.intro}
  Suppose that~$\a$ is a coarse-grained elliptic coefficient field on~$\cu_0$. Assume~$s,t \in (0,1)$ with~$s+t<1$. There exists a constant~$C = C(s,t,d) <\infty$ such that every nonnegative weak solution~$u\in H^1_\a(\cu_0)$ of
  \begin{equation}
    -\nabla \cdot \a \nabla u = 0 \quad \mbox{in} \ \cu_0
    \,,
  \end{equation}
  satisfies the estimate
  \begin{equation}
    \label{e.Harnack.intro}
    \sup_{\frac{1}{8}\cu_0} u
    \leq
    \exp\bigl(C\Theta_{s,t}^{\nicefrac{1}{2}}(\cu_0\,;\baconsub)
    \bigr) \inf_{\frac{1}{8}\cu_0} u
    \,.
  \end{equation}
\end{theorem}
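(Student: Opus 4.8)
The plan is to run Moser's iteration in the coarse-grained setting: Theorem~\ref{t.local.boundedness.intro} already supplies the two one-sided (``$\sup$'' and ``$\inf$'') bounds, so the real task is to produce the missing ``weak Harnack'' crossover, which I would get from a logarithmic energy estimate together with a John--Nirenberg inequality built on the triadic structure. To begin, since $u$ is a nonnegative solution, I would first replace $u$ by $u+\delta$ (still a solution of~\eqref{e.pde}) so that, for $\delta>0$, $v\coloneqq 1/u\in H^1_\a(\cu_0)$; the Harnack constant below will be $\delta$-independent, so one lets $\delta\downarrow 0$ at the end. A direct computation gives $-\nabla\cdot\a\nabla v=-2u^{-3}\,\nabla u\cdot\a\nabla u\le 0$, so $v$ is a weak subsolution, and applying Theorem~\ref{t.local.boundedness.intro} to $u$ and to $v$ yields
\begin{equation*}
  \sup_{\frac12\cu_0}u\le C\,\Theta_{s,t}^{\nicefrac{d}{4\sigma}}(\cu_0\,;\baconsub)\,\|u\|_{L^2(\cu_0)}
  \qquad\text{and}\qquad
  \inf_{\frac12\cu_0}u\ge c\,\Theta_{s,t}^{-\nicefrac{d}{4\sigma}}(\cu_0\,;\baconsub)\,\|u^{-1}\|_{L^2(\cu_0)}^{-1}\,.
\end{equation*}
This controls $\sup u/\inf u$ by $\|u\|_{L^2}\|u^{-1}\|_{L^2}$ up to a polynomial factor in $\Theta_{s,t}$, but that product is not bounded by a dimensional constant once $\Theta_{s,t}$ is large; the fix is to replace the $L^2$ norms by $L^{p_0}$ norms for a small exponent $p_0$ chosen below, which forces one to re-run the iteration down to small exponents on nested cubes $\frac18\cu_0\subset\frac14\cu_0\subset\frac12\cu_0\subset\cu_0$.

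The second step is therefore to extend the local boundedness estimate to all exponents $p\in(0,2]$. Feeding the Caccioppoli and coarse-grained Sobolev inequalities that underlie Theorem~\ref{t.local.boundedness.intro} into the test functions $\zeta^2w_+^{2\beta-1}$ and iterating over the exponents $p_k=\chi^kp$ (with $\chi=\chi(d,s,t)>1$ the fixed integrability gain of the coarse-grained Sobolev inequality) should give, for every subsolution $w$ and every $p\in(0,2]$,
\begin{equation*}
  \sup_{\frac14\cu_0}w\le C(d,s,t)\,p^{-\nicefrac{N}{p}}\,\Theta_{s,t}^{\nicefrac{d}{4\sigma}}(\cu_0\,;\baconsub)\,\|w_+\|_{L^{p}(\frac12\cu_0)}\,,\qquad N=N(d,s,t)\,.
\end{equation*}
The quantitative point to watch is the explicit dependence on $p$: because the coarse-grained Sobolev constant at each triadic scale is controlled by a power of $\Theta_{s,t}$ and $\Theta_{s,t}\ge1$, the polynomial-in-$\Theta_{s,t}$ and $\log\Theta_{s,t}$ factors accumulated over the $O(\log(1/p))$ steps needed to pass from $L^p$ up to $L^2$ can be absorbed, for the choice $p=p_0\asymp\Theta_{s,t}^{-\nicefrac12}$, into $\exp(C\Theta_{s,t}^{\nicefrac12})$; here it is cleaner to package the bookkeeping through a Bombieri--Giusti-type lemma rather than a bare geometric sum. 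Using this for $w=u$ and $w=1/u$ gives
\begin{equation*}
  \sup_{\frac14\cu_0}u\le \exp\!\bigl(C\Theta_{s,t}^{\nicefrac12}(\cu_0\,;\baconsub)\bigr)\Bigl(\fint_{\frac12\cu_0}u^{p_0}\Bigr)^{\!\nicefrac{1}{p_0}}
  \quad\text{and}\quad
  \inf_{\frac14\cu_0}u\ge \exp\!\bigl(-C\Theta_{s,t}^{\nicefrac12}(\cu_0\,;\baconsub)\bigr)\Bigl(\fint_{\frac12\cu_0}u^{-p_0}\Bigr)^{\!-\nicefrac{1}{p_0}}\,.
\end{equation*}

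The third step is the crossover. Testing the equation with $\zeta^2/u$ gives the logarithmic energy estimate $\int\zeta^2\,\nabla\log u\cdot\a\nabla\log u\le 4\int\nabla\zeta\cdot\a\nabla\zeta$, and feeding this into the coarse-grained Poincaré inequality on triadic cubes — the same mechanism that turns energy into oscillation in the proof of Theorem~\ref{t.local.boundedness.intro} — should show that $w\coloneqq\log u$ has finite coarse-grained mean oscillation on $\cu_0$, with seminorm at most $C(d,s,t)\,\Theta_{s,t}^{\nicefrac12}(\cu_0\,;\baconsub)$; the square root is the natural scaling, since $w$ is controlled in the energy norm, which depends linearly on $\a$. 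A John--Nirenberg inequality on the triadic tree then produces a threshold exponent $p_0\asymp\Theta_{s,t}^{-\nicefrac12}$ (matching the choice above) and a dimensional constant such that $\bigl(\fint_{\frac12\cu_0}u^{p_0}\bigr)\bigl(\fint_{\frac12\cu_0}u^{-p_0}\bigr)\le C(d)$. Chaining the three displays through $\frac18\cu_0\subset\frac14\cu_0$ and using $\Theta_{s,t}\ge1$ once more yields
\begin{equation*}
  \sup_{\frac18\cu_0}u\le \exp\!\bigl(C\Theta_{s,t}^{\nicefrac12}\bigr)\Bigl(\fint_{\frac12\cu_0}u^{p_0}\Bigr)^{\!\nicefrac{1}{p_0}}
  \le \exp\!\bigl(C\Theta_{s,t}^{\nicefrac12}\bigr)\,C(d)^{\nicefrac{1}{p_0}}\Bigl(\fint_{\frac12\cu_0}u^{-p_0}\Bigr)^{\!-\nicefrac{1}{p_0}}
  \le \exp\!\bigl(C\Theta_{s,t}^{\nicefrac12}(\cu_0\,;\baconsub)\bigr)\inf_{\frac18\cu_0}u\,,
\end{equation*}
which is~\eqref{e.Harnack.intro}.

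The genuinely new work lies in the second and third steps: rebuilding the Caccioppoli, coarse-grained Sobolev and Poincaré estimates and the John--Nirenberg inequality in a setting where $\a,\a^{-1}\in L^1$ only, so that no classical local estimate survives, and carrying every constant against $\Theta_{s,t}$ through the sum over triadic scales — which converges exactly because $s+t<1$. I expect the main obstacle to be the exponent bookkeeping in the second step: ensuring that the iteration down to $p_0\asymp\Theta_{s,t}^{-\nicefrac12}$ produces the constant $\exp(C\Theta_{s,t}^{\nicefrac12})$ with the stated power $\tfrac12$ (rather than, say, $\Theta_{s,t}^{\nicefrac12}\log\Theta_{s,t}$ or a larger power), which is where a Bombieri--Giusti-type packaging of the interpolation, in place of a naive step-by-step iteration, becomes essential.
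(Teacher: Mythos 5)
Your high-level strategy — two one-sided Moser iterations down to a small exponent $p_0$, plus a logarithmic crossover at $p=0$ — is the correct and paper-matching plan. Both gaps are in Step~3, and the second is the more serious.

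First, the logarithmic Caccioppoli as you state it does not deliver the constant you need. Testing with $\zeta^2/u$ and absorbing by Cauchy--Schwarz gives $\int\zeta^2\,\nabla\log u\cdot\a\nabla\log u\le 4\int\nabla\zeta\cdot\a\nabla\zeta$, whose right-hand side is bounded by $\|\nabla\zeta\|_{L^\infty}^2\|\a\|_{L^1(\cu_0)}$. That quantity is \emph{not} controlled by $\Lambda_s(\cu_0;\baconsub)$ (nor $\baconpow$): by~\eqref{e.a.norm.bounds}, $\|\a(\cu;\bacon)\|\le\|(\a)_\cu\|$ with possibly strict inequality, and for matrix-valued fields $\|(\a)_{\cu_0}\|$ can be much smaller than $\|\a\|_{\underline L^1(\cu_0)}$. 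So this route would produce a Harnack constant in terms of $\|\a\|_{L^1}/\lambda_t$, not $\Theta_{s,t}$. The paper (Proposition~\ref{c.log.caccioppoli}) avoids this by testing with the \emph{linear} cutoff $\varphi/u$, so that the cross term $\fint\a\nabla\log u\cdot\nabla\varphi$ is a single flux average; this is then bounded via the Besov duality and Lemma~\ref{l.besov.fluxes.easy}, which is exactly where $\Lambda_s(\cu_0;\baconpow)$ enters. Without that duality step you cannot reach the $\exp(C\Theta_{s,t}^{1/2})$ constant.

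Second, and more fundamentally, the John--Nirenberg crossover you envision is not available in this setting. A John--Nirenberg (or triadic-martingale John--Nirenberg) argument giving $\bigl(\fint u^{p_0}\bigr)\bigl(\fint u^{-p_0}\bigr)\le C(d)$ requires a scale-invariant oscillation bound for $\log u$ over \emph{all} subcubes. But the coarse-grained Poincar\'e and Caccioppoli constants on a subcube $z+\cu_k$ degrade as $3^{(s+t)|k|}$ by~\eqref{e.ellipticity.scales}--\eqref{e.Theta.scaling}, so the BMO seminorm of $\log u$ over small cubes blows up; this is exactly the reason (noted in the introduction) that the estimates here are intrinsically unit-scale. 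The paper's crossover (Lemma~\ref{l.crossover}) sidesteps this entirely by applying Bombieri's lemma (Lemma~\ref{l.bombieri}), which needs only (i) the unit-scale bound $\|\log w\|_{L^1(\frac12\cu_0)}\le 1$ from Proposition~\ref{c.log.caccioppoli} with $p_*\asymp t\Theta_{s,t}^{-1/2}$, and (ii) the reverse H\"older chain on cubes between $\frac14\cu_0$ and $\frac12\cu_0$ from~\eqref{e.small.p}. Both inputs are purely unit-scale, which is what makes Bombieri--Giusti essential here rather than merely cleaner bookkeeping — it is the replacement for John--Nirenberg when no scale-invariant oscillation estimate exists.
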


Observe that the constant in~\eqref{e.Harnack.intro} is explicit in its dependence on the coarse-grained ellipticity ratio of the field~$\a$. Specializing to uniformly elliptic coefficients,~$\Theta_{s,t}(\cu_0\,;\baconsub)$ is bounded from above by the usual ratio of uniform ellipticity constants and thus~\eqref{e.Harnack.intro} reduces to the classical estimate of Bombieri and Giusti~\cite{BG} which is known to be optimal (see Remark~\ref{rem.optimal}).

\smallskip

The coarse-grained  matrices and ellipticity constants are defined for symmetric, positive semi-definite, coefficient fields~$\a$ satisfying merely~$\a\in L^1$ and~$\a^{-1} \in L^1$. Such a field is \emph{coarse-grained elliptic} if its coarse-grained ellipticity constants are positive and finite. A simple sufficient condition for coarse-grained ellipticity, and thus for the validity of our Harnack inequality, can be formulated in terms of the \emph{negative Sobolev regularity} of the coefficient field.
As we will show, for every~$p,q\in [1,\infty]$ and~$s,t\in [0,1)$ satisfying $s < 1 - \nicefrac1p$,~$t < 1 - \nicefrac1q$ and
\begin{equation}
  \frac{s+t}{2} + \frac{d}{2}\biggl(\frac{1}{p}+\frac{1}{q}\biggr ) < 1
  \label{e.Sobolev.exponent.condition}
\end{equation}
we have the implication
\begin{equation}
  \a \in (L^1 \cap W^{-s,p}) (U)
  \  \ \&  \  \
  \a^{-1} \in (L^1 \cap W^{-t,q}) (U)
  \  \implies \
  \text{ $\a$ is coarse-grained elliptic in $U$\,,}
  \label{e.Sobolev.ellipticity.condition}
\end{equation}
Here~$W^{-s,p}(U)$ denotes the dual space of the fractional Sobolev space~$W^{s,p'}(U)$ (see Section~\ref{ss.notation}). The implication~\eqref{e.Sobolev.ellipticity.condition} is also quantitative: the coarse-grained ellipticity constants can be controlled explicitly by the negative Sobolev norms of the field. As far as we are aware, Theorem~\ref{t.Harnack.intro} is the first instance in which a \emph{negative} regularity assumption on the coefficient field is used as a structural hypothesis to obtain De Giorgi-Nash-Moser type estimates.

\smallskip

Specializing~\eqref{e.Sobolev.exponent.condition},~\eqref{e.Sobolev.ellipticity.condition} to~$s=t=0$, the classical integrability condition emerges:
\begin{equation}
  \a \in L^p \ \ \& \ \
  \a^{-1}\in L^q
  \quad \mbox{with} \quad \frac1p + \frac1q < \frac2d
  \  \implies \
  \text{ $\a$ is coarse-grained elliptic in $U$.}
  \label{e.classical.integrability}
\end{equation}
Therefore Theorem~\ref{t.Harnack.intro} yields, as a special case, a large-scale Harnack inequality for coefficient fields satisfying~\eqref{e.classical.integrability}, recovering the classical result of Trudinger~\cite{Trudinger}. Moreover, we obtain the sharp scaling of the Harnack constant in terms of~$\| \a \|_{L^p(\cu_0)}$ and~$\| \a^{-1} \|_{L^q(\cu_0)}$, given by
\begin{equation}
  \exp\bigl( C \| \a \|_{L^p(\cu_0)}^{\nicefrac12}  \| \a^{-1} \|_{L^q(\cu_0)}^{\nicefrac12} \bigr)
  \,,
  \label{e.Harnack.constant.ss}
\end{equation}
where~$C(d,p,q)<\infty$. This scaling of the constant is optimal (see Remark~\ref{rem.optimal}) and appears to be new in the setting of~\eqref{e.classical.integrability}, improving on the exponential dependence in~$\| \a \|_{L^p(\cu_0)} \| \a^{-1} \|_{L^q(\cu_0)}$ obtained in~\cite{BS}.

\smallskip

The range of integrability exponents~$p$ and~$q$ covered by~\eqref{e.classical.integrability} is not optimal. The sharp result, proved by Bella and Sch\"affner~\cite{BS}, asserts that local boundedness and Harnack inequalities are valid under the strictly weaker condition
\begin{equation}
  \a \in L^p \quad \& \quad
  \a^{-1}\in L^q \,,
  \qquad \frac1p + \frac1q < \frac2{d-1}
  \,.
  \label{e.BellaSchaffner}
\end{equation}
The condition~\eqref{e.BellaSchaffner} is known to be optimal in dimensions~$d\geq 4$, as was demonstrated by explicit example in~\cite{FSS}.
Our approach does not recover this full range of integrability exponents: if we specialize Theorem~\ref{t.Harnack.intro} to fields satisfying~$\a\in L^p(\cu_0)$ and~$\a^{-1} \in L^q(\cu_0)$ then its applicability is limited to~$p$ and~$q$ satisfying~\eqref{e.classical.integrability}.
Within the class of coefficient fields satisfying~$\a\in L^p$ and~$\a^{-1}\in L^q$, the results of~\cite{BS} are therefore stronger.

\smallskip

The contribution of the present paper lies in a different direction. The negative Sobolev regularity assumption in~\eqref{e.Sobolev.ellipticity.condition} allows us to treat coefficient fields that are substantially more singular or degenerate than those covered by an integrability condition such as~\eqref{e.classical.integrability}. Indeed, one can construct coefficient fields satisfying~\eqref{e.Sobolev.ellipticity.condition} for which~$\a,\a^{-1}\notin L^{1+\delta}$ for every~$\delta>0$; see Appendix~\ref{app.examples} for several explicit examples. The key point is that the large-scale regularity of solutions is governed by coarse-grained ellipticity, and the latter can be controlled in terms of \emph{scale-discounted spatial averages} of~$\a$ and~$\a^{-1}$, rather than in terms of strong norms of these fields.

\smallskip

Like the results of Trudinger~\cite{Trudinger} and Bella--Sch\"affner~\cite{BS}, our estimates are intrinsically at unit scale. The coarse-grained ellipticity ratios associated with smaller cubes may grow like a power of the inverse of the size, and thus the Harnack constant may deteriorate rapidly on smaller scales. In particular, an iteration of the estimate does not, in general, yield a continuity estimate. This is in contrast with degenerate ellipticity hypotheses based on Muckenhoupt weights, such as the $A_2$ condition, under which one imposes a scale-invariant weighted Poincar\'e inequality and obtains H\"older continuity and scale-invariant Harnack inequalities for solutions; see, for example, Fabes, Kenig and Serapioni~\cite{FKS}. In some applications, this is not an obstacle, since large-scale regularity---rather than pointwise regularity at all scales---is the relevant notion; the quintessential example being homogenization theory. Indeed, coarse-grained ellipticity provides a natural structural condition in that context, as has been already demonstrated in~\cite{AK.HC} and a very recent work of Lau~\cite{Lau} proves a general abstract, qualitative homogenization statement under a coarse-grained ellipticity assumption.

\subsection{Overview of the proofs}

In Section~\ref{s.prelims} we associate to each triadic cube~$\cu \subseteq\cu_0$ and certain (possibly nonconvex) cones~$\bacon(\cu) \subseteq H^1_\a(\cu)$ a pair of
effective diffusion operators~$\a(\cu;\bacon)$ and~$\a_*(\cu)$ obtained from variational formulas over~$\bacon$. Here~$\a_*(\cu)$ is the usual coarse-grained matrix used in~\cite{AKM.Book,AK.HC}, and~$\a(\cu;\bacon)$ is a generalization of the coarse-grained matrix~$\a(\cu)$, coinciding with it only in the case~$\bacon$ is the linear space of solutions.
These operators quantify how the energy~$\int_\cu \nabla u \cdot \a\nabla u$ controls the spatially-averaged gradients and fluxes of elements of~$\bacon$ on~$\cu$. The coarse-grained ellipticity constants~$\Lambda_{s}(\cu_0 \,; \bacon)$ and~$\lambda_{s}(\cu_0)$ are defined in~\eqref{eq:besov-norms} by combining~$\|\a(\cu)\|$ and $|\a_*(\cu)^{-1}|$ over all triadic subcubes~$\cu\subseteq\cu_0$ with scale-discounted weights (the exponent~$s\in [0,1)$ determines the strength of the discount). Our arguments use only the finiteness of these quantities. The coarse-grained ellipticity ratio~$\Theta_{s,t}(\cu_0\,;\bacon)$ is defined in~\eqref{e.cg.ellipticity.ratio} as the ratio of~$\Lambda_{s}(\cu_0 \,; \bacon)$ and~$\lambda_{s}(\cu_0)$.

\smallskip

The main analytic input of the paper is a collection of functional and elliptic inequalities which depend on the field~$\a$ only via the coarse-grained ellipticity constants. In Section~\ref{ss.functional} we prove a \emph{coarse-grained Sobolev--Poincar\'e} inequality at unit scale: for $u\in H^1_\a(\cu_0)$, the energy~$\int_{\cu_0}\a\nabla u\cdot\nabla u$ controls a fractional Sobolev norm (or an $L^{2^*}$-norm)
of $u-(u)_{\cu_0}$, with a constant depending explicitly on~$\lambda_{t}(\cu_0)$. This is written as an estimate of weak norms of gradients of~$u$, and we also obtain analogous estimates on weak norms of fluxes of elements of~$\bacon$ in terms of~$\Lambda_{s}(\cu_0\,;\bacon)$.  In Section~\ref{ss.cg.caccioppoli}, we establish a \emph{coarse-grained Caccioppoli inequality} for convex transforms $\Psi(v)$ of~$v\in \bacon$, for cones~$\bacon$ which allow~$v$ to be one~$(u-k)_+$,~$u^{p/2}$, and~$\log u$,  for a solution~$u$. Combining these two ingredients yields reverse H\"older inequalities for truncations and powers of solutions, and a logarithmic Caccioppoli estimate for~$\log u$, all with constants expressed explicitly in terms of the coarse-grained ellipticity ratio. This provides a complete coarse-grained version of the De Giorgi--Moser toolkit.

\smallskip

Once these coarse-grained inequalities are available, the proofs of Theorems~\ref{t.local.boundedness.intro} and~\ref{t.Harnack.intro} follow the classical De Giorgi, Moser and Bombieri--Giusti schemes. The reverse H\"older inequality for truncations yields, via a standard De Giorgi iteration, the local boundedness estimate~\eqref{e.local.boundedness.intro} for subsolutions. The reverse H\"older inequality for powers is used to perform Moser iterations for positive and negative exponents, giving~$L^p$-to-$L^\infty$ bounds for solutions from above and below. The logarithmic estimate, together with a Bombieri--Giusti crossover lemma, compares positive and negative moments of $u$ and leads to the unit-scale Harnack inequality~\eqref{e.Harnack.intro} with the constant~$\exp\big(C\Theta_{s,t}^{\nicefrac12}\big)$. The structure of the argument is thus entirely classical---what is new is that every step relies only on  coarse-grained ellipticity.

\subsection{Examples}

In Appendix~\ref{app.examples} we collect several examples of coefficient fields~$\a(x)$ satisfying the negative Sobolev assumption~\eqref{e.Sobolev.ellipticity.condition} but no~$L^{1+\delta}$ bound for either~$\a$ or~$\a^{-1}$. These examples are not pathological. One class is obtained by regularizing singular measures~$\mu_F$ supported on Ahlfors regular sets~$F$ of Hausdorff dimension belonging to the interval~$(d-2,d)$ by setting~$\mu_F^\delta = \mu_F \ast \eta_\delta$, where~$\{ \eta_\delta\}$ is the standard mollifier.  Setting~$\a_\delta = 1 + \mu_F^\delta$ then gives us a coefficient field satisfying our assumptions (see Appendix~\ref{ss.singular.measure}). Such fields arise naturally as effective densities or conductivities concentrated on lower-dimensional structures.

\smallskip

A second example, presented in Appendix~\ref{ss.GMC}, is provided by a suitable modification of Gaussian multiplicative chaos, where we take a log-correlated Gaussian field, construct the associated multiplicative chaos measure, mollify at scale~$\delta$ and add a constant. These log-normal weights are standard in models of turbulent transport and random media. For both families the~$L^p$-norms diverge for every~$p>1$, whereas the negative Sobolev and coarse-grained ellipticity bounds remain uniformly controlled, so the estimates of Theorems~\ref{t.local.boundedness.intro} and~\ref{t.Harnack.intro} apply but previous results based on the integrability conditions~\eqref{e.classical.integrability} or~\eqref{e.BellaSchaffner} do not.

\section{Coarse-grained ellipticity}
\label{s.prelims}

In this section we introduce our notation and define coarse-grained ellipticity.

\subsection{Notation}\label{ss.notation}
We work throughout in spatial dimension~$d \geq 2$. For a measurable set
$U \subseteq \Rd$ with positive Lebesgue measure~$|U|>0$, we denote its
average integral and volume-normalized $L^p$-norm by
\begin{equation*}
  (f)_U \coloneqq \fint_U f(x) \, dx \coloneqq \frac{1}{|U|} \int_U f(x) \, dx,
  \qquad
  \|f\|_{\underline{L}^p(U)} \coloneqq
  \biggl(
  \fint_U |f(x)|^p \, dx
  \biggr)^{\!\nicefrac{1}{p}},
  \quad p \in [1,\infty).
\end{equation*}
Given a finite index set $I$, we write for the average of a family of scalars $\{f_i\}_{i\in I}$
\begin{equation*}
  \avsum_{i\in I} f_i \coloneqq
  \frac{1}{|I|} \sum_{i\in I} f_i\,.
\end{equation*}

For $s \in (0,1)$ and $p \in [1,\infty)$, we use the volume-normalized
fractional Sobolev space $\Wul{s}{p}(U)$ consisting of all
$f \in L^p(U)$ such that
\begin{equation*}
  [f]_{\Wul{s}{p}(U)} \coloneqq
  \biggl(
  \fint_U \int_U \frac{|f(x)-f(y)|^p}{|x-y|^{d+sp}} \, dx \, dy
  \biggr)^{\!\nicefrac{1}{p}} < \infty,
\end{equation*}
endowed with the norm
\begin{equation*}
  \|f\|_{\Wul{s}{p}(U)} \coloneqq
  \bigl(
  |U|^{-\frac{sp}{d}}\|f\|_{\underline{L}^p(U)}^p
  + [f]_{\Wul{s}{p}(U)}^p
  \bigr)^{\!\nicefrac{1}{p}}.
\end{equation*}
Its dual, in the volume-normalized convention, is
written $\Wminusul{-s}{p'}(U)$, where
$p' \coloneqq \frac{p}{p-1}$ is the H\"older conjugate of~$p$.
The usual (non-volume-normalized) fractional Sobolev space is denoted
by $W^{s,p}(U)$.

We also define the space $\Woul{s}{p}(U)$ as the closure of $C^\infty_c(U)$ in $\Wul{s}{p}(U)$ with respect to the norm $\|\cdot\|_{\Wul{s}{p}(U)}$ and its dual $\Wminusnoul{-s}{p'}(U)$.

We work with triadic cubes. For each $m \in \Z$ we set
\begin{equation*}
  \cu_m \coloneqq
  \bigl( -\tfrac12 3^m , \tfrac12 3^m \bigr)^{\!d},
\end{equation*}
and we let $\Z^d$ be the standard integer lattice in~$\Rd$.

Matrices are always denoted by boldface letters such as~$\a$ and are
assumed symmetric. For a symmetric matrix~$\a$ we write $|\a|$ for its
spectral norm, $\a^{-1}$ for its inverse (when it exists), and
$\a^{\nicefrac12}$ for its principal square root. When we write $\a \in L(U)$ where $L$ is a space of real valued functions, we mean that each entry of the matrix~$\a(x)$ belongs to~$L(U)$. For an operator
$T : \Rd \to \R$ we denote by~$\|T\|$ its operator norm. 

For a function $u$ we denote its positive part by
$u_+ \coloneqq \max\{u,0\}$. We also use the shorthand
\begin{equation*}
  \cs_s \coloneqq 1 - 3^{-s},
  \qquad s \in [0,1),
\end{equation*}
so that $\cs_s \sum_{n=0}^\infty 3^{-sn} = 1$ for every $s \in (0,1]$.

Throughout the paper, the symbol $C$ denotes a positive constant which
may change from line to line and whose dependence on parameters will
always be clear from the context unless otherwise specified.

\subsection{Besov spaces}
\label{ss.Besov.spaces}

\smallskip

For every $s \in [0,1]$, $p \in [1,\infty]$, and $m \in \Z$, we define the volume-normalized Besov seminorm as
\begin{equation}\label{e.Besov.seminorm}
  [f]_{\Besov{p}{\infty}{s}(\cu_m)} \coloneqq
  \sup_{n \leq m} 3^{-s n}
  \biggl( \avsum_{z \in 3^{n-1} \Z^d, z + \cu_n \subseteq \cu_m} \fint_{z + \cu_n} |f(x) - (f)_{z + \cu_n}|^p \, dx \biggr)^{\!\nicefrac{1}{p}}.
\end{equation}
The corresponding volume normalized Besov norm is defined as
\begin{equation}\label{e.Besov.norm}
  \|f\|_{\Besov{p}{\infty}{s}(\cu_m)} \coloneqq
  \bigl(
  3^{-spm}|(f)_{\cu_m}|^p + [f]_{\Besov{p}{\infty}{s}(\cu_m)}^p
  \bigr)^{\nicefrac1p}
  \,.
\end{equation}
There exists~$C(d,s,p)<\infty$ such that, for every~$u \in \Besov{p}{\infty}{s}(\cu_n)$ and~$0<s\leq t\leq 1$, we have the following relations between Besov norms and other classical norms, see~\cite{Triebel1983}:
\begin{equation}
  \label{e.besov.prelim}
  \left\{
  \begin{aligned}
     & C^{-1}  [u]_{\Besov{p}{\infty}{0}(\cu_n)} \leq
    \|u-(u)_{\cu_n}\|_{\underline{L}^p(\cu_n)} \leq C [u]_{\Besov{p}{\infty}{0}(\cu_n)}\,,
    \\ &
    C^{-1} \|u\|_{\underline{L}^p(\cu_n)}         \leq \|u\|_{\Besov{p}{\infty}{0}(\cu_n)}
    \leq C \|u\|_{\underline{L}^p(\cu_n)} \,,
    \\ &
    \|u\|_{\Besov{p}{\infty}{0}(\cu_n)}
    \leq C 3^{sn} \|u\|_{\Besov{p}{\infty}{s}(\cu_n)}
    \leq
    C 3^{tn} \|u\|_{\Besov{p}{\infty}{t}(\cu_n)}
    \\ &
    C^{-1}
    \|u\|_{C^{0,s}(\cu_n)}
    \leq
    \|u\|_{\Besov{\infty}{\infty}{s}(\cu_n)}
    \leq
    C
    \|u\|_{C^{0,s}(\cu_n)}
    \,.
  \end{aligned}
  \right.
\end{equation}
We denote the H\"older conjugate of an exponent~$p\in (1,\infty)$ by~$p' \coloneqq \frac{p}{p-1}$, with obvious changes if $p=1,\infty$. For every~$s \in (0,1]$ and~$p \in [1,\infty]$, we define the dual norm of~$\Besov{p}{\infty}{s}(\cu_n)$ by
\begin{equation}
  \label{e.Besov.dual.norm}
  \|f\|_{\BesovDual{p'}{1}{-s}(\cu_n)}
  \coloneqq
  \sup \biggl \{  \fint_{\cu_n} fg:
  g \in \Besov{p}{\infty}{s}(\cu_n),
  \|g\|_{\Besov{p}{\infty}{s}(\cu_n)} \leq 1 \biggr \}
  \,.
\end{equation}
Observe that we define the dual norm with respect to \emph{all}~$\Besov{p}{\infty}{s}(\cu_n)$ functions, not just those which vanish at the boundary (zero trace functions)---the hat in the notation is meant to distinguish this dual norm from the usual dual norm of~$\Besov{p}{\infty}{-s}(\cu_m)$, which we do not use in this paper. We also introduce a variant of the dual norm by defining the quantity
\begin{equation}\label{e.Besov.dual.sum.norm}
  \|f\|_{\BesovDualSum{p'}{1}{-s}(\cu_n)} \coloneqq
  \sum_{k=-\infty}^n 3^{sk}
  \biggl( \avsum_{z \in 3^{k} \Z^d \cap \cu_n} |(f)_{z + \cu_k}|^{p'} \biggr)^{\!\nicefrac{1}{p'}}
  \,.
\end{equation}
Compared to~$\|f\|_{\BesovDual{p'}{1}{-s}(\cu_n)}$, the quantity~$\|f\|_{\BesovDualSum{p'}{1}{-s}(\cu_n)}$ is easier to compute and upper bounds the former: we have
\begin{equation}\label{e.dual.norms.relation}
  \|f\|_{\BesovDual{p'}{1}{-s}(\cu_n)} \leq 3^{d+s} \|f\|_{\BesovDualSum{p'}{1}{-s}(\cu_n)},
\end{equation}
see~\cite[Lemma A.1]{AK.HC}. The next lemma gives control for the norm in~\eqref{e.Besov.dual.sum.norm} by means of the norm of the dual space of a fractional Sobolev space.
\begin{lemma}
  \label{l.weak.Wsp.bounds.Besov.circ}
  Let \(s,t\in(0,1]\), \(p\in(1,\infty)\) and $q\in [p,\infty]$.
  Assume
  \begin{equation}
    \label{e.ts.condition.corrected}
    t<\min\!\Big\{1-\tfrac{1}{p},\;2s+\tfrac{d}{q}-\tfrac{d}{p}\Big\}.
  \end{equation}
  Set \(\sigma \coloneqq s-\tfrac{t}{2}-\tfrac{d}{2}\big(\tfrac{1}{p}-\tfrac{1}{q}\big)>0\) and \(p'=\tfrac{p}{p-1}\).
  Then there exists a constant \(C=C(d)<\infty\) such that for every
  \(f\in L^1(\cu_0)\cap \Wminusul{-t}{p}(\cu_0)\) we have
  \begin{equation}
    \label{e.weak.Wsp.bounds.Besov.circ}
    \begin{cases}
    \displaystyle
    \ \sum_{k=-\infty}^0 3^{sk}\,
    \biggl( \avsum_{z \in 3^k \Z^d \cap \cu_0} \bigl| (f)_{z+\cu_k} \bigr|^q \biggr)^{\!\frac{1}{2q}}
    \le
    \frac{1}{\cs_{\sigma}}
    \Bigg(\frac{C}{t\,p'\,(1-t p')}\Bigg)^{\!\frac{1}{2p'}}
    \,[f]_{\Wminusul{-t}{p}(\cu_0)}^{\frac12},\quad &\mbox{if} \ \ q\in [p,\infty)\vspace{1.5mm}\\
    \displaystyle
    \ \sum_{k=-\infty}^0 3^{sk}\,
    \biggl( \max_{z\in 3^{k}\mathbb{Z}^{d}\cap \cu_{0}}|(f)_{z+\cu_{k}}|^{\nicefrac{1}{2}} \biggr)
    \le
    \frac{1}{\cs_{\sigma}}
    \Bigg(\frac{C}{t\,p'\,(1-t p')}\Bigg)^{\!\frac{1}{2p'}}
    \,[f]_{\Wminusul{-t}{p}(\cu_0)}^{\frac12},\quad &\mbox{if} \ \ q=\infty,
    \end{cases}
  \end{equation}
  where $\cs_{\sigma} \coloneqq 1 - 3^{-\sigma}$.
\end{lemma}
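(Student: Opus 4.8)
The plan is to reduce the quantity on the left-hand side of \eqref{e.weak.Wsp.bounds.Besov.circ} — a weighted sum of square-roots of $\ell^q$-averages of cube-averages of $f$ — to the Besov-type dual quantity $\|f\|_{\BesovDualSum{p'}{1}{-t}(\cu_0)}$, which by \eqref{e.dual.norms.relation} is controlled by $\|f\|_{\BesovDual{p'}{1}{-t}(\cu_0)}$, and then to estimate the latter by the fractional Sobolev seminorm $[f]_{\Wminusul{-t}{p}(\cu_0)}$. Since everything on the left is raised to the power $\nicefrac12$, it is natural to first prove the "un-square-rooted" inequality
\begin{equation*}
  \sum_{k=-\infty}^{0} 3^{2sk}
  \biggl( \avsum_{z\in 3^k\Z^d\cap\cu_0} |(f)_{z+\cu_k}|^{q}\biggr)^{\!\nicefrac1q}
  \;\le\; \frac{C}{\cs_{\sigma}^{2}\, t\,p'\,(1-tp')} \, [f]_{\Wminusul{-t}{p}(\cu_0)}
\end{equation*}
(and its $q=\infty$ analogue), and then apply the elementary inequality $\sum_k \sqrt{a_k} \le (\sum_k r^k)^{1/2}(\sum_k r^{-k} a_k)^{1/2}$ — i.e.\ Cauchy--Schwarz with the geometric weight $r^{k}=3^{-\sigma k}$ — to pass to the square-root form, which is exactly where the factor $\cs_\sigma^{-1} = (1-3^{-\sigma})^{-1}$ and the condition $\sigma>0$ enter. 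Note $2s = t + \tfrac{d}{p}-\tfrac{d}{q} + 2\sigma$, so the exponent $2sk$ splits as the "scaling" exponent $(t+\tfrac dp-\tfrac dq)k$ plus the summable geometric part $2\sigma k$.

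For the un-square-rooted inequality, first I would handle the passage from $\ell^q$ to $\ell^{p'}$ over the grid $3^k\Z^d\cap\cu_0$: since there are $\sim 3^{-dk}$ such cubes and $q\ge p \ge p'$ when... — more carefully, one interpolates $\ell^{p'}\hookrightarrow \ell^q$ with the loss of a power of the cardinality, giving a factor $3^{dk(\nicefrac1q-\nicefrac1{p'})}$ (for $q<\infty$), and for $q=\infty$ simply $\max \le (\sum |\cdot|^{p'})^{1/p'}$ directly. This converts the left-hand sum into $\sum_k 3^{\alpha k}\bigl(\avsum_z |(f)_{z+\cu_k}|^{p'}\bigr)^{1/p'}$ for the appropriate exponent $\alpha$; matching exponents shows $\alpha = 2s - \bigl(\tfrac dq - \tfrac d{p'} + \tfrac dk\cdot 0\bigr)$ works out so that, after the Cauchy--Schwarz splitting above, the residual sum is precisely $\sum_k 3^{tk}\bigl(\avsum_z|(f)_{z+\cu_k}|^{p'}\bigr)^{1/p'} = \|f\|_{\BesovDualSum{p'}{1}{-t}(\cu_0)}$. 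Then \eqref{e.dual.norms.relation} bounds this by $3^{d+t}\|f\|_{\BesovDual{p'}{1}{-t}(\cu_0)}$.

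The remaining and main analytic step is the estimate
\begin{equation*}
  \|f\|_{\BesovDual{p'}{1}{-t}(\cu_0)} \;\le\; \Bigl(\tfrac{C}{t\,p'\,(1-tp')}\Bigr)^{\!\nicefrac1{p'}} [f]_{\Wminusul{-t}{p}(\cu_0)},
\end{equation*}
which by duality \eqref{e.Besov.dual.norm} amounts to showing that for every $g\in\Besov{p}{\infty}{t}(\cu_0)$ one has $\|g\|_{\Wul{t}{p'}(\cu_0)}$-type control, i.e.\ that the Besov space $\Besov{p}{\infty}{t}$ embeds (quantitatively, with the stated constant) into the fractional Sobolev space $W^{t,p}$ when $t<1-\nicefrac1p$, and track the blow-up of the embedding constant as $t\to 0$ or $tp'\to 1$. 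Concretely I would test $\fint_{\cu_0}fg$ against the Gagliardo double integral: write $g(x)-g(y)$ as a telescoping sum over dyadic/triadic scales of differences of cube averages $(g)_{z+\cu_k}$, bound each term by the Besov seminorm, and sum the geometric series in $k$; the sum $\sum_k 3^{-(1-t)k}\cdot(\text{stuff})$ converges because $t<1$, and the two-fold integration $\fint\fint |x-y|^{-d-tp'}\,dx\,dy$ over a cube of side $3^k$ produces the harmonic-type factor $\tfrac{1}{tp'(1-tp')}$ (one power from integrating near the diagonal, one from the tail), which is the source of the displayed constant. I expect \emph{this embedding with sharp constant-tracking} to be the principal obstacle: the geometry (telescoping cube averages versus pointwise Gagliardo differences) is standard, but keeping the constant in the explicit form $\bigl(C/(t p'(1-tp'))\bigr)^{1/p'}$ — uniform in the dimension's dependence isolated into $C(d)$ — requires care in the summation of the scale-indexed pieces and in the use of Jensen/Hölder when moving between $\ell^p$ over the lattice and the continuous integral. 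Once this embedding is in hand, chaining it with \eqref{e.dual.norms.relation}, the $\ell^q$-to-$\ell^{p'}$ interpolation, and the Cauchy--Schwarz geometric-weight trick yields \eqref{e.weak.Wsp.bounds.Besov.circ} in both cases $q<\infty$ and $q=\infty$.
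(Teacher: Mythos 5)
Your proposal takes a genuinely different route from the paper, and the route as written has gaps. The paper does \emph{not} pass through the Besov dual norm or an embedding theorem: for each scale $k\le 0$ it constructs a piecewise-constant test function $\psi_k=\sum_z |(f)_{z+\cu_k}|^{p-2}(f)_{z+\cu_k}\,\indc_{z+\cu_k}$, so that $\fint f\psi_k=\avsum_z|(f)_{z+\cu_k}|^p$ exactly; it then bounds $\|\psi_k\|_{\Wul{t}{p'}(\cu_0)}$ by a direct step-function computation (this is where the factor $3^{-tk}\,t^{-\nicefrac1{p'}}(1-tp')^{-\nicefrac1{p'}}$ and the hypothesis $t<1-\nicefrac1p$ come from), pairs by duality, converts $\ell^p\to\ell^q$ on the lattice, takes square roots, and sums the geometric series in $k$. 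Crucially the whole computation lives at the exponent $p$, never at $p'$.

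Several steps in your plan do not survive scrutiny. First, you invoke \eqref{e.dual.norms.relation} as saying $\|f\|_{\BesovDualSum{p'}{1}{-t}}$ is controlled by $\|f\|_{\BesovDual{p'}{1}{-t}}$, but that inequality goes the other way ($\BesovDual\le 3^{d+t}\BesovDualSum$), so your chain is broken at the outset. Second, even granting that, the embedding you then need, $\|f\|_{\BesovDual{p'}{1}{-t}(\cu_0)}\le C\,[f]_{\Wminusul{-t}{p}(\cu_0)}$, amounts by duality to $\Besov{p}{\infty}{t}(\cu_0)\hookrightarrow \Wul{t}{p'}(\cu_0)$; even in the symmetric case $p=p'=2$ this would be $B^t_{2,\infty}\hookrightarrow B^t_{2,2}$ at \emph{equal} smoothness $t$, which fails (one loses an $\varepsilon$). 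Third, routing through $\ell^{p'}$ quantities rather than $\ell^p$ creates a case split you acknowledge but do not resolve: the $\ell^q\to\ell^{p'}$ lattice inequality you invoke requires $q\ge p'$, which is not guaranteed when $p<2$ (then $p'>p$, and one may have $q<p'$); and converting $(\avsum|\cdot|^p)^{\nicefrac1p}$ to $(\avsum|\cdot|^{p'})^{\nicefrac1{p'}}$ needs $p\ge 2$. The paper's test-function construction sidesteps all of this precisely because $\psi_k$ is engineered so that its pairing with $f$ and its $\underline{L}^{p'}$ norm both involve the same quantity $\avsum|(f)_{z+\cu_k}|^p$, keeping the argument entirely at exponent $p$. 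Your identification of where the constant $t^{-1/p'}(1-tp')^{-1/p'}$ comes from (Gagliardo integral of a scale-$3^k$ step function) and of the geometric series producing $\cs_\sigma^{-1}$ is correct, but the duality/embedding scaffolding around it needs to be replaced by the scale-by-scale test-function argument to make the proof sound.
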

\begin{proof}
  For each $k\le0$ set $\mathcal{Z}_k\coloneqq 3^k\Z^d\cap\cu_0$ and define the piecewise-constant test function
  \[
    \psi_k \coloneqq \sum_{z\in \mathcal{Z}_k} \alpha_z\,\indc_{z+\cu_k},
    \qquad
    \alpha_z \coloneqq |(f)_{z+\cu_k}|^{p-2}(f)_{z+\cu_k}.
  \]
  A direct calculation gives the identity
  \begin{equation}\label{e.dual.id}
    \int_{\cu_0} f\,\psi_k
    = \avsum_{z\in \mathcal{Z}_k} |(f)_{z+\cu_k}|^p.
  \end{equation}
  By duality between $\Wul{t}{p'}(\cu_0)$ and $\Wminusul{-t}{p}(\cu_0)$,
  \begin{equation}\label{e.dual.bd}
    \int_{\cu_0} f\,\psi_k
    \le \|f\|_{\Wminusul{-t}{p}(\cu_0)}\,\|\psi_k\|_{\Wul{t}{p'}(\cu_0)}.
  \end{equation}

  Since $\psi_k$ is constant on each cube $z+\cu_k$, a standard estimate for such step functions yields, for some dimensional constant $C(d)$,
  \begin{equation}\label{e.step.fn}
    \|\psi_k\|_{\Wul{t}{p'}(\cu_0)}
    \le C(d)\,t^{-\frac{1}{p'}}(1-tp')^{-\frac{1}{p'}}\, 3^{-t k}
    \biggl(\avsum_{z\in \mathcal{Z}_k} |(f)_{z+\cu_k}|^{p}\biggr)^{\!\frac{1}{p'}}.
  \end{equation}
  Combining~\eqref{e.dual.id}--\eqref{e.step.fn} and rearranging gives
  \begin{equation}\label{e.p-bound}
    \biggl(\avsum_{z\in \mathcal{Z}_k} |(f)_{z+\cu_k}|^p\biggr)^{\!\frac{1}{p}}
    \le C(d)\,t^{-\frac{1}{p'}}(1-tp')^{-\frac{1}{p'}}\,3^{-t k}\,\|f\|_{\Wminusul{-t}{p}(\cu_0)}.
  \end{equation}
  Since $p\le q$, the discrete $\ell^p$--$\ell^q$ inequality on the partition $\mathcal{Z}_k$ (of cardinality $|\mathcal{Z}_k|\simeq 3^{-dk}$) gives, for $q<\infty$,
  \[
    \biggl(\avsum_{z\in \mathcal{Z}_k} |(f)_{z+\cu_k}|^q\biggr)^{\!\frac{1}{q}}
    \le |\mathcal{Z}_k|^{\frac{1}{p}-\frac{1}{q}}
    \biggl(\avsum_{z\in \mathcal{Z}_k} |(f)_{z+\cu_k}|^p\biggr)^{\!\frac{1}{p}}
    = 3^{-d k\left(\frac{1}{p}-\frac{1}{q}\right)}
    \biggl(\avsum_{z\in \mathcal{Z}_k} |(f)_{z+\cu_k}|^p\biggr)^{\!\frac{1}{p}},
  \]
  while for $q=\infty$ it is
  $$
  \max_{z\in \mathcal{Z}_{k}}|(f)_{z+\cu_{k}}|\le |\mathcal{Z}_{k}|^{\frac{1}{p}}\biggl(\avsum_{z\in \mathcal{Z}_k} |(f)_{z+\cu_k}|^p\biggr)^{\!\frac{1}{p}}
    = 3^{-\frac{d k}{p}}
    \biggl(\avsum_{z\in \mathcal{Z}_k} |(f)_{z+\cu_k}|^p\biggr)^{\!\frac{1}{p}}.
  $$
  Combining the previous two estimates with~\eqref{e.p-bound} and taking the square root yields
  \begin{equation}\label{e.q-step}
  \begin{cases}
  \displaystyle
   \  \biggl(\avsum_{z\in \mathcal{Z}_k} |(f)_{z+\cu_k}|^q\biggr)^{\!\frac{1}{2q}}
    \le C_1\,3^{-\frac{t}{2}k}\,3^{-\frac{d}{2}k\left(\frac{1}{p}-\frac{1}{q}\right)}\|f\|_{\Wminusul{-t}{p}(\cu_0)}^{\frac12}\vspace{1.5mm}\\
    \displaystyle
    \ \max_{z\in \mathcal{Z}_{k}}|(f)_{z+\cu_{k}}|^{\nicefrac{1}{2}}\le C_1\,3^{-\frac{tk}{2}}\,3^{-\frac{dk}{2p}}\|f\|_{\Wminusul{-t}{p}(\cu_0)}^{\frac12}
    \end{cases}
  \end{equation}
  where $C_1=C_1(d,t,p)$ equals the square root of the constant $C(d)t^{-\nicefrac{1}{p'}}(1-tp')^{-\nicefrac{1}{p'}}$ in~\eqref{e.p-bound}.

  Finally multiply~\eqref{e.q-step} by $3^{s k}$ and sum over $k\le0$. The geometric series
  \[
    \sum_{k=-\infty}^0 3^{\bigl(s-\frac{t}{2}-\frac{d}{2}(\frac{1}{p}-\frac{1}{q})\bigr)k}
    = \frac{1}{\cs_\sigma}
  \]
  converges precisely when $\sigma\coloneqq s-\frac{t}{2}-\frac{d}{2}\big(\frac{1}{p}-\frac{1}{q}\big)>0$ - if $q=\infty$, take $\nicefrac{1}{q}=0$. Collecting constants yields the stated estimate.
\end{proof}

We next present a proof of the Sobolev embedding~$\Wul{1}{q} \hookrightarrow \Besov{p}{\infty}{s}$.

\begin{lemma}\label{l.besov.by.gradient}
  Let $1 < p < \infty$, $n \in \N$ and let $q \in [p_\ast, p]$ with~$p_\ast = \frac{dp}{d+p}$.
  Then, for every $s \in (0,1-\nicefrac{d}{q}+\nicefrac{d}{p}]$, there exists $C(d) < \infty$ such that, for every $g \in \Wul{1}{q}(\cu_n)$,
  \begin{equation*}
    [g]_{\Besov{p}{\infty}{s}(\cu_n)} \leq C 3^{(1-s)n} \|\nabla g\|_{\underline{L}^{q}(\cu_n)}.
  \end{equation*}
\end{lemma}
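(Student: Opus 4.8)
The goal is to bound the Besov seminorm $[g]_{\Besov{p}{\infty}{s}(\cu_n)}$, which by definition~\eqref{e.Besov.seminorm} is the supremum over scales $k \le n$ of $3^{-sk}$ times the $\ell^\infty$-in-$z$, $\underline{L}^p$-in-$x$ average of the local oscillations $\|g - (g)_{z+\cu_k}\|_{\underline{L}^p(z+\cu_k)}$ over triadic subcubes $z+\cu_k \subseteq \cu_n$. So the plan is to fix a scale $k \le n$ and a subcube $\cu' = z+\cu_k$ of sidelength $3^k$, estimate $\|g - (g)_{\cu'}\|_{\underline{L}^p(\cu')}$ by the volume-normalized $\underline{L}^q$-norm of $\nabla g$ on $\cu'$, and then pass from the local norms back to the global scale-$n$ cube $\cu_n$ and check that the powers of $3$ assemble into the claimed $3^{(1-s)n}$.

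\textbf{Step 1: Local Sobolev--Poincaré on a single subcube.} On a cube $\cu'$ of sidelength $3^k$, the classical Sobolev--Poincaré inequality reads $\|g-(g)_{\cu'}\|_{\underline{L}^{q^*}(\cu')} \le C(d)\, 3^k\, \|\nabla g\|_{\underline{L}^q(\cu')}$ with $q^* = \frac{dq}{d-q}$ (and $q^* = \infty$ when $q \ge d$, where one uses Morrey instead), the factor $3^k$ coming from the scaling of the inequality and our volume-normalized norms. The hypothesis $q \in [p_\ast, p]$ with $p_\ast = \frac{dp}{d+p}$ ensures $q \le p$ (so we may later pass from $\underline{L}^p$ to a smaller exponent) while $q \ge p_\ast$ guarantees $q^* \ge p$, so that $\|g-(g)_{\cu'}\|_{\underline{L}^p(\cu')} \le \|g-(g)_{\cu'}\|_{\underline{L}^{q^*}(\cu')} \le C(d)\,3^k\,\|\nabla g\|_{\underline{L}^q(\cu')}$ by Jensen/Hölder on the normalized measure. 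This gives the oscillation on each subcube controlled by $3^k$ times the local gradient norm.

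\textbf{Step 2: Aggregating subcubes at scale $k$ and comparing to scale $n$.} Next I would raise the previous bound to the power $p$, average over $z$ with the operator $\avsum$ (which is a normalized average, hence bounded by the sup), and take the $p$-th root, arriving at $\bigl(\avsum_z \|g-(g)_{z+\cu_k}\|_{\underline{L}^p(z+\cu_k)}^p\bigr)^{1/p} \le C(d)\, 3^k \bigl(\avsum_z \|\nabla g\|_{\underline{L}^q(z+\cu_k)}^q\bigr)^{1/q}$, using $q \le p$ and Jensen once more to pull the exponent out of the $z$-average in the favorable direction. Since the cubes $\{z+\cu_k\}$ tile $\cu_n$, the averaged-over-$z$ normalized $\underline{L}^q$ average of $\nabla g$ on the small cubes equals the normalized $\underline{L}^q$ average of $\nabla g$ on $\cu_n$ itself, i.e.\ $\bigl(\avsum_z \|\nabla g\|_{\underline{L}^q(z+\cu_k)}^q\bigr)^{1/q} = \|\nabla g\|_{\underline{L}^q(\cu_n)}$. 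Hence for each $k \le n$, $3^{-sk}\bigl(\avsum_z \|g-(g)_{z+\cu_k}\|_{\underline{L}^p(z+\cu_k)}^p\bigr)^{1/p} \le C(d)\,3^{(1-s)k}\|\nabla g\|_{\underline{L}^q(\cu_n)}$.

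\textbf{Step 3: Taking the supremum over $k$.} Since $s < 1$, the prefactor $3^{(1-s)k}$ is increasing in $k$, so its supremum over $k \le n$ is attained at $k = n$, yielding $3^{(1-s)n}$; taking the sup over $k$ on the left produces exactly $[g]_{\Besov{p}{\infty}{s}(\cu_n)}$ and gives the stated inequality $[g]_{\Besov{p}{\infty}{s}(\cu_n)} \le C(d)\, 3^{(1-s)n}\|\nabla g\|_{\underline{L}^q(\cu_n)}$. I expect the only genuinely delicate point is the bookkeeping with the volume-normalized conventions and making sure the endpoint cases $q = d$ (use Morrey/John--Nirenberg for $q^* = \infty$) and $q = p$ (then the constraint $s \le 1 - d/q + d/p = 1$ is vacuous on the open interval) are handled; the role of the upper constraint $s \le 1 - d/q + d/p$ is precisely to guarantee $q^* \ge p$ in Step~1, equivalently that the Sobolev gain $d/q - d/p$ in integrability dominates the fractional smoothness $s$ being claimed, so I would verify that inequality $q^* \ge p \iff \frac{1}{q} - \frac1d \le \frac1p$, which combined with $s \le 1 - \frac dq + \frac dp$ is consistent. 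Everything else is a routine scaling computation.
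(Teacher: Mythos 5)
Your Step~2 contains a genuine error in the direction of Jensen's inequality that causes you to lose a factor, and in turn misidentifies the role of the hypothesis $s \le 1-\nicefrac dq+\nicefrac dp$. After the local Sobolev--Poincar\'e step, you have $\bigl(\avsum_z \|\nabla g\|_{\underline{L}^q(z+\cu_k)}^{p}\bigr)^{1/p}$, i.e.\ an $\ell^p$-average in $z$, and you claim to pass to $\bigl(\avsum_z \|\nabla g\|_{\underline{L}^q(z+\cu_k)}^{q}\bigr)^{1/q}$ ``by Jensen in the favorable direction.'' But since $q \le p$ and $\avsum_z$ is a probability average over the $N = 3^{d(n-k)}$ subcubes, the normalized $\ell^q$-average is \emph{smaller}, not larger, than the $\ell^p$-average: Jensen gives $\bigl(\avsum a_z^q\bigr)^{1/q}\le \bigl(\avsum a_z^p\bigr)^{1/p}$. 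To go the other way you must pay the factor $N^{\nicefrac1q-\nicefrac1p}=3^{d(n-k)(\nicefrac1q-\nicefrac1p)}$, which you have silently dropped. This factor depends on $n-k$, so its presence changes Step~3: the exponent of $3^k$ inside the supremum becomes $1-s-d(\nicefrac1q-\nicefrac1p)$, not $1-s$, and one needs precisely the hypothesis $s\le 1-\nicefrac dq+\nicefrac dp$ to guarantee this exponent is nonnegative so that the supremum sits at $k=n$. That hypothesis is therefore not redundant with $q\ge p_\ast$, contrary to what your Step~3 suggests ($q\ge p_\ast$ is what makes the pointwise Sobolev--Poincar\'e of Step~1 valid, whereas $s\le 1-\nicefrac dq+\nicefrac dp$ is what makes the aggregation in Step~3 close). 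Once you restore the missing $3^{d(n-k)(\nicefrac1q-\nicefrac1p)}$ factor (this is the paper's discrete $\ell^{p/q}$--$\ell^1$ step with $N=3^{d(n-k)}$ points), the supremum is maximized at $k=n$ and contributes $3^{(1-s)n}$, and the argument matches the paper's. Steps~1 and~3 are otherwise sound.
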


\begin{proof}
  Recall the definition of the Besov seminorm in~\cref{e.Besov.seminorm}:
  \begin{equation*}
    [g]_{\Besov{p}{\infty}{s}(\cu_n)}
    =
    \sup_{k \leq n} 3^{-sk}
    \biggl(
    \avsum_{z \in 3^k \Z^d \cap \cu_n} \|g - (g)_{z+\cu_k}\|_{\underline{L}^{p}(z+\cu_k)}^{p}
    \biggr)^{\!\nicefrac{1}{p}}
    \,.
  \end{equation*}
  The standard Sobolev-Poincaré inequality applied to each term in the sum shows that there exists~$C(d)<\infty$ such that, for every~$q \in [p_\ast, p]$,
  \begin{equation*}
    \sup_{k \leq n} 3^{-sk}
    \biggl(
    \avsum_{z \in 3^k \Z^d \cap \cu_n} \|g - (g)_{z+\cu_k}\|_{\underline{L}^{p}(z+\cu_k)}^{p}
    \biggr)^{\!\nicefrac{1}{p}}
    \leq
    C
    \sup_{k \leq n} 3^{-sk+k}
    \biggl(
    \avsum_{z \in 3^k \Z^d \cap \cu_n} \|\nabla g\|_{\underline{L}^{q}(z+\cu_k)}^{p}
    \biggr)^{\!\nicefrac{1}{p}}\,.
  \end{equation*}
  Using the discrete $\ell^t$--$\ell^1$ inequality with $t = \nicefrac{p}{q}$ on a partition of size~$N = 3^{d(n-k)}$, we have, for every~$k \leq n$,
  \begin{equation*}
    \biggl(
    \avsum_{z \in 3^k \Z^d \cap \cu_n} \|\nabla g\|_{\underline{L}^{q}(z+\cu_k)}^{p} \biggr)^{\!\nicefrac{q}{p}}
    \leq
    3^{d(n-k)(1-\nicefrac{q}{p})}
    \avsum_{z \in 3^k \Z^d \cap \cu_n} \|\nabla g\|_{\underline{L}^{q}(z+\cu_k)}^{q}
      = 3^{d(n-k)(1-\nicefrac{q}{p})} \|\nabla g\|_{\underline{L}^{q}(\cu_n)}^{q}
    \,.
  \end{equation*}
  We therefore obtain
  \begin{equation*}
    [g]_{\Besov{p}{\infty}{s}(\cu_n)} \leq C \sup_{k \leq n} 3^{(1-s)k+d(n-k)(\nicefrac{1}{q}-\nicefrac{1}{p})} \|\nabla g\|_{\underline{L}^{q}(\cu_n)}
    =
    C 3^{dn(\nicefrac{1}{q}-\nicefrac{1}{p}) } \sup_{k \leq n} 3^{k (1-s - d ( \nicefrac{1}{q} -\nicefrac{1}{p}))}
    \|\nabla g\|_{\underline{L}^{q}(\cu_n)}
    \, .
  \end{equation*}
  By assumption, we have that the exponent of~$3^k$ in the supremum is nonnegative. We deduce that the supremum is attained at $k=n$, and we conclude that~$[g]_{\Besov{p}{\infty}{s}(\cu_n)} \leq C 3^{(1-s)n}\|\nabla g\|_{\underline{L}^{q}(\cu_n)}$, which completes the proof.
\end{proof}

The following lemma is our version of the Sobolev-Poincaré inequality in Besov spaces, it also contains a product rule estimate, which is essentially~\cite[Lemma A.3]{AK.HC}. 
\begin{lemma}
  \label{l.besov.poincare}
  There exists a constant~$C(d) < \infty$ such that, for every~$n \in \Z$,~$s\in (0,1)$ and~$u \in H^1_\a(\cu_n)$, we have
  \begin{equation} \label{e.besov.poincare}
    \|u-(u)_{\cu_n}\|_{\underline{L}^{2^\ast_s}(\cu_n)}
    \leq
    C 3^{(1-s)n} \|\nabla u\|_{\BesovDualSum{2}{1}{-s}(\cu_n)}
    \quad \mbox{with} \quad 2^\ast_s = \frac{2d}{d-2(1-s)}\,.
  \end{equation}
  Moreover, for every $\varphi \in \Wul{2}{\infty}(\cu_n)$,
  \begin{equation} \label{e.besov.product.rule}
    \|(u-(u)_{\cu_n}) \nabla \varphi\|_{\Besov{2}{\infty}{s}(\cu_n)}
    \leq
    C 3^n \|\nabla \varphi\|_{\Wul{1}{\infty}(\cu_n)} \|\nabla u\|_{\BesovDualSum{2}{1}{s-1}(\cu_n)}.
  \end{equation}
\end{lemma}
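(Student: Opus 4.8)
The plan is to treat the two assertions separately, but both will follow by decomposing $u - (u)_{\cu_n}$ into contributions from the triadic scales $k \leq n$ and applying the Besov--Sobolev embedding of Lemma~\ref{l.besov.by.gradient} at each scale. By translation and scaling we may reduce to $n = 0$; the factors $3^{(1-s)n}$, $3^n$ will be tracked by homogeneity. For the first estimate~\eqref{e.besov.poincare}, I would start from the telescoping identity that writes $u - (u)_{\cu_0}$ as a sum over $k \leq 0$ of martingale-type increments $\sum_{z} \bigl( (u)_{z'+\cu_{k}} - (u)_{z+\cu_{k-1}}\bigr)\indc_{z+\cu_{k-1}}$, where $z'$ is the parent cube, so that each increment is controlled by an average of $|(u)_{z+\cu_{k-1}} - (u)_{z'+\cu_k}|$. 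By Poincaré on each cube, these averaged differences are bounded by $3^{k}\|\nabla u\|_{\underline{L}^{2}(z'+\cu_k)}$-type quantities; summing against the scale-discounted weights $3^{sk}$ in the definition~\eqref{e.Besov.dual.sum.norm} of $\|\nabla u\|_{\BesovDualSum{2}{1}{-s}}$ and invoking the embedding $\Besov{2}{\infty}{1-s} \hookrightarrow \underline{L}^{2^\ast_s}$ (this is exactly the $s=0$ Besov--Sobolev embedding combined with the interpolation chain in~\eqref{e.besov.prelim}, or a direct application of Lemma~\ref{l.besov.by.gradient} in its dual form) yields the claimed $\underline{L}^{2^\ast_s}$-bound with the correct critical exponent $2^\ast_s = \tfrac{2d}{d-2(1-s)}$.

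For the product-rule estimate~\eqref{e.besov.product.rule}, I would expand the Besov seminorm of $(u - (u)_{\cu_0})\nabla\varphi$ by inserting and subtracting $(u)_{z+\cu_k} \nabla\varphi$ on each triadic cube $z + \cu_k$, splitting the oscillation of the product into (i) $\bigl(u - (u)_{z+\cu_k}\bigr)\nabla\varphi$, whose $\underline{L}^2(z+\cu_k)$-norm is bounded by $\|\nabla\varphi\|_{L^\infty}$ times the local oscillation of $u$, hence by $3^{k}\|\nabla\varphi\|_{L^\infty}\|\nabla u\|_{\underline{L}^2(z+\cu_k)}$ after Poincaré; and (ii) $\bigl((u)_{z+\cu_k} - (u)_{\cu_0}\bigr)\nabla\varphi$, whose oscillation on $z+\cu_k$ comes entirely from $\nabla\varphi$ and is bounded by $3^{k}\|\nabla^2\varphi\|_{L^\infty}$-type terms multiplied by $|(u)_{z+\cu_k} - (u)_{\cu_0}|$, the latter being estimated by telescoping back up to scale $0$ as in the first part. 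After multiplying by the Besov weight $3^{-sk}$ and taking the supremum over $k \leq 0$, both contributions are seen to be controlled by $3^0 \|\nabla\varphi\|_{\Wul{1}{\infty}(\cu_0)}$ times the sum $\sum_{k\le 0} 3^{(s-1)k}(\avsum_z |(\nabla u)_{z+\cu_k}|^2)^{1/2}$, which is precisely $\|\nabla u\|_{\BesovDualSum{2}{1}{s-1}(\cu_0)}$; this is where the shift from $-s$ to $s-1$ in the dual exponent enters, reflecting that one derivative has been transferred onto $\varphi$. This product rule is stated to be essentially~\cite[Lemma A.3]{AK.HC}, so I would either cite that estimate directly or adapt its short proof.

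The main obstacle, and the step requiring the most care, is the bookkeeping of the telescoping sums against the scale-discounted Besov-dual weights: one must verify that the geometric series in $k$ converge with the right rate (using $s \in (0,1)$ for~\eqref{e.besov.poincare} and the pairing of weights $3^{-sk} \cdot 3^{(s-1)k} \cdot 3^{k} = 1$ for~\eqref{e.besov.product.rule}), and that the discrete $\ell^{p}$--$\ell^{q}$ passages on partitions of cardinality $\simeq 3^{-dk}$ are applied with exponents consistent with the critical Sobolev exponent $2^\ast_s$. A secondary technical point is handling the endpoint $d = 2$, where $2^\ast_s$ is finite for all $s \in (0,1)$ but the embedding Lemma~\ref{l.besov.by.gradient} must be invoked with $q = p_\ast$; this is covered by the stated range of that lemma and introduces no genuine difficulty. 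Everything else is routine Poincaré/Sobolev on cubes combined with the elementary inequalities already recorded in~\eqref{e.besov.prelim} and~\eqref{e.dual.norms.relation}.
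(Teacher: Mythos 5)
The telescoping approach for~\eqref{e.besov.poincare} has a genuine gap: when you bound the martingale increments via Poincar\'e, you end up with local $L^2$-norms $\|\nabla u\|_{\underline{L}^2(z+\cu_k)}$ of the gradient on each triadic cube, but the right-hand side of~\eqref{e.besov.poincare} is $\|\nabla u\|_{\BesovDualSum{2}{1}{-s}(\cu_n)}$, which by definition~\eqref{e.Besov.dual.sum.norm} involves only the \emph{signed averages} $(\nabla u)_{z+\cu_k}$, not their $L^2$-norms. These are not comparable: for a rapidly oscillating gradient field the signed averages can be arbitrarily small while the local $L^2$-norms stay bounded below. The paper obtains the stronger signed-average control by a duality argument (Step~1 of the paper's proof): one solves a Neumann problem $-\Delta w = |u|^{q-2}u - (|u|^{q-2}u)_{\cu_n}$, tests with $u$, and then pairs $\nabla u$ against the piecewise-constant test functions that define the $\BesovDual$ norm, so only averages of $\nabla u$ ever enter. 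This duality step is the heart of the lemma and cannot be replaced by a direct telescoping plus the classical Poincar\'e inequality; the whole point of the coarse-grained framework is precisely that such signed-average control of $\nabla u$ is what the coarse-grained matrices govern (Lemma~\ref{l.besov.fluxes.easy}), whereas $\|\nabla u\|_{L^2}$ is useless without pointwise ellipticity.

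A secondary issue is the invoked embedding $\Besov{2}{\infty}{1-s}\hookrightarrow \underline L^{2^*_s}$. With third index $\infty$ (a supremum over scales, as in the paper's Besov seminorm), this embedding fails at the critical exponent; the classical statement requires the third index to be at most $2^*_s$. What does converge is the $\ell^1$-sum over scales appearing in the $\BesovDualSum$ norm, and it is exactly this $\ell^1$-structure that the duality argument exploits. Finally, the same Poincar\'e-versus-signed-average confusion recurs in your treatment of item~(i) in the proof of~\eqref{e.besov.product.rule}: after bounding $\|u-(u)_{z+\cu_k}\|_{\underline L^2(z+\cu_k)}$ by $3^k\|\nabla u\|_{\underline L^2(z+\cu_k)}$ you would need $\sum_k 3^{(s-1)k}(\avsum_z|(\nabla u)_{z+\cu_k}|^2)^{1/2}$ instead, which is why the paper's Step~3 appeals to the embedding~\eqref{e.besov.embedding} established in Step~2 (and hence to the duality of Step~1) rather than to the plain Poincar\'e inequality. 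Your splitting of the product-rule term into two pieces is otherwise identical in spirit to the paper's.
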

\begin{proof}
  Without loss of generality, we can assume that~$(u)_{\cu_n} = 0$.

  \emph{Step 1: The Sobolev embedding.}
  Let $q \coloneqq 2^\ast_s$ and let $w \in W^{1,q'}(\cu_n)$ solve
  \begin{equation}
    -\Delta w = |u|^{q-2}u - (|u|^{q-2}u)_{\cu_n} \quad\text{in }\cu_n,
    \qquad
    \partial_\nu w=0 \text{ on }\partial\cu_n.
    \label{e.w.PDE}
  \end{equation}
  Testing~\eqref{e.w.PDE} with $u$ and using the duality of our Besov spaces yields
  \begin{equation}
    \|u\|_{\underline{L}^q(\cu_n)}^q
    = \fint_{\cu_n} \nabla u \cdot \nabla w
    \leq \|\nabla u\|_{\BesovDual{2}{1}{-s}(\cu_n)} \;\|\nabla w\|_{\Besov{2}{\infty}{s}(\cu_n)}.
    \label{e.u.norm.duality}
  \end{equation}
  By Lemma~\ref{l.besov.by.gradient}, for $s \leq 1 - \nicefrac{d}{q'} + \nicefrac{d}{2}$ we have
  \begin{equation}
    [\nabla w]_{\Besov{2}{\infty}{s}(\cu_n)} \leq C 3^{(1-s)n} \|\nabla w\|_{\Wul{1}{q'}(\cu_n)}.
    \label{e.besov.gradient.w}
  \end{equation}
  Moreover, the mean term is controlled by Sobolev-Poincaré
  \begin{equation}
    3^{-sn} |(\nabla w)_{\cu_n}| \leq 3^{-sn}\|\nabla w\|_{\underline{L}^{q'}(\cu_n)} \leq C 3^{(1-s)n} \|\nabla w\|_{\Wul{1}{q'}(\cu_n)}.
    \label{e.mean.gradient.w}
  \end{equation}
  Classical Calderón–Zygmund estimates give
  \begin{equation}
    \|\nabla w\|_{\Wul{1}{q'}(\cu_n)} \leq C \|u\|_{\underline{L}^q(\cu_n)}^{q-1}.
    \label{e.CZ.estimate}
  \end{equation}
  Combining~\eqref{e.u.norm.duality}--\eqref{e.CZ.estimate} yields
  \begin{equation}
    \|u\|_{\underline{L}^q(\cu_n)}^q
    \leq C 3^{(1-s)n} \|\nabla u\|_{\BesovDual{2}{1}{-s}(\cu_n)} \|u\|_{\underline{L}^q(\cu_n)}^{q-1},
    \label{e.final.sobolev}
  \end{equation}
  and dividing both sides of~\eqref{e.final.sobolev} by $\|u\|_{\underline{L}^q(\cu_n)}^{q-1}$ (when nonzero) yields~\eqref{e.besov.poincare}.

  \emph{Step 2: Embedding of~$\BesovDualSum{2}{1}{s-1}$ into~$\Besov{2}{\infty}{s}$.}
  We aim to prove the inequality
  \begin{equation}\label{e.besov.embedding}
    \|u-(u)_{\cu_n}\|_{\Besov{2}{\infty}{s}(\cu_n)} \leq C \|\nabla u\|_{\BesovDualSum{2}{1}{s-1}(\cu_n)}.
  \end{equation}
  Using Step~1 with $s$ replaced by $1$, we have for any box $z+\cu_k \subseteq \cu_n$,
  \begin{equation*}
    \|u-(u)_{z+\cu_k}\|_{\underline{L}^{2}(z+\cu_k)} \leq C \sum_{m=-\infty}^k 3^m \biggl( \avsum_{y \in 3^m \Z^d \cap \cu_n} |(\nabla u)_{y+\cu_m}|^{2} \biggr)^{\!\nicefrac{1}{2}}.
  \end{equation*}
  Therefore, applying the definition of the Besov seminorm, the above and Hölder's inequality yield
  \begin{align*}
    \|u-(u)_{\cu_n}\|_{\Besov{2}{\infty}{s}(\cu_n)}
    & =
    \sup_{k \leq n} 3^{-sk}
    \biggl(
    \avsum_{z \in 3^k \Z^d \cap \cu_n} \|u - (u)_{z+\cu_k}\|_{\underline{L}^{2}(z+\cu_k)}^{2}
    \biggr)^{\!\nicefrac{1}{2}}
    \\
    & \leq
    \sup_{k \leq n} 3^{-sk}
    \biggl(
    \avsum_{z \in 3^k \Z^d \cap \cu_n} \biggl (\sum_{m=-\infty}^k 3^m \biggl (\avsum_{y \in 3^m \Z^d \cap z+\cu_k} |(\nabla u)_{y+\cu_m}|^{2} \biggr)^{\!\nicefrac{1}{2}} \biggr)^{2}
    \biggr)^{\!\nicefrac{1}{2}}
    \\
    & \leq
    \sum_{m=-\infty}^n 3^{(1-s)m} \biggl (\avsum_{y \in 3^{m} \Z^d \cap \cu_n} |(\nabla u)_{y+\cu_{m}}|^{2} \biggr)^{\!\nicefrac{1}{2}} 
    =
    \|\nabla u\|_{\BesovDualSum{2}{1}{s-1}(\cu_n)},
  \end{align*}
  which is~\eqref{e.besov.embedding}.

  \emph{Step 3.} We now prove~\eqref{e.besov.product.rule}. Observe that
  by the definition of the Besov seminorm, we have
  \begin{equation*}
    [u \nabla \varphi]_{\Besov{2}{\infty}{s}(\cu_n)}
    =
    \sup_{k \leq n} 3^{-sk}
    \biggl(
    \avsum_{z \in 3^k \Z^d \cap \cu_n} \|u \nabla \varphi - (u \nabla \varphi)_{z+\cu_k}\|_{\underline{L}^{2}(z+\cu_k)}^{2}
    \biggr)^{\!\nicefrac{1}{2}}.
  \end{equation*}
  Using the triangle inequality inside the average, we get
  \begin{multline*}
    \|u \nabla \varphi - (u \nabla \varphi)_{z+\cu_k}\|_{\underline{L}^{2}(z+\cu_k)} \\
    \leq
    C \|\nabla \varphi\|_{\underline{L}^{\infty}(z+\cu_k)} \|u-(u)_{z+\cu_k}\|_{\underline{L}^{2}(z+\cu_k)} 
    +
    C \|u\|_{\underline{L}^{2}(z+\cu_k)} \|\nabla \varphi - (\nabla \varphi)_{z+\cu_k}\|_{\underline{L}^{2}(z+\cu_k)}.
  \end{multline*}
  From the above we deduce that
  \begin{equation}
    [u \nabla \varphi]_{\Besov{2}{\infty}{s}(\cu_n)}
    \leq
    C \|\nabla \varphi\|_{\underline{L}^{\infty}(\cu_n)} [u]_{\Besov{2}{\infty}{s}(\cu_n)}
    +
    C \|\nabla \varphi\|_{\Wul{1}{\infty}(\cu_n)} \|u\|_{\underline{L}^{2}(\cu_n)}.
  \end{equation}
  We can now use~\eqref{e.besov.embedding} to bound the right-hand side and obtain the desired result.
\end{proof}

\subsection{Weighted Sobolev spaces}
\label{ss.sobolev.spaces}

In this section we assume that the domain~$U \subseteq\Rd$ is a \emph{bounded} Lipschitz domain and we assume that the coefficient field~$\a$ is a positive semi-definite symmetric matrix such that $\a(x)$ is invertible for a.e.~$x$, satisfying~$\a,\a^{-1} \in L^1(U;\R^{d\times d})$. We define the norm
\begin{equation}
  \label{e.H1s}
  \| u \|_{H^1_\a(U)} \coloneqq \Bigl( \| u \|_{L^2(U)}^2 + \int_U \nabla u \cdot \a \nabla u\Bigr)^{\nicefrac12} \,.
\end{equation}
By H\"older's inequality, we have, for every~$u \in C^\infty (U)$, that~$\| u \|_{H^1_\a(U)} < \infty$. We define the function space~$H^1_\a(U)$ as the completion of~$C^\infty(U)$ with respect to the above norm. Again by H\"older's inequality, we have that
\begin{equation}
  \label{e.grad.and.flux.in.Lone}
  u\in H^1_\a(U)
  \implies
  \nabla u , \,\a\nabla u \in L^1(U)
  \,.
\end{equation}
Indeed,~$u\in H^1_\a(U)$ implies that~$\a^{\nicefrac12} \nabla u \in L^2(U)$ and the assumption of~$\a,\a^{-1}\in L^1(U)$ implies that~$\a^{\nicefrac12},\a^{-\nicefrac12} \in L^2(U)$. Thus, Cauchy-Schwarz gives the implication~\eqref{e.grad.and.flux.in.Lone}.
Following the argument in~\cite[Theorem 1.11]{KO84}, the space~$H^1_\a(U)$ is a complete Hilbert space for every Lipschitz domain~$U\subseteq\Rd$ and~$\a$ satisfying~$\a,\a^{-1} \in L^1(U)$. Notice that the paper~\cite{KO84} considers only the case when~$\a$ is scalar, but the proof generalizes to the general (matrix-valued) case.  We also define the subspace of~``trace zero'' functions by
\begin{equation*}
  H^1_{\a,0}(U) \coloneqq  \mbox{closure of~$C^\infty_c (U)$ with respect to~$\| \cdot\|_{H^1_{\a}(U)}$}\,.
\end{equation*}
We denote the dual space of~$H^1_{\a,0}(U) $ by~$H_\a^{-1}(U)$.
We define the dual norm on $H^{-1}_{\a}(U)$ by
\begin{equation*}
  \| f \|_{H^{-1}_{\a}(U)}  \coloneqq  \sup \bigl\{ \langle u,f \rangle \,:\, u\in H^1_{\a,0}(U), \ \| u \|_{H^1_{\a}(U)} \leq 1 \bigr\},
\end{equation*}
and, when convenient, we abusively write $\int_U u f$ in place of the pairing $\langle u,f\rangle$ for $u\in H^1_{\a,0}(U)$ and $f\in H^{-1}_\a(U)$.

The linear subspace of~$H^1_{\a}(U)$ consisting of solutions of the equation~$\nabla\cdot \a\nabla u =0$ is denoted by
\begin{equation*}
  \A(U;\a)  \coloneqq
  \bigl\{
  u \in H^1_\a(U) \,:\,
  \nabla \cdot \a\nabla u = 0 \ \mbox{in} \ U
  \bigr\}
  \,.
\end{equation*}
Here the equation~$\nabla \cdot \a\nabla u=0$ is to be understood in the sense of distributions; that is,
\begin{equation*}
  \int_U \nabla \psi \cdot \a\nabla u
  = 0 \,,
  \qquad \forall \psi \in C^\infty_c(U)\,.
\end{equation*}
We write~$\A(U)$ instead of~$\A(U;\a)$ when the coefficient field~$\a$ is clear from the context.  We denote the space of weak subsolutions as $\A_-(U) \coloneqq \{ u \in H^1_\a(U) : -\nabla \cdot ( \a \nabla u ) \leq 0 \text{ in } U \}$, where the meaning of the differential inequality is the following:
\begin{equation*}
  u \in \A_-(U)  \,, \quad  v \in C^\infty_c(U)\,, \quad v \geq 0
  \quad
  \implies
  \quad
  \int_U \nabla v \cdot \a\nabla u \leq 0
  \,.
\end{equation*}

\smallskip

Next, consider the Dirichlet problem
\begin{equation}
  \label{e.H1s.Dirichlet}
  \left\{
  \begin{aligned}
    & -\nabla \cdot \a \nabla v = f
    & \mbox{in} & \ U \,,
    \\
    & v = 0 & \mbox{on} & \ \partial U\,,
  \end{aligned}
  \right.
\end{equation}
with $f\in H^{-1}_{\a}(U)$. Define the bilinear form $B(u,v) \coloneqq \int_U \nabla u \cdot \a \nabla v$.
By Cauchy-Schwarz and the definition of the $H^1_\a$-norm, $B$ is bounded on $H^1_{\a,0}(U)\times H^1_{\a,0}(U)$ and $B$ is coercive on $H^1_{\a,0}(U)$ in the energy sense. Finally, for $f\in H^{-1}_{\a}(U)$ the linear functional $v\mapsto\langle v,f\rangle$ is continuous on $H^1_{\a,0}(U)$ by the definition of the dual. Hence, by the Lax-Milgram theorem there exists a unique solution $u\in H^1_{\a,0}(U)$ of the weak formulation
\begin{equation*}
  B(u,v)=\langle v,f\rangle,\qquad\forall v\in H^1_{\a,0}(U),
\end{equation*}
i.e.~a unique weak solution of the boundary-value problem~\eqref{e.H1s.Dirichlet}.

By the solvability above one also obtains the usual characterization of the dual: every element of $H^{-1}_{\a}(U)$ can be represented (in the distributional sense) as a divergence of an $L^2$--vector field weighted by $\a^{\nicefrac12}$, namely
\begin{equation*}
  H^{-1}_{\a}(U)
  =
  \bigl\{
  \nabla \cdot (\a^{\nicefrac12} \f) \,:\, \f \in L^2(U;\Rd)
  \bigr\}.
\end{equation*}

\smallskip

In Appendix~\ref{app.sobolev}, we prove some necessary testing identities for solutions and subsolutions.

\subsection{Cones of subsolutions}

Given a Lipschitz domain~$U$ in~$\Rd$, define the following subsets of~$H^1_\a(U)$:
\begin{equation}
  \label{e.admissible.cones}
  \left\{
  \begin{aligned}
     & \baconsol\!(U)  \coloneqq  \A(U)\,,\\
     & \baconsub\!(U) \coloneqq  \A_-(U)\,, \\
     & \baconconv\!(U) \coloneqq  \bigl \{ \Phi(u) \in H^1_\a(U) \, : \, u \in \A(U) \,, \; \Phi: u(U) \to \R \text{ is~$C^1$, convex and monotone} \bigr\}\,, \\
     & \bacontrunc\!(U) \coloneqq  \bigl\{ (u-k)_+ \, : \, u \in \A(U)\,, \; k \in \R\bigr\}\,,\\
     & \baconpow\!(U) \coloneqq  \bigl\{ \Phi(u) \in H^1_\a(U) \, : \, u \in \A(U)\,, u \geq 0\,, \; \Phi'(u) = C u^p\,,\; C \in \R\,, \; p \in \R\,, \;\\ 
     & \qquad \qquad  \qquad \qquad  \qquad \qquad \quad
    \Phi:u(U) \to \R \text{ is convex and monotone} \bigr\}\,,
    \\
    & \baconmax\!(U) \coloneqq  H^1_\a(U)\,.
  \end{aligned}
  \right.
\end{equation}
We call the six families of cones in~\eqref{e.admissible.cones}, and unions of them, the \emph{admissible cones}, and let~$\mathcal{C}(U)$ denote the set of admissible cones.
It is easy to see that each one of the above satisfies the following properties:
\begin{enumerate}[label=(\roman*)]
  \item \textbf{Cone property:} For every~$u \in \bacon(U)$ and~$t \in \R_+$, we have $tu \in \bacon(U)$.
  \item \textbf{Restriction stability:} If $u \in \bacon(U)$ and $V \subseteq U$ is a Lipschitz domain, then $u|_V \in \bacon(V)$.
\end{enumerate}

\smallskip

The cone $\baconsub(U)$ is special as it is an admissible family of convex cones, furthermore each cone is stable under convex modifications in the following sense: Let $u \in \baconsub(U)$ and take a non-decreasing convex function $\Phi$ such that $\Phi(u),\Phi'(u) \in H^1_\a(U)$, test the equation with $\eta = (\Phi(u+h)-\Phi(u)) \varphi$ for~$h>0$ and~$\varphi \in C^\infty_c(U)$ and $\varphi \geq 0$, then
\begin{align*}
  0 &
  \geq \lim_{h\downarrow 0}  \frac1h \fint_U \a \nabla u \cdot \nabla \bigl( (\Phi(u+h) - \Phi(u)) \varphi\bigr)
  \notag \\ &
  = \lim_{h\downarrow 0}  \frac1h \fint_U \a \nabla u \cdot \bigl( (\Phi'(u+h) - \Phi'(u)) \nabla u \varphi + (\Phi(u+h) - \Phi(u)) \nabla \varphi \bigr)
  \notag \\ &
  \geq \lim_{h\downarrow 0}  \frac1h
  \fint_U \a \nabla u \cdot (\Phi(u+h) - \Phi(u)) \nabla \varphi
  =
  \fint_U \a \nabla \Phi(u) \cdot \nabla \varphi
  \,.
\end{align*}
The above testing can be made rigorous using Lemma~\ref{l.product.rule}.
Therefore,~$\Phi(u) \in \baconsub(U)$. From this calculation we obtain that~$\baconconv(U) \subseteq \baconsub(U) \cup (-\baconsub\!(U))$. Furthermore, it is clear that both $\bacontrunc(U)$ and $\baconpow(U)$ are subsets of $\baconconv(U)$. Also, each of the above cones belongs to the maximal cone~$ \baconmax(U)= H^1_\a(U)$. 

\subsection{Coarse-grained matrices}

Consider a Lipschitz domain~$U\subseteq \Rd$ and a cone~$\bacon(U) \in \mathcal{C}(U)$.
For each~$p,q\in\Rd$, we define
\begin{equation*}
  \a(U,p\, ;\bacon)  \coloneqq  \sup_{u \in \bacon(U)} \fint_{U} \Bigl( - \nabla u \cdot \a \nabla u + 2 p \cdot \a \nabla u \Bigr)
\end{equation*}
and
\begin{equation*}
  \b(U,q\, ;\bacon) \coloneqq \sup_{u \in \bacon(U)} \fint_{U} \Bigl( -  \nabla u \cdot \a \nabla u + 2 q \cdot \nabla u \Bigr)\,.
\end{equation*}
Clearly~$ \a(U,p\, ;\bacon) $ and~$\b(U,q\,;\bacon)$ are increasing with respect to the cones.

\smallskip

In the case~$\bacon =   \baconsol$, coarse-graining operators are the central objects in the theory developed in~\cite{AKM.Book,AK.HC}. Then there exist symmetric positive matrices~$\a(U)$ and~$\a_*(U)$ such that, for every~$p,q\in \Rd$,
\begin{equation*}
  \a(U,p\, ;\baconsol)
  = p \cdot \a(U) p
  \quad \mbox{and} \quad
  \b(U,q\,;\baconsol)
  = q \cdot \a_*^{-1}(U) q
  \,,
\end{equation*}
that is, they are quadratic functions of~$p$ and~$q$, respectively. Moreover, we have the following ordering for the matrices:
\begin{equation}
  \label{e.a.vs.astar}
  \a_*(U)
  \leq
  \a(U)
  \,.
\end{equation}
Since~$\baconsol(U) \subseteq H_\a^1(U)$ and, by the first variation, the maximizer of~$\b(U,q\, ; H_\a^1(U))$ belongs to~$\baconsol(U)$, we in fact have
\begin{equation*}
  \b(U,q\,;\baconsol)
  \leq
  \b(U,q\,;\bacon)
  \leq
  \b(U,q\,;H_\a^1(U)) =
  \b(U,q\,;\baconsol)
\end{equation*}
for each~$\bacon(U)\in \mathcal{C}(U)$.
Thus, the inequalities above are actually equalities and, hence, for every~$q \in \Rd$ and for each~$\bacon(U)\in \mathcal{C}(U)$,
\begin{equation}
  \label{e.bacon.astar}
  \b(U,q\, ;\bacon)  = q \cdot \a_*^{-1}(U) q
  \,.
\end{equation}
The distinction between the different cones~$\bacon$ is relevant only for~$\a(U,p\,;\bacon)$, and therefore we will drop~$\b(U,q\, ;\bacon)$ from the notation in favor of~$\a_*^{-1}(U)$.
We define the norm of~$p \mapsto \a(U,p\, ;\bacon)$ by
\begin{equation}
  \| \a(U;\bacon)  \| \coloneqq \sup_{|p|=1} \a(U,p\, ;\bacon)
  \,.
\end{equation}

\smallskip

The coarse-grained operators allow us to control the average fluxes and gradients of functions in the cones $\bacon(U)$ in terms of their energy. This is one of the cornerstones of coarse-graining theory developed in~\cite{AKM.Book,AK.HC}.

\begin{lemma}\label{l.cone.homogeneity}
  Let~$\bacon(U)\in \mathcal{C}(U)$.
  We have the following Rayleigh quotient representations: for every~$p,q\in\Rd$,
  \begin{equation}
    \label{e.Rayleigh}
    \a(U,p\, ;\bacon) = \sup_{u \in \bacon(U)} \frac{(\fint_U \a \nabla u \cdot p)_+^2}{ \fint_U \nabla u \cdot \a \nabla u}
    \quad \mbox{and} \quad
    q\cdot \a_*^{-1}(U) q = \sup_{u \in \bacon(U)} \frac{(\fint_U \nabla u \cdot q)^2}{ \fint_U \nabla u \cdot \a \nabla u}
    \,.
  \end{equation}
  In particular, the map~$p \mapsto \a(U, p\, ;\bacon(U))$ is homogeneous of degree~$2$, that is,
  \begin{equation}
    \label{e.a.homogeneity}
    \a(U, t p\, ;\bacon) = t^2 \a(U,p\, ;\bacon)
    \,.
  \end{equation}
  The coarse-grained quantities satisfy, for every~$p\in\Rd$, the bounds
  \begin{equation}
    \label{e.a.norm.bounds}
    p \cdot \bigl( \a^{-1} \bigr)_U^{-1} p
    \leq
    p\cdot \a_*(U) p
    \leq
    \a(U,p;\bacon)
    \leq
    p \cdot \bigl( \a\bigr)_U p
    \,.
  \end{equation}
  Finally, for every~$u\in \bacon(U)$, we have
  \begin{equation}
    |(\a \nabla u)_{U}|
    \leq
    \| \a(U;\bacon)  \|^{\nicefrac 12}  \| \a^{\nicefrac12} \nabla u \|_{\underline{L}^2(U)}
    \qquad \mbox{and} \qquad
    |(\nabla u)_{U}|  \leq | \a_*^{-1}(U)  |^{\nicefrac 12}  \| \a^{\nicefrac12} \nabla u \|_{\underline{L}^2(U)}
    \,.
    \label{e.coarse.graining.ineq}
  \end{equation}
\end{lemma}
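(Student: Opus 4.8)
The plan is to reduce every supremum in the statement to a one-parameter optimization along rays of the cone, using only the cone property~(i) together with the facts already recorded before the lemma.

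First I would establish the Rayleigh representations~\eqref{e.Rayleigh}. Fix $u\in\bacon(U)$ and $p\in\Rd$; since $tu\in\bacon(U)$ for all $t\ge0$, the functional $v\mapsto\fint_U(-\nabla v\cdot\a\nabla v+2p\cdot\a\nabla v)$ evaluated along the ray $\{tu\}$ becomes the concave quadratic $t\mapsto-t^2\fint_U\nabla u\cdot\a\nabla u+2t\fint_U p\cdot\a\nabla u$. When $\fint_U\nabla u\cdot\a\nabla u>0$, its maximum over $t\ge0$ equals $(\fint_U\a\nabla u\cdot p)_+^2/\fint_U\nabla u\cdot\a\nabla u$; when $\fint_U\nabla u\cdot\a\nabla u=0$, we have $\a^{\nicefrac12}\nabla u=0$ a.e., hence $\nabla u=0$ a.e. since $\a$ is a.e. invertible, so the ray contributes $0$, consistent with the convention $0/0:=0$. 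Taking the supremum over $u$ as well (every cone element already occurs at $t=1$) gives the first identity. The same computation produces the analogue with a positive part for $\b(U,q\,;\bacon)$; but $\b(U,q\,;\bacon)=q\cdot\a_*^{-1}(U)q$ by~\eqref{e.bacon.astar} is even in $q$, so applying the identity also at $-q$ and using $x^2=\max\{x_+^2,(-x)_+^2\}$ removes the positive part, giving the second identity. Homogeneity~\eqref{e.a.homogeneity} for $t\ge0$ is then immediate from the first identity and $(tx)_+=tx_+$, the case $t=0$ being clear because $0\in\bacon(U)$.

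Next I would prove the chain~\eqref{e.a.norm.bounds}. The two outer bounds come from weighted Cauchy--Schwarz inside the Rayleigh quotients: $\fint_U\a\nabla u\cdot p=\fint_U(\a^{\nicefrac12}\nabla u)\cdot(\a^{\nicefrac12}p)$ bounds every term of the quotient defining $\a(U,p\,;\bacon)$ by $p\cdot(\a)_Up$, and $\fint_U\nabla u\cdot q=\fint_U(\a^{\nicefrac12}\nabla u)\cdot(\a^{-\nicefrac12}q)$ bounds every term of the quotient defining $q\cdot\a_*^{-1}(U)q$ by $q\cdot(\a^{-1})_Uq$, so $\a_*^{-1}(U)\le(\a^{-1})_U$, which inverts to $(\a^{-1})_U^{-1}\le\a_*(U)$. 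The middle bound $p\cdot\a_*(U)p\le\a(U,p\,;\bacon)$ follows from the ordering $\a_*(U)\le\a(U)$ in~\eqref{e.a.vs.astar}, the identity $\a(U,p\,;\baconsol)=p\cdot\a(U)p$, and the monotonicity of $\a(U,p\,;\cdot)$ in the cone together with $\baconsol(U)\subseteq\bacon(U)$. Checking this last inclusion uniformly over all admissible cone families is the one step I expect to require care---for instance for $\bacontrunc(U)$ one uses that a solution translated by a constant stays in the cone and has the same gradient, so passing to such translates leaves the Rayleigh quotient unchanged and cannot lower the supremum; everything else here is routine.

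Finally, for the coarse-graining inequalities~\eqref{e.coarse.graining.ineq} I would specialize the Rayleigh identities to a suitable vector. Given $u\in\bacon(U)$, take $p\coloneqq(\a\nabla u)_U$ in the first identity: then $(\a\nabla u)_U\cdot p=|(\a\nabla u)_U|^2\ge0$, and bounding $\a(U,p\,;\bacon)\le\|\a(U;\bacon)\|\,|p|^2$ via homogeneity and the definition of $\|\a(U;\bacon)\|$ yields $|(\a\nabla u)_U|^4\le\|\a(U;\bacon)\|\,|(\a\nabla u)_U|^2\,\|\a^{\nicefrac12}\nabla u\|_{\underline{L}^2(U)}^2$, which gives the first inequality after dividing and taking square roots. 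Taking instead $q\coloneqq(\nabla u)_U$ in the second identity and using $q\cdot\a_*^{-1}(U)q\le|\a_*^{-1}(U)|\,|q|^2$ gives the second inequality in the same way; the degenerate cases $(\a\nabla u)_U=0$ and $(\nabla u)_U=0$ are trivial.
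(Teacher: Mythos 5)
Your proposal is correct and takes essentially the same approach as the paper: one-parameter optimization along the rays $\{tu\}_{t\ge0}$ of the cone gives the Rayleigh representations, weighted Cauchy--Schwarz gives the outer bounds in~\eqref{e.a.norm.bounds}, and specializing the Rayleigh quotient (equivalently, taking the supremum over $|p|=1$, $|q|=1$ as the paper does) gives~\eqref{e.coarse.graining.ineq}; you also usefully fill in the evenness argument removing the positive part in the second Rayleigh identity, which the paper treats as tacit. One small imprecision: for the middle bound via $\baconsol(U)\subseteq\bacon(U)$, your remark about $\bacontrunc(U)$ should be a limiting statement rather than an exact inclusion---since $\nabla(u-k)_+=\indc_{\{u>k\}}\nabla u$, by dominated convergence the Rayleigh quotient of $(u-k)_+$ tends to that of $u$ as $k\to-\infty$, whereas $(u-k)_+$ equals a translate of $u$ only if $u$ is bounded below---and the analogous verification for $\baconpow(U)$, whose elements are built only from nonnegative solutions, is similarly not quite routine, although the paper itself also leaves the middle inequality implicit.
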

\begin{proof}
  For any fixed~$t \in \R_+$ and~$u \in \bacon(U)$ we have, since $\bacon(U)$ is a cone, that~$t u \in \bacon(U)$. Thus,
  \begin{equation*}
    \fint_{U} \Bigl( - t^2 \nabla u \cdot \a \nabla u + 2 t p \cdot \a \nabla u \Bigr)
  \end{equation*}
  is maximized at
  \begin{equation*}
    t = \frac{\bigl (\fint_U \a \nabla u \cdot p\bigr)_+}{\fint_U \nabla u \cdot \a \nabla u}
    \,.
  \end{equation*}
  Plugging this back in we get
  \begin{equation*}
    \sup_{t \in \R_+} \fint_{U} \Bigl( -t^2 \nabla u \cdot \a \nabla u + 2 t p \cdot \a \nabla u \Bigr)
    =
    \frac{(\fint_U \a \nabla u \cdot p)_+^2}{\fint_U \nabla u \cdot \a \nabla u}
    \,.
  \end{equation*}
  That is, we can rewrite
  \begin{equation*}
    \a(U,p\, ;\bacon)  = \sup_{u \in \bacon(U)} \frac{(\fint_U \a \nabla u \cdot p)_+^2}{\fint_U \nabla u \cdot \a \nabla u} \,.
  \end{equation*}
  From the above it is immediate that $ \a(U,tp\, ;\bacon) = t^2  \a(U,p\, ;\bacon)$ for every $t \in \R_+$. The last and the first norm bounds in~\eqref{e.a.norm.bounds} follow by H\"older's inequality and~\eqref{e.Rayleigh}.  Finally,~\eqref{e.coarse.graining.ineq} is an immediate consequence of~\eqref{e.Rayleigh} after taking supremum over~$|p|=1$ and~$|q|=1$.
\end{proof}
Finally, we note the following subadditivity property of the coarse-grained operators: Let $U_1,\ldots,U_n$ be a partition of $U$, then
\begin{equation} \label{e.subadditivity}
  \| \a(U;\bacon)  \| \leq \sum_{i=1}^n \frac{|U_i|}{|U|} \| \a(U_i;\bacon)  \|
  \quad \mbox{and} \quad
  \a_*^{-1}(U)  \leq \sum_{i=1}^n \frac{|U_i|}{|U|}\a_*^{-1}(U_i)\,.
\end{equation}
This is an immediate consequence of the definition of the assumption that the cone family is stable under restrictions to subsets.

\subsection{Coarse-grained ellipticity}
\label{ss.coarse.grained.ellipticity}

For each~$s \in (0,\infty)$ we set~$\cs_s \coloneqq 1-3^{-s}$.

\begin{definition}[Coarse-grained ellipticity]
  \label{def.cg.ellipticity}
  Let $\bacon(U)\in \mathcal{C}(U)$. For every $s \in (0,\infty)$, $q \in [1,\infty)$, $m \in \Z$, we define the multiscale Besov-type ellipticity constants as
  \begin{equation}\label{eq:besov-norms}
    \left\{
    \begin{aligned}
       & \Lambda_{s}(\cu_m \,; \bacon)
      \coloneqq
      \biggl( \cs_s \sum_{k=-\infty}^m 3^{-s(m-k)} \max_{z \in 3^k \Z^d \cap \cu_m} \| \a(z+\cu_k;\bacon)  \|^{\nicefrac{1}{2}} \biggr)^{\! 2}\,,
      \\ &
      \lambda_{s}(\cu_m)
      \coloneqq
      \biggl( \cs_s \sum_{k=-\infty}^m 3^{-s(m-k)} \max_{z \in 3^k \Z^d \cap \cu_m} | \a_*^{-1}(z+\cu_k) |^{\nicefrac{1}{2}} \biggr)^{\! -2}\,.
    \end{aligned}
    \right.
  \end{equation}
  We define, for each~$s,t\in (0,1)$ with~$s+t<1$, the \emph{coarse-grained ellipticity ratio} to be
  \begin{equation}
    \Theta_{s,t}(\cu_m; \bacon)
    \coloneqq \frac{\Lambda_s(\cu_m; \bacon)}{\lambda_t(\cu_m)} \,.
    \label{e.cg.ellipticity.ratio}
  \end{equation}
  We say that the coefficient field~$\a(\cdot)$ is \emph{coarse-grained elliptic in~$\cu_m$} if there exist~$s,t\in(0,1)$ such that~$s+t<1$ and~$\Theta_{s,t}(\cu_m; \baconpow \cup \bacontrunc) < \infty$.
\end{definition}

The ellipticity constants $\Lambda_s(\cu_m;\bacon)$ and $\lambda_s(\cu_m)$ are monotone in $s$ but in opposite directions. We have, for every $0 < s\leq t \leq 1$, and cones~$\tilde{\bacon} \subseteq \bacon$ as in \cref{e.admissible.cones},
\begin{equation}\label{e.ellipticity.monotone}
  \lambda_s(\cu_m)
  \leq
  \lambda_t(\cu_m)
  \leq
  \Lambda_t(\cu_m;\tilde{\bacon})
  \leq
  \Lambda_s(\cu_m;\bacon)
  \,.
\end{equation}
This is immediate from~\eqref{e.a.norm.bounds} and the definitions.
In particular,~$\Theta_{s,t}(\cu_m; \bacon)  \geq 1$ for all~$s,t$. We also note that the ellipticity constants satisfy the scaling bounds, for any $k \leq m$,
\begin{equation}
  \label{e.ellipticity.scales}
  \left\{
  \begin{aligned}
     &
    \max_{z \in 3^k \Z^d \cap \cu_m} \Lambda_s(z+\cu_k; \bacon)
    \leq
    3^{2s(m-k)} \Lambda_s(\cu_m; \bacon)\,,
    \\ &
    \max_{z \in 3^k \Z^d \cap \cu_m} \lambda_s^{-1}(z+\cu_k)
    \leq
    3^{2s(m-k)} \lambda_s^{-1}(\cu_m)\,.
  \end{aligned}
  \right.
\end{equation}
Similarly, for every~$k \in \Z$ with~$k \leq m$ and for cones~$\tilde{\bacon} \subseteq \bacon$,
\begin{equation}
  \label{e.Theta.scaling}
  \max_{z \in 3^k \Z^d \cap \cu_m}\Theta_{s,t}(z+\cu_k; \tilde{\bacon}) \leq 3^{2(s+t)(m-k)} \Theta_{s,t}(\cu_m; \bacon).
\end{equation}
We will often drop the dependence on the cone~$\bacon$ in the notation, when it is clear from the context.

We end this section with a lemma giving sufficient conditions for coarse-grained ellipticity in terms of negative Sobolev regularity of the coefficient field and its inverse.

\begin{lemma}
  Let $p,q \in (1,\infty)$, $\alpha,\beta \in [0,1)$ satisfy $\alpha < 1 - 1/p$, $\beta < 1 - 1/q$ and assume that
  \begin{equation}
    \label{e.sigma.condition}
    \tilde \sigma \coloneqq 1 - \frac{d}{2}\left(\frac{1}{p}+\frac{1}{q}\right) - \frac{\alpha+\beta}{2} > 0.
  \end{equation}
  Fix $\varepsilon \in (0,1-\tilde \sigma)$. Then there exist~$s,t \in (0,1]$ such that $s+t = 1 - \varepsilon/2$, and
  \begin{equation}
    \sigma_1 \coloneqq s-\tfrac{\alpha}{2}-\tfrac{d}{2p} > 0,
    \quad
    \sigma_2 \coloneqq t-\tfrac{\beta}{2}-\tfrac{d}{2q} > 0.
  \end{equation}
  If $\a \in L^1(\cu_0) \cap W^{-\alpha,p}(\cu_0)$, and $\a^{-1} \in L^1(\cu_0) \cap W^{-\beta,q}(\cu_0)$, then $\a$ is coarse-grained elliptic in~$\cu_0$ with parameters~$(s,t)$ (in particular $\Theta_{s,t}(\cu_0; \baconpow \cup \bacontrunc) \leq \Theta_{s,t}(\cu_0;\baconmax) < \infty$) and
  \begin{equation}
    \label{e.ellipticity.ratio.bound}
    \Theta_{s,t}(\cu_0; \baconmax)
    \leq \frac{\cs_s^2 \cs_t^2}{\cs_{\sigma_1}^2\cs_{\sigma_2}^2}
    \Bigg(\frac{C}{\alpha\,p'\,(1-\alpha p')}\Bigg)^{\!\frac{1}{p'}}
    \Bigg(\frac{C}{\beta\,q'\,(1-\beta q')}\Bigg)^{\!\frac{1}{q'}}
    \|\a\|_{W^{-\alpha,p}(\cu_0)}
    \|\a^{-1}\|_{W^{-\beta,q}(\cu_0)},
  \end{equation}
  where $C(d)<\infty$.
\end{lemma}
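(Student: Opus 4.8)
The plan is to reduce the statement to two applications of the key estimate in Lemma~\ref{l.weak.Wsp.bounds.Besov.circ}, one for $\a$ and one for $\a^{-1}$, together with the definitions~\eqref{eq:besov-norms}. First I would record the algebra of exponents: given $\tilde\sigma>0$ as in~\eqref{e.sigma.condition} and a fixed $\varepsilon\in(0,1-\tilde\sigma)$, one can choose $s,t\in(0,1]$ with $s+t=1-\varepsilon/2$ and with both $\sigma_1=s-\tfrac{\alpha}{2}-\tfrac{d}{2p}>0$ and $\sigma_2=t-\tfrac{\beta}{2}-\tfrac{d}{2q}>0$; the point is that $\sigma_1+\sigma_2 = (s+t)-\tfrac{\alpha+\beta}{2}-\tfrac{d}{2}(\tfrac1p+\tfrac1q) = \tilde\sigma-\tfrac{\varepsilon}{2}>0$, so the two positivity constraints can be split between $s$ and $t$, e.g.\ by taking $s$ close enough to $\tfrac{1-\varepsilon/2}{2}+\tfrac{\alpha/2+d/(2p)-\beta/2-d/(2q)}{2}$. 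I would also note $\alpha<1-1/p$ is exactly $\alpha p'<1$, so the constant $(\alpha p'(1-\alpha p'))^{-1}$ appearing below is finite, and similarly for $\beta,q$.

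Next I would estimate $\Lambda_s(\cu_0;\baconmax)$. By~\eqref{e.a.norm.bounds} with the maximal cone, $\|\a(z+\cu_k;\baconmax)\|\le |(\a)_{z+\cu_k}|$, so using the spectral norm bound $|(\a)_{z+\cu_k}| \le \max_i |(\a_{ii})_{z+\cu_k}| + (\text{off-diagonal terms})$ — or more cleanly, bounding $\|\a(z+\cu_k;\baconmax)\|^{1/2}$ entrywise — one gets
\begin{equation*}
  \Lambda_s(\cu_0;\baconmax)^{1/2}
  = \cs_s \sum_{k=-\infty}^0 3^{sk}\max_{z\in 3^k\Z^d\cap\cu_0}\|\a(z+\cu_k;\baconmax)\|^{1/2}
  \le C(d)\,\cs_s \sum_{k=-\infty}^0 3^{sk}\max_{z}\,|(\a)_{z+\cu_k}|^{1/2},
\end{equation*}
which is precisely the $q=\infty$ left-hand side of~\eqref{e.weak.Wsp.bounds.Besov.circ} applied to $f=\a$ (componentwise) with the triple $(s,\alpha,p)$ in place of $(s,t,p)$ there — the hypothesis $t<2s+d/q-d/p$ of that lemma becomes, for $q=\infty$, the requirement $\alpha<2s-d/p$, i.e.\ $\sigma_1>0$, which we arranged. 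Hence
\begin{equation*}
  \Lambda_s(\cu_0;\baconmax)^{1/2}
  \le \frac{\cs_s}{\cs_{\sigma_1}}\Bigl(\frac{C}{\alpha p'(1-\alpha p')}\Bigr)^{\!\frac{1}{2p'}}[\a]_{\Wminusul{-\alpha}{p}(\cu_0)}^{1/2}
  \le \frac{\cs_s}{\cs_{\sigma_1}}\Bigl(\frac{C}{\alpha p'(1-\alpha p')}\Bigr)^{\!\frac{1}{2p'}}\|\a\|_{W^{-\alpha,p}(\cu_0)}^{1/2},
\end{equation*}
after converting the volume-normalized seminorm to the ordinary $W^{-\alpha,p}(\cu_0)$ norm (a fixed change of normalization on the unit cube, absorbed into $C(d)$). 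Symmetrically, $\lambda_t(\cu_0)^{-1/2} = \cs_t\sum_{k\le 0}3^{tk}\max_z |\a_*^{-1}(z+\cu_k)|^{1/2}$, and by~\eqref{e.a.norm.bounds} $|\a_*^{-1}(z+\cu_k)|\le |(\a^{-1})_{z+\cu_k}|$, so the same lemma with $q=\infty$ applied to $f=\a^{-1}$ and the triple $(t,\beta,q)$ — valid since $\sigma_2>0$ — gives
\begin{equation*}
  \lambda_t(\cu_0)^{-1/2}
  \le \frac{\cs_t}{\cs_{\sigma_2}}\Bigl(\frac{C}{\beta q'(1-\beta q')}\Bigr)^{\!\frac{1}{2q'}}\|\a^{-1}\|_{W^{-\beta,q}(\cu_0)}^{1/2}.
\end{equation*}

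Multiplying the two displays and squaring yields
\begin{equation*}
  \Theta_{s,t}(\cu_0;\baconmax) = \Lambda_s(\cu_0;\baconmax)\,\lambda_t(\cu_0)^{-1}
  \le \frac{\cs_s^2\cs_t^2}{\cs_{\sigma_1}^2\cs_{\sigma_2}^2}\Bigl(\frac{C}{\alpha p'(1-\alpha p')}\Bigr)^{\!\frac{1}{p'}}\Bigl(\frac{C}{\beta q'(1-\beta q')}\Bigr)^{\!\frac{1}{q'}}\|\a\|_{W^{-\alpha,p}(\cu_0)}\|\a^{-1}\|_{W^{-\beta,q}(\cu_0)},
\end{equation*}
which is~\eqref{e.ellipticity.ratio.bound}; the finiteness of the right-hand side (using $\a\in W^{-\alpha,p}$, $\a^{-1}\in W^{-\beta,q}$, together with $\a,\a^{-1}\in L^1$ so that $\a_*(z+\cu_k)$ is well-defined and positive on every subcube) gives $\Theta_{s,t}(\cu_0;\baconmax)<\infty$, and since $\baconpow\cup\bacontrunc\subseteq\baconmax$, monotonicity~\eqref{e.ellipticity.monotone} gives $\Theta_{s,t}(\cu_0;\baconpow\cup\bacontrunc)\le\Theta_{s,t}(\cu_0;\baconmax)<\infty$, i.e.\ $\a$ is coarse-grained elliptic in $\cu_0$ with parameters $(s,t)$. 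The main subtlety, and the step I would be most careful about, is the exponent bookkeeping in the first paragraph: one must verify that the single free parameter (say $s$) genuinely admits a choice making $\sigma_1>0$, $\sigma_2>0$, $s,t\in(0,1]$, and the $\Wminusul{}{}$ hypotheses $\alpha p'<1$, $\beta q'<1$ all simultaneously hold — this is where the precise form of~\eqref{e.sigma.condition} and the constraints $\alpha<1-1/p$, $\beta<1-1/q$ are used — and then to match the $q=\infty$ branch of Lemma~\ref{l.weak.Wsp.bounds.Besov.circ} to the $\max$ over $z$ appearing in~\eqref{eq:besov-norms}, as opposed to the $\ell^q$-average branch. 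Everything else is routine: monotonicity of the coarse-grained matrices in the cone, the normalization identities on the unit cube, and collecting constants.
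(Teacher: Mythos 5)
Your proposal is correct and follows essentially the same route as the paper's own proof: you bound $\|\a(\cu;\baconmax)\|$ and $|\a_*^{-1}(\cu)|$ by the local averages $|(\a)_\cu|$ and $|(\a^{-1})_\cu|$ via~\eqref{e.a.norm.bounds}, invoke the $q=\infty$ branch of Lemma~\ref{l.weak.Wsp.bounds.Besov.circ} separately for $\a$ and $\a^{-1}$, and multiply the resulting bounds on $\Lambda_s$ and $\lambda_t^{-1}$; your explicit check that the lemma's hypothesis at $q=\infty$ reduces exactly to $\sigma_1>0$ (respectively $\sigma_2>0$) makes the exponent bookkeeping, which the paper leaves terse, clearer. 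One remark for your records: the positivity you invoke, $\sigma_1+\sigma_2=\tilde\sigma-\varepsilon/2>0$, requires $\varepsilon<2\tilde\sigma$, which does not follow from the stated hypothesis $\varepsilon\in(0,1-\tilde\sigma)$ when $\tilde\sigma<\nicefrac{1}{3}$ --- this appears to be a slip in the lemma statement itself, shared by the paper's own proof, rather than a gap in your argument.
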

\begin{proof}
  Let $\tilde\sigma$ be as in~\eqref{e.sigma.condition} and pick any $\varepsilon\in(0,1-\tilde\sigma)$. Choose $s,t\in(0,1]$ with
  \begin{equation*}
    s+t=1-\frac{\varepsilon}{2}
  \end{equation*}
  and so close to this split that
  \begin{equation*}
    \sigma_1\coloneqq s-\tfrac{\alpha}{2}-\tfrac{d}{2p}>0,\qquad
    \sigma_2\coloneqq t-\tfrac{\beta}{2}-\tfrac{d}{2q}>0,
  \end{equation*}
  which is possible because~\eqref{e.sigma.condition}. We now bound the two multiscale quantities appearing in the definition of $\Theta_{s,t}(\cu_0;\bacon)$. Notice that, by the definition of~$\Lambda_s$ in~\eqref{eq:besov-norms} (with $\cu_{0}$ replacing $\cu_{m}$), and~\eqref{e.a.norm.bounds}, we have
\begin{align} \label{Lam}
\Lambda_{s}(\cu_0 \,; \bacon)
& = 
\biggl( \cs_s \sum_{k=-\infty}^0 3^{sk} \max_{z \in 3^k \Z^d \cap \cu_0} \| \a(z+\cu_k;\bacon)  \|^{\nicefrac{1}{2}} \biggr)^{\! 2}
\notag \\ &
\leq
\biggl( \cs_s \sum_{k=-\infty}^0 3^{sk} \max_{z \in 3^k \Z^d \cap \cu_0} \bigl| (\a)_{z+\cu_k} \bigr|^{\nicefrac{1}{2}} \biggr)^{\! 2}
\,,
\end{align}
therefore, by~\eqref{Lam} and the triangle/monotonicity relations it is enough to control the scale-weighted maxima of the averages of $\a$ and of $\a^{-1}$. For the field $\a$ we apply Lemma~\ref{l.weak.Wsp.bounds.Besov.circ} to obtain the discrete estimate
  \begin{equation*}
    \cs_s \sum_{k=-\infty}^0 3^{s k}\max_{z\in 3^k\Z^d\cap\cu_0}\bigl|( \a)_{z+\cu_k}\bigr|^{\frac12}
    \le
    \frac{\cs_s}{\cs_{\sigma_1}}\Bigg(\frac{C}{\alpha\,p'\,(1-\alpha p')}\Bigg)^{\!\frac{1}{2p'}}\|\a\|_{W^{-\alpha,p}(\cu_0)}^{\frac12},
  \end{equation*}
  where $p'=\tfrac{p}{p-1}$ and $C=C(d)$. Squaring and recalling the definition~\eqref{eq:besov-norms} yields
  \begin{equation*}
    \Lambda_s(\cu_0;\bacon)\le
    \frac{\cs_s^2}{\cs_{\sigma_1}^2}\Bigg(\frac{C}{\alpha\,p'\,(1-\alpha p')}\Bigg)^{\!\frac{1}{p'}}\|\a\|_{W^{-\alpha,p}(\cu_0)}.
  \end{equation*}

  The same argument applied to $\a^{-1}$ (with parameters $\beta,q$ and the exponent $t$) gives
  \begin{equation*}
    \lambda_t^{-1}(\cu_0)
    \le
    \frac{\cs_t^2}{\cs_{\sigma_2}^2}\Bigg(\frac{C}{\beta\,q'\,(1-\beta q')}\Bigg)^{\!\frac{1}{q'}}\|\a^{-1}\|_{W^{-\beta,q}(\cu_0)}.
  \end{equation*}
  Multiplying the two displayed bounds and using the definition~\eqref{e.cg.ellipticity.ratio} of $\Theta_{s,t}$ yields the claimed estimate~\eqref{e.ellipticity.ratio.bound}. Finally, since the right-hand side is finite under the hypotheses, $\Theta_{s,t}(\cu_0;\baconmax)<\infty$, so $\a$ is coarse-grained elliptic with the chosen $(s,t)$.
\end{proof}

\section{Coarse-grained functional and elliptic inequalities}

\subsection{Coarse-grained Sobolev-Poincar\'e inequality}
\label{ss.functional}

In this subsection, we prove the following proposition, which states that~$H^1_\a$ embeds into~$L^{2^\ast_s}$,
where
\begin{equation*}
  2^\ast_s \coloneqq \frac{2d}{d-2(1-s)}
  \,.
\end{equation*}

\begin{proposition}
  \label{p.sobolev.poincare}
  Let $s \in (0,1]$, $n \in \Z$. There exists~$C(d) < \infty$ such that, for every~$u \in H_\a^1(\cu_n)$,
  \begin{equation*}
    3^{-n} \|u-(u)_{\cu_n}\|_{\underline{L}^{{2^\ast_s}}(\cu_n)} \leq Cs^{-1} \lambda_s^{-\nicefrac{1}{2}}(\cu_n)  \| \a^{\nicefrac12} \nabla u \|_{\underline{L}^2(\cu_n)}
    \,.
  \end{equation*}
\end{proposition}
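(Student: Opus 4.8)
The plan is to combine the Besov Sobolev--Poincar\'e inequality of Lemma~\ref{l.besov.poincare} with the coarse-graining inequality~\eqref{e.coarse.graining.ineq} for $\a_*^{-1}$, localized on each triadic subcube. Starting from~\eqref{e.besov.poincare}, I would write
\begin{equation*}
  \|u-(u)_{\cu_n}\|_{\underline{L}^{2^\ast_s}(\cu_n)}
  \leq
  C 3^{(1-s)n} \|\nabla u\|_{\BesovDualSum{2}{1}{-s}(\cu_n)}
  =
  C 3^{(1-s)n} \sum_{k=-\infty}^n 3^{sk}
  \biggl( \avsum_{z \in 3^k \Z^d \cap \cu_n} |(\nabla u)_{z+\cu_k}|^{2} \biggr)^{\!\nicefrac{1}{2}}.
\end{equation*}
On each cube $z + \cu_k$, the second inequality in~\eqref{e.coarse.graining.ineq}, applied to $u|_{z+\cu_k} \in \baconmax(z+\cu_k) = H^1_\a(z+\cu_k)$, gives $|(\nabla u)_{z+\cu_k}| \leq |\a_*^{-1}(z+\cu_k)|^{\nicefrac12}\|\a^{\nicefrac12}\nabla u\|_{\underline{L}^2(z+\cu_k)}$. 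Bounding $|\a_*^{-1}(z+\cu_k)|^{\nicefrac12}$ by $\max_{z \in 3^k\Z^d \cap \cu_n}|\a_*^{-1}(z+\cu_k)|^{\nicefrac12}$ and pulling it out of the average, the remaining $\avsum_{z}\|\a^{\nicefrac12}\nabla u\|_{\underline{L}^2(z+\cu_k)}^2$ telescopes exactly to $\|\a^{\nicefrac12}\nabla u\|_{\underline{L}^2(\cu_n)}^2$ since the $z+\cu_k$ partition $\cu_n$.

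Carrying this through, the right-hand side becomes
\begin{equation*}
  C 3^{(1-s)n}\|\a^{\nicefrac12}\nabla u\|_{\underline{L}^2(\cu_n)}
  \sum_{k=-\infty}^n 3^{sk} \max_{z \in 3^k \Z^d \cap \cu_n} |\a_*^{-1}(z+\cu_k)|^{\nicefrac12}
  = C 3^{n}\|\a^{\nicefrac12}\nabla u\|_{\underline{L}^2(\cu_n)}\, \cs_s^{-1}\lambda_s^{-\nicefrac12}(\cu_n),
\end{equation*}
where the last equality is just the definition~\eqref{eq:besov-norms} of $\lambda_s(\cu_n)$ after factoring $3^{-sn}$ out of the sum to match the $3^{-s(n-k)}$ weights. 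Dividing by $3^n$ gives the claimed bound, with the $s^{-1}$ appearing because $\cs_s^{-1} = (1-3^{-s})^{-1} \leq Cs^{-1}$ for $s \in (0,1]$ (from $1 - 3^{-s} \geq c s$).

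The only subtlety I anticipate is a bookkeeping one: Lemma~\ref{l.besov.poincare} is stated with $2^\ast_s = \frac{2d}{d-2(1-s)}$, which requires $d - 2(1-s) > 0$; for $d \geq 3$ this holds for all $s \in (0,1]$, and for $d = 2$ it holds for all $s \in (0,1)$ while $s=1$ gives the $L^{2^\ast}$ with $2^\ast_1 = \infty$ interpreted as any finite exponent via the standard $d=2$ convention, which I would address by a one-line remark (or by noting that the $d=2$, $s=1$ case follows from any $s<1$). A second minor point is verifying that $u|_{z+\cu_k}$ genuinely lies in $H^1_\a(z+\cu_k)$ with finite energy so that~\eqref{e.coarse.graining.ineq} applies on the subcube---this is immediate from $u \in H^1_\a(\cu_n)$ and restriction stability of the maximal cone. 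Beyond these, the argument is a direct chaining of two already-established lemmas, so I expect no genuine obstacle; the main content of the proposition is really the packaging of the multiscale gradient bound into the single constant $\lambda_s^{-\nicefrac12}(\cu_n)$.
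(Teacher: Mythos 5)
Your proof is correct and follows exactly the same route as the paper: combine Lemma~\ref{l.besov.poincare} with a coarse-grained bound on $\|\nabla u\|_{\BesovDualSum{2}{1}{-s}(\cu_n)}$, the latter being precisely Lemma~\ref{l.besov.fluxes.easy}, whose short proof you have simply inlined. (Your side remark about $d=2$, $s=1$ has the edge case backwards—$2^\ast_1=2$ for every $d$, and the denominator $d-2(1-s)=d-2+2s$ is positive for all $s\in(0,1]$ even when $d=2$—but this is immaterial since no degeneracy actually occurs on the stated range of $s$.)
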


The previous proposition is a consequence of Lemma~\ref{l.besov.poincare} and the following statement, which is essentially the same as~\cite[Lemma 2.2]{AK.HC}. While very simple, this lemma is important because it is the way the coarse-grained coefficients enter into our analysis.

\begin{lemma}[Coarse-grained Poincar\'e inequalities] \label{l.besov.fluxes.easy}
  Let~$s \in (0,1]$, $n \in \Z$,~$z \in 3^n \Zd$ and~$\bacon \in \mathcal{C}(z+\cu_n)$.
  For every~$u \in H_\a^1(z{+}\cu_n)$, we have
  \begin{equation}
    \|\nabla u\|_{\BesovDualSum{2}{1}{-s}(z+\cu_n)} \leq \frac{3^{sn}}{\cs_s} \lambda_s^{-\nicefrac{1}{2}}(z+\cu_n) \| \a^{\nicefrac12} \nabla u \|_{\underline{L}^2(z+\cu_n)}
    \,.
  \end{equation}
  For every~$u \in \bacon$,
  \begin{equation}
    \|\a \nabla u\|_{\BesovDualSum{2}{1}{-s}(z+\cu_n)} \leq \frac{3^{sn}}{\cs_s} \Lambda_s^{\nicefrac{1}{2}}(z+\cu_n; \bacon) \| \a^{\nicefrac12} \nabla u \|_{\underline{L}^2(z+\cu_n)}\,.
  \end{equation}
\end{lemma}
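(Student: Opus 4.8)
The plan is to derive both estimates from the single-scale coarse-graining bounds~\eqref{e.coarse.graining.ineq} by summing over triadic subcubes, reading the definitions of~$\lambda_s(\cdot)$ and~$\Lambda_s(\cdot\,;\bacon)$ on the translated cube~$z+\cu_n$ via translation invariance (as is done implicitly already in~\eqref{e.ellipticity.scales}). The only preliminary observation needed is a bookkeeping identity: for every~$k\le n$, the cubes~$y+\cu_k$ with~$y\in 3^k\Zd\cap(z+\cu_n)$ tile~$z+\cu_n$ into~$3^{d(n-k)}$ pieces of equal volume, so that for any~$g\in L^2(z+\cu_n)$ one has~$\avsum_{y}\|g\|_{\underline{L}^2(y+\cu_k)}^2=\|g\|_{\underline{L}^2(z+\cu_n)}^2$; this is used on each scale with~$g=\a^{\nicefrac12}\nabla u$.

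For the gradient estimate, fix~$k\le n$ and a subcube~$y+\cu_k\subseteq z+\cu_n$. Since~$\a_*^{-1}$ is cone-independent---this is exactly~\eqref{e.bacon.astar}, so the second bound in~\eqref{e.coarse.graining.ineq} applies with~$\bacon=\baconmax=H^1_\a$---every~$u\in H^1_\a(z+\cu_n)$ satisfies, on each such subcube,
\[
  \bigl|(\nabla u)_{y+\cu_k}\bigr|^2
  \le \bigl|\a_*^{-1}(y+\cu_k)\bigr|\;\bigl\|\a^{\nicefrac12}\nabla u\bigr\|_{\underline{L}^2(y+\cu_k)}^2 .
\]
Replacing~$|\a_*^{-1}(y+\cu_k)|$ by its maximum over~$y\in 3^k\Zd\cap(z+\cu_n)$, averaging over~$y$, using the volume identity above, and taking the square root bounds the~$k$th term in~$\|\nabla u\|_{\BesovDualSum{2}{1}{-s}(z+\cu_n)}$. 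Multiplying by~$3^{sk}$ and summing over~$k\le n$ leaves the factor
\[
  \sum_{k=-\infty}^n 3^{sk}\max_{y}\bigl|\a_*^{-1}(y+\cu_k)\bigr|^{\nicefrac12}
  = 3^{sn}\sum_{k=-\infty}^n 3^{-s(n-k)}\max_{y}\bigl|\a_*^{-1}(y+\cu_k)\bigr|^{\nicefrac12}
  = \frac{3^{sn}}{\cs_s}\,\lambda_s^{-\nicefrac12}(z+\cu_n),
\]
the last step being the definition of~$\lambda_s$ in~\eqref{eq:besov-norms}. This is the first claimed inequality.

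For the flux estimate I would run the identical argument on~$u\in\bacon$, now invoking restriction stability (property~(ii) of the admissible cones) so that~$u|_{y+\cu_k}\in\bacon(y+\cu_k)$ and the first bound in~\eqref{e.coarse.graining.ineq} may legitimately be applied on each subcube; this replaces~$|\a_*^{-1}(y+\cu_k)|^{\nicefrac12}$ throughout by~$\|\a(y+\cu_k;\bacon)\|^{\nicefrac12}$. Summing over scales and recognizing~$\sum_{k\le n}3^{sk}\max_{y}\|\a(y+\cu_k;\bacon)\|^{\nicefrac12}=3^{sn}\cs_s^{-1}\Lambda_s^{\nicefrac12}(z+\cu_n;\bacon)$ from~\eqref{eq:besov-norms} gives the second inequality.

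There is no genuine analytic difficulty; the lemma is, as the surrounding discussion stresses, elementary, and its role is conceptual rather than technical. The two points that call for care are purely bookkeeping: first, checking that~\eqref{e.coarse.graining.ineq} is genuinely available on every subcube---which rests on cone-independence of~$\a_*^{-1}$ for the gradient bound and on restriction stability of the cone family for the flux bound; and second, matching the discount weight~$3^{-s(n-k)}$ built into~$\lambda_s$ and~$\Lambda_s$ against the weight~$3^{sk}$ in the~$\BesovDualSum{2}{1}{-s}$-norm, the mismatch being precisely the factor~$3^{sn}$, which together with the~$\cs_s$ normalization produces the constant~$3^{sn}/\cs_s$ in the statement.
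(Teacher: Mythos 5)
Your proposal is correct and follows the same route as the paper's proof: apply the single-scale coarse-graining bound~\eqref{e.coarse.graining.ineq} on each triadic subcube $y+\cu_k$, pull the maximum of $|\a_*^{-1}|^{\nicefrac12}$ (resp.\ $\|\a(\cdot;\bacon)\|^{\nicefrac12}$) out of the average, use the tiling identity $\avsum_y \|\a^{\nicefrac12}\nabla u\|_{\underline{L}^2(y+\cu_k)}^2=\|\a^{\nicefrac12}\nabla u\|_{\underline{L}^2(z+\cu_n)}^2$, multiply by $3^{sk}$ and sum over $k\le n$ to recognize the definitions of $\lambda_s$ and $\Lambda_s$. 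Your remarks that the gradient bound needs the cone-independence~\eqref{e.bacon.astar} of $\a_*^{-1}$ while the flux bound needs restriction stability of the cone family make explicit two prerequisites the paper leaves implicit in its "analogous" step.
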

\begin{proof}
  Without loss of generality, we may assume that~$z=0$. Using Lemma~\ref{l.cone.homogeneity}, we get
  \begin{align*}
    \|\nabla u\|_{\BesovDualSum{2}{1}{-s}(\cu_n)} & = \sum_{k=-\infty}^n 3^{sk} \biggl( \avsum_{z \in 3^k \Z^d \cap \cu_n} |(\nabla u)_{z+\cu_k}|^2 \biggr)^{\nicefrac{1}{2}}
    \\
    & \leq \sum_{k=-\infty}^n 3^{sk} \biggl( \avsum_{z \in 3^k \Z^d \cap \cu_n} | \a_*^{-1}(z+\cu_k) | \| \a^{\nicefrac12} \nabla u \|_{\underline{L}^2(z+\cu_k)}^{2} \biggr )^{\nicefrac{1}{2}}
    \\
    & \leq \sum_{k=-\infty}^n 3^{sk} \max_{z \in 3^k \Z^d \cap \cu_n} | \a_*^{-1}(z+\cu_k) |^{\nicefrac 12} \biggl ( \avsum_{z \in 3^k \Z^d \cap \cu_n}  \| \a^{\nicefrac12} \nabla u \|_{\underline{L}^2(z+\cu_k)}^{2} \biggr )^{\nicefrac{1}{2}}
    \\
    & =
    3^{sn} \cs_s^{-1} \lambda_s^{-\nicefrac{1}{2}}(\cu_n) \| \a^{\nicefrac12} \nabla u \|_{\underline{L}^2(\cu_n)}\,.
  \end{align*}
  The proof of the second estimate is analogous.
\end{proof}

\subsection{Coarse-grained Caccioppoli inequality}
\label{ss.cg.caccioppoli}

At the heart of the analysis in this paper is the following generalization of the Caccioppoli inequality to coarse-grained elliptic coefficient fields.

\begin{proposition}[Coarse-grained Caccioppoli inequality]
  \label{p.caccioppoli}
  Let~$s,t \in (0,1)$ with~$\sigma \coloneqq 1-s-t >0$.
  Let~$u\in \mathcal{A}(\cu_0)$ and let $\Phi:\R\to\R$ be a convex function that is monotone on the range of $u$. Assume either $\lim_{t \to 0^+} \frac{\Phi(t)}{\sqrt{|\Phi'(t)|}}=0$ or $\lim_{t \to \infty} \frac{\Phi(t)}{\sqrt{|\Phi'(t)|}}=0$, and in the former (respectively, latter) case assume that there exists~$C_\Phi>0$ such that
  \begin{equation*}\Psi(t)\coloneqq\pm\int_0^t\sqrt{|\Phi'(s)|}\,ds=\pm C_\Phi\frac{\Phi(t)}{\sqrt{|\Phi'(t)|}}\,, \quad \text{resp.}\quad \Psi(t)\coloneqq\mp\int_t^\infty\sqrt{|\Phi'(s)|}\,ds=\pm C_\Phi\frac{\Phi(t)}{\sqrt{|\Phi'(t)|}}\,,
  \end{equation*}
  where the sign $\pm$ is chosen according to the monotonicity of $\Phi$. Let $\bacon(\cu_0) \in \mathcal{C}(\cu_0)$ and assume that~$\Phi(u),\Psi(u)\in\bacon(\cu_0)$. Then there exists a constant $C(d)<\infty$ such that, for every~$\nicefrac12\leq\varrho_1<\varrho_2\leq1$,
  \begin{equation}\|\a^{\nicefrac12}\nabla\Psi(u)\|_{\underline{L}^2(\varrho_1\cu_0)}
    \leq \frac{C (\frac{2}{\sigma})^{\frac{s+\sigma}{\sigma}}\Theta_{s,t}^{\frac{s}{2\sigma}}(\cu_0\,;\bacon)\Lambda_s^{\nicefrac12}(\cu_0\,;\bacon)}{C_\Phi^{\frac{s+\sigma}{\sigma}}s^{\nicefrac{s}{\sigma}}(1-s)^{\nicefrac{s}{\sigma}}(\varrho_2-\varrho_1)^{\frac{s+\sigma}{\sigma}}}
    \|\Psi(u)\|_{\underline{L}^2(\varrho_2\cu_0)}\,.
    \label{e.cg.caccioppoli}
  \end{equation}
\end{proposition}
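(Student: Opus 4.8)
\emph{Step 1 (a coarse-grained Caccioppoli identity).} Set $v:=\Psi(u)\in\bacon(\cu_0)$. The plan is to test $-\nabla\cdot\a\nabla u=0$ with $\psi=\eta^2\Phi(u)\in H^1_{\a,0}(\cu_0)$, $\eta\in C_c^\infty(\cu_0)$ a cutoff. Using the algebraic relations $\nabla v=\sqrt{|\Phi'(u)|}\,\nabla u$ (so $\Phi'(u)\,\nabla u\cdot\a\nabla u=\pm\nabla v\cdot\a\nabla v$) and $\Phi(u)=\pm C_\Phi^{-1}\,v\,\sqrt{|\Phi'(u)|}$ (so $\Phi(u)\,\a\nabla u=\pm C_\Phi^{-1}\,v\,\a\nabla v$), the identity $0=\int_{\cu_0}\nabla\psi\cdot\a\nabla u$ rearranges, after taking absolute values, to
\begin{equation}\label{e.plan.cacc}
  \int_{\cu_0}\eta^2\,\nabla v\cdot\a\nabla v
  \;\le\;\frac{2}{C_\Phi}\Bigl|\int_{\cu_0}\eta\,v\,\nabla\eta\cdot\a\nabla v\Bigr|.
\end{equation}
(The test-function manipulation, which uses $\nabla\Phi(u)=\Phi'(u)\nabla u$ in $H^1_\a$, is made rigorous via Lemma~\ref{l.product.rule}.) Fixing $\tfrac12\le\varrho'<\varrho''\le1$, $\delta:=\varrho''-\varrho'$, and choosing $\eta\equiv1$ on $\varrho'\cu_0$, $\supp\eta\subseteq\varrho''\cu_0$, $|\nabla\eta|\lesssim\delta^{-1}$, $|\nabla^2\eta|\lesssim\delta^{-2}$, the right side of~\eqref{e.plan.cacc} is supported in the annulus $A:=\varrho''\cu_0\setminus\varrho'\cu_0$.

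\emph{Step 2 (localized Besov estimate of the flux term).} I would cover $A$ by a tiling of triadic cubes $Q_i=z_i+\cu_k$ of a single scale $3^k\le\delta$ to be chosen, and on each $Q_i$ use Besov duality~\eqref{e.Besov.dual.norm},~\eqref{e.dual.norms.relation}:
\[
  \Bigl|\int_{Q_i}\eta\,v\,\nabla\eta\cdot\a\nabla v\Bigr|
  \le |Q_i|\,\|\a\nabla v\|_{\BesovDual{2}{1}{-s}(Q_i)}\,\|\eta\,v\,\nabla\eta\|_{\Besov{2}{\infty}{s}(Q_i)}.
\]
For the flux factor, since $v\in\bacon$ restricts to $\bacon(Q_i)$, Lemma~\ref{l.besov.fluxes.easy} together with the scaling bound~\eqref{e.ellipticity.scales} for $\Lambda_s$ — in which the factors $3^{sk}$ and $3^{-sk}$ cancel — gives $\|\a\nabla v\|_{\BesovDual{2}{1}{-s}(Q_i)}\lesssim\cs_s^{-1}\Lambda_s^{\nicefrac12}(\cu_0;\bacon)\,\|\a^{\nicefrac12}\nabla v\|_{\underline L^2(Q_i)}$. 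For the smooth factor, write $\eta\,v\,\nabla\eta=v\,\nabla(\tfrac12\eta^2)$, expand its $\Besov{2}{\infty}{s}(Q_i)$-norm by the product-rule computation in the proof of Lemma~\ref{l.besov.poincare}, and bound $[v]_{\Besov{2}{\infty}{s}(Q_i)}$ and the $\|v\|_{\underline L^2}$-type term by the coarse-grained Sobolev--Poincar\'e inequality (Proposition~\ref{p.sobolev.poincare}) applied on $Q_i$ and its sub-cubes, with the scaling bound~\eqref{e.ellipticity.scales} for $\lambda_t$. The point is that the supremum over sub-scales $3^m$ ($m\le k$) defining $[v]_{\Besov{2}{\infty}{s}(Q_i)}$ becomes, after these substitutions, a supremum of $3^{(1-s-t)m}=3^{\sigma m}$, hence — because $\sigma>0$ — attained at $m=k$; this produces
\[
  \|\eta\,v\,\nabla\eta\|_{\Besov{2}{\infty}{s}(Q_i)}
  \;\lesssim\;\frac{3^{\sigma k}}{t\,\delta}\,\lambda_t^{-\nicefrac12}(\cu_0)\,\|\a^{\nicefrac12}\nabla v\|_{\underline L^2(Q_i)}
  \;+\;\frac{3^{-sk}}{\delta}\,\|v\|_{\underline L^2(Q_i)}.
\]

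\emph{Step 3 (summation, choice of scale, absorption).} Summing over $i$, using $\sum_i|Q_i|\,\|\a^{\nicefrac12}\nabla v\|_{\underline L^2(Q_i)}^2\le\int_{\varrho''\cu_0}\nabla v\cdot\a\nabla v$, $\sum_i|Q_i|\,\|v\|_{\underline L^2(Q_i)}^2\le\int_{\varrho''\cu_0}v^2$, and Cauchy--Schwarz on the mixed term, and inserting into~\eqref{e.plan.cacc},
\[
  \int_{\varrho'\cu_0}\!\!\nabla v\cdot\a\nabla v
  \;\le\;\frac{C\Lambda_s^{\nicefrac12}}{C_\Phi\cs_s}\Bigl[\frac{3^{\sigma k}}{t\,\delta}\lambda_t^{-\nicefrac12}\!\!\int_{\varrho''\cu_0}\!\!\nabla v\cdot\a\nabla v
  +\frac{3^{-sk}}{\delta}\Bigl(\int_{\varrho''\cu_0}\!\!\nabla v\cdot\a\nabla v\Bigr)^{\!\nicefrac12}\!\!\|v\|_{\underline L^2(\varrho''\cu_0)}\Bigr].
\]
Next I would choose the scale so that the coefficient of the first term equals $\tfrac14$, i.e. $3^{\sigma k}\sim\cs_s\,t\,\delta\,C_\Phi\,\Theta_{s,t}^{-\nicefrac12}$ (hence $3^{-sk}\sim(\Theta_{s,t}^{\nicefrac12}/(\cs_s\,t\,\delta\,C_\Phi))^{s/\sigma}$; if $\Theta_{s,t}$ is so small this would force $3^k>\delta$ one takes $3^k\sim\delta$, which only improves the bound). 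Absorbing the first term and applying Young's inequality to the second yields the one-step inequality
\[
  \int_{\varrho'\cu_0}\!\!\nabla v\cdot\a\nabla v
  \;\le\;\tfrac12\int_{\varrho''\cu_0}\!\!\nabla v\cdot\a\nabla v
  \;+\;\frac{C\,\Lambda_s\,\Theta_{s,t}^{s/\sigma}}{\bigl(C_\Phi\,s\,(1-s)\bigr)^{2(s+\sigma)/\sigma}(\varrho''-\varrho')^{2(s+\sigma)/\sigma}}\,\|v\|_{\underline L^2(\varrho''\cu_0)}^2,
\]
where the $s,(1-s),t$-dependent constants from $\cs_s$, the Sobolev--Poincar\'e constant $\sim t^{-1}$, and the scale exponent in $3^{-sk}$ have been collected and bounded.

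\emph{Step 4 (iteration) and the main obstacle.} Applying the one-step inequality along a geometric sequence $\varrho_1=\rho_0<\rho_1<\cdots\to\varrho_2$ with $\rho_{j+1}-\rho_j=(1-\tau)\tau^j(\varrho_2-\varrho_1)$ and summing the resulting geometric series — convergent once $\tau$ is close enough to $1$, the optimal $\tau$ behaving like $1-c\sigma$ and contributing the factor $(2/\sigma)^{(s+\sigma)/\sigma}$ — and taking square roots gives~\eqref{e.cg.caccioppoli}. This iteration is the routine part. The delicate part is Steps~2--3: the bookkeeping of the scale-discount factors $3^{\pm sk}$, $3^{\sigma k}$ supplied by~\eqref{e.ellipticity.scales} on the small cubes $Q_i$, and the recognition that $\sigma=1-s-t>0$ is exactly the threshold that both keeps the scale supremum in $[v]_{\Besov{2}{\infty}{s}(Q_i)}$ finite and lets $3^k$ be chosen to absorb the leading term — it is this mechanism that forces the exponents $(s+\sigma)/\sigma$ on $C_\Phi^{-1}$ and $(\varrho_2-\varrho_1)^{-1}$ and $s/(2\sigma)$ on $\Theta_{s,t}$.
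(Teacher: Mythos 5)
Your strategy is the paper's: test with a cutoff times $\Phi(u)$, pass the resulting cross term through Besov duality on a mesoscopic cover of the support of the cutoff's gradient, control the flux and gradient factors by the coarse-grained estimates of Lemmas~\ref{l.besov.fluxes.easy} and~\ref{l.besov.poincare} together with the scaling bounds~\eqref{e.ellipticity.scales}, choose the mesoscale to absorb the self-term, apply Young's inequality, and iterate via Lemma~\ref{l.iteration}. You also correctly identify that $\sigma>0$ is what makes the inner supremum in $[v]_{\Besov{2}{\infty}{s}(Q_i)}$ attain at the coarsest scale $m=k$ and what produces the final exponents $(s+\sigma)/\sigma$ and $s/(2\sigma)$.

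The gap is quantitative, not structural, and sits in the pairing you use for the cross term. You bound $\bigl|\int_{Q_i}\eta v\,\nabla\eta\cdot\a\nabla v\bigr|$ by a single duality $\|\a\nabla v\|_{\BesovDual{2}{1}{-s}(Q_i)}\,\|\eta v\nabla\eta\|_{\Besov{2}{\infty}{s}(Q_i)}$, which attaches the constant $\cs_s^{-1}\approx s^{-1}$ from Lemma~\ref{l.besov.fluxes.easy} to \emph{both} contributions to $\|\eta v\nabla\eta\|_{\Besov{2}{\infty}{s}(Q_i)}$ — the one controlled by $\|\a^{\nicefrac12}\nabla v\|_{\underline L^2(Q_i)}$ and the one controlled by $\|v\|_{\underline L^2(Q_i)}$. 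The paper's~\eqref{e.caccioppoli.duality} instead splits $\Psi(u)\nabla\varphi$ into its zero-mean part (handled exactly as you do, with $\Besov{2}{\infty}{s}$/$\BesovDualSum{2}{1}{-s}$) and the mean part $(\Psi(u))_{z+\cu_{-h}}\nabla\varphi$, which it pairs with $\|\a\nabla\Psi(u)\|_{\BesovDualSum{2}{1}{-1}}$ against $\|\nabla\varphi\|_{\Besov{2}{\infty}{1}}$; the latter constant $\cs_1^{-1}=\nicefrac{3}{2}$ carries no $s$-dependence. After the scale choice this is precisely the difference between a denominator $\cs_s^{(s+\sigma)/\sigma}$ and the stated $\cs_s^{s/\sigma}$: your version leaves an uncancelled factor $\cs_s^{-1}\approx s^{-1}$ in the final prefactor. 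Since $s^{s/\sigma}(1-s)^{s/\sigma}$ in~\eqref{e.cg.caccioppoli} stays bounded as $s\downarrow 0$, that extra $s^{-1}$ cannot be absorbed into the dimensional constant $C(d)$, so the proposition as stated is not reached.

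A smaller discrepancy of the same flavour: you control $[v]_{\Besov{2}{\infty}{s}(Q_i)}$ via the Sobolev--Poincar\'e inequality (constant $\sim t^{-1}$) on subcubes plus $\lambda_t$-scaling, whereas the paper passes through $\|\nabla v\|_{\BesovDualSum{2}{1}{s-1}}$ with constant $\cs_{1-s}^{-1}\sim(1-s)^{-1}$ and only afterwards uses $\lambda_{1-s}^{-1}\le\lambda_t^{-1}$ and the $\lambda_t$-scaling. Since $t\le 1-s$, your route gives $t^{s/\sigma}$ where the statement has $(1-s)^{s/\sigma}$; indeed the one-step prefactor you record, $(C_\Phi s(1-s))^{2(s+\sigma)/\sigma}$, already disagrees with the paper's $C_\Phi^{2(s+\sigma)/\sigma}(\cs_s\cs_{1-s})^{2s/\sigma}$. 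Both issues are fixed by mirroring the paper's split: handle the mean-of-$v$ piece at Besov index $1$ and route the zero-mean piece through Lemma~\ref{l.besov.fluxes.easy} at index $1-s$.
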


Proposition~\ref{p.caccioppoli} is an extension of the coarse-grained Caccioppoli inequality proved for solutions in~\cite[Prop 2.5]{AK.HC}.
Before presenting its proof, we first record the following consequence.

\begin{corollary}
  Let~$s,t \in (0,1)$ with~$\sigma \coloneqq 1-s-t >0$.
  There exists a constant~$C(d)<\infty$ such that, for every~$u\in\A(\cu_0)$ and~$k>0$,
  \begin{equation} \label{e.caccioppoli.trunc}
    \|\a^{\nicefrac12}\nabla(u-k)_+\|_{\underline{L}^2(\varrho_1\cu_0)}
    \leq
    \frac{C (\frac{2}{\sigma})^{\frac{s+\sigma}{\sigma}}\Theta_{s,t}^{\frac{s}{2\sigma}}(\cu_0\,;\bacontrunc)\Lambda_s^{\nicefrac 12}(\cu_0\,;\bacontrunc)}{s^{\nicefrac{s}{\sigma}}(1-s)^{\nicefrac{s}{\sigma}}(\varrho_2-\varrho_1)^{\frac{s+\sigma}{\sigma}}}
    \|(u-k)_+\|_{\underline{L}^2(\varrho_2\cu_0)}\,.
  \end{equation}
  Moreover, if~$u\in\A(\cu_0)$ is strictly positive and bounded, and~$p\in \R \setminus \{0,1\}$, then
  \begin{equation} \label{e.caccioppoli.power}
    \|\a^{\nicefrac12}\nabla(u^{p/2})\|_{\underline{L}^2(\varrho_1\cu_0)}
    \leq
    \frac{C (\frac{2}{\sigma})^{\frac{s+\sigma}{\sigma}}\bigl(\frac{|p|}{|p-1|}\bigr)^{\frac{s+\sigma}{\sigma}}\Theta_{s,t}^{\frac{s}{2\sigma}}(\cu_0\,;\baconpow)\Lambda_s^{\nicefrac 12}(\cu_0\,;\baconpow)}{s^{\nicefrac{s}{\sigma}}(1-s)^{\nicefrac{s}{\sigma}}(\varrho_2-\varrho_1)^{\frac{s+\sigma}{\sigma}}}
    \|u^{p/2}\|_{\underline{L}^2(\varrho_2\cu_0)}.
  \end{equation}
\end{corollary}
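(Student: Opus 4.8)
The plan is to derive both inequalities as immediate specializations of Proposition~\ref{p.caccioppoli}: in each case I would select $\Psi$ (equivalently $\Phi$) so that $\Psi(u)$ is precisely the function whose weighted gradient energy is to be estimated, verify the hypotheses of the proposition with the appropriate cone $\bacon$ and degeneracy alternative, compute the constant $C_\Phi$, and read the conclusion off~\eqref{e.cg.caccioppoli} --- using that the constant appearing in~\eqref{e.cg.caccioppoli} depends on $\Phi$ only through $C_\Phi$.

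For~\eqref{e.caccioppoli.trunc}, I would apply Proposition~\ref{p.caccioppoli} with $\Phi(t)=(t-k)_+$ and $\bacon=\bacontrunc$. Here $\sqrt{|\Phi'(s)|}=\indc_{\{s>k\}}$, so $\Psi(t)=\int_0^t\indc_{\{s>k\}}\,ds=(t-k)_+=\Phi(t)$; since $\Phi(t)/\sqrt{|\Phi'(t)|}=t-k$ for $t>k$ and vanishes identically for $t<k$ (with the convention $0/0=0$), the first degeneracy hypothesis $\lim_{t\to 0^+}\Phi(t)/\sqrt{|\Phi'(t)|}=0$ holds and one reads off $C_\Phi=1$. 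As $\Phi(u)=\Psi(u)=(u-k)_+\in\bacontrunc(\cu_0)$ by definition of the cone, inserting $C_\Phi=1$ and $\bacon=\bacontrunc$ into~\eqref{e.cg.caccioppoli} is exactly~\eqref{e.caccioppoli.trunc}.

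For~\eqref{e.caccioppoli.power}, strict positivity and boundedness of $u$ confine $u(\cu_0)$ to a compact subinterval of $(0,\infty)$, on which power functions are smooth; note also that the sign ambiguities below affect only $|\nabla\Psi(u)|=|\nabla(u^{p/2})|$, trivially. I would take $\Psi(t)=\pm t^{p/2}$, the sign chosen so that $\Psi$ is convex on the range of $u$, whence $\Psi(u)\in\baconpow(\cu_0)$. This forces $\sqrt{|\Phi'(t)|}=\tfrac{|p|}{2}\,t^{p/2-1}$, hence $\Phi'(t)=\pm\tfrac{p^2}{4}\,t^{p-2}$ and $\Phi(t)=\pm\tfrac{p^2}{4(p-1)}\,t^{p-1}$ (sign dictated by the convexity and monotonicity required of members of $\baconpow$); this is well defined because $p\neq 0,1$, and $\Phi(u)\in\baconpow(\cu_0)$. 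The applicable degeneracy alternative is $t\to 0^+$ when $p>0$ and $t\to\infty$ when $p<0$ --- the latter requires $p/2-1<-1$, i.e.\ $p<0$, for convergence of $\int_t^\infty\sqrt{|\Phi'|}$ --- and in either case $\Phi(t)/\sqrt{|\Phi'(t)|}=\tfrac{|p|}{2|p-1|}\,t^{p/2}$, so that $C_\Phi=\tfrac{2|p-1|}{|p|}$. Since $C_\Phi^{-\frac{s+\sigma}{\sigma}}=\bigl(\tfrac{|p|}{2|p-1|}\bigr)^{\frac{s+\sigma}{\sigma}}\leq\bigl(\tfrac{|p|}{|p-1|}\bigr)^{\frac{s+\sigma}{\sigma}}$, substituting into~\eqref{e.cg.caccioppoli} with $\bacon=\baconpow$ yields~\eqref{e.caccioppoli.power}, the spare factor $2^{-\frac{s+\sigma}{\sigma}}$ being absorbed into the $(\tfrac{2}{\sigma})^{\frac{s+\sigma}{\sigma}}$ already present.

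I do not expect a genuine obstacle: the corollary is essentially a dictionary translation of Proposition~\ref{p.caccioppoli}. The two points that need attention are (i) confirming that the chosen $\Phi(u)$ and $\Psi(u)$ really lie in the declared cone --- this is where strict positivity and boundedness of $u$, together with the sign choices making $\Phi$ and $\Psi$ convex, are used in the power case, and where the trivial membership $(u-k)_+\in\bacontrunc(\cu_0)$ is used for the truncation case; and (ii) correctly matching $C_\Phi$ and the degeneracy alternative to the sign of $p$. The failure of $C^1$ regularity of $(t-k)_+$ at $t=k$ is a harmless technicality: the relevant ratio is taken to be $0$ wherever $\Phi$ is locally constant, and with this reading all hypotheses of Proposition~\ref{p.caccioppoli} are met.
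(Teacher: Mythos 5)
Your proposal is correct and follows essentially the same route as the paper: both inequalities are obtained by specializing Proposition~\ref{p.caccioppoli} to $\Phi(t)=(t-k)_+$ (giving $C_\Phi=1$, $\bacon=\bacontrunc$) and to the power nonlinearity $\Phi(t)=\pm\tfrac{p^2}{4(p-1)}t^{p-1}$, $\Psi(t)=\pm t^{p/2}$ (giving $\bacon=\baconpow$). You correctly compute $|C_\Phi|=\tfrac{2|p-1|}{|p|}$ for the power case; note that the paper's own proof displays $C_\Phi=\tfrac{\texttt{s}(p)p}{2(p-1)}$, whose magnitude is the reciprocal of the correct value — your computation is the right one and is the one that is consistent with the stated bound~\eqref{e.caccioppoli.power}, since $C_\Phi^{-\frac{s+\sigma}{\sigma}}=\bigl(\tfrac{|p|}{2|p-1|}\bigr)^{\frac{s+\sigma}{\sigma}}\leq\bigl(\tfrac{|p|}{|p-1|}\bigr)^{\frac{s+\sigma}{\sigma}}$.
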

\begin{proof}
  Let~$u\in\A(\cu_0)$.
  Taking $\Phi(u)=(u-k)_+$ in Proposition~\ref{p.caccioppoli} yields \cref{e.caccioppoli.trunc}. Assume further that~$u$  is strictly positive and bounded, and let~$p\notin\{0,1\}$. Taking $\texttt{s}(p):=\sgn(p-2)$ if $p\not =2$ and $\texttt{s}(p)=1$ if $p=2$,  $\Phi(u):=\frac{\texttt{s}(p)p^{2}u^{p-1}}{4(p-1)}$ 
  in Proposition~\ref{p.caccioppoli}, so that~$\Psi(u) = u^{\nicefrac p2}$ and~$C_\Phi = \frac{\texttt{s}(p)p}{2(p-1)}$, yields \cref{e.caccioppoli.power}.
\end{proof}

We now sketch the main idea of the proof of the coarse-grained Caccioppoli inequality. As in the classical argument, we test the equation with the product $\varphi\,\Phi(u)$, where $\varphi$ is a cutoff function. The essential difficulty is to control the cross term
\begin{equation*}
  \biggl|\fint_{\cu_0} \a \nabla \Psi(u)\cdot \Psi(u)\,\nabla \varphi \biggr| \,,
\end{equation*}
which, in the uniformly elliptic setting, one simply bounds by Cauchy--Schwarz and absorbs into the left-hand side.

Here such an absorption is not available. Instead, we estimate the cross term by the duality pairing between $\a\nabla\Psi(u)\in \BesovDualSum{2}{1}{-s}$ and
$\Psi(u)\,\nabla\varphi \in \Besov{2}{\infty}{s}$ see~\eqref{e.caccioppoli.duality}. The required Besov norms are then controlled by Lemmas~\ref{l.besov.poincare} and~\ref{l.besov.fluxes.easy}; this is precisely where the coarse-grained ellipticity constants enter.

The duality argument is carried out on an intermediate (mesoscopic) scale. Optimizing over this scale ultimately produces the pre-factor in~\eqref{e.cg.caccioppoli} and, in particular, the dependence on the coarse-grained ellipticity ratio~$\Theta_{s,t}$. Conceptually, $\Theta_{s,t}$ quantifies the price of replacing the classical pointwise (scale-independent) control ellipticity by the effective, scale-discounted Besov control used in the coarse-grained setting.

\begin{proof}[Proof of Proposition~\ref{p.caccioppoli}]
  Fix parameters $r_1,r_2 \in [\nicefrac{1}{2}, 1)$ with $r_1 < r_2$ and scale separation parameters $h,k \in \N$ with $h \geq k+4$ and $3^{-4}(r_2 - r_1) \leq 3^{-k} \leq 3^{-3}(r_2 - r_1)$. Let $\tilde{r}:=(r_{1}+r_{2})/2$, and $\varphi \in C_c^\infty(\cu_0)$ be a cutoff function such that $\indc_{r_1 \cu_{0}} \leq \varphi \leq \indc_{\tilde{r}\cu_{0}}$ and $|\nabla^j \varphi| \leq C 3^{jk} \leq C (r_2-r_1)^{-j}$ for $j=1,2$. Denote the set of subcubes which intersect the support of $\nabla \varphi$ by
  \begin{equation*}S \coloneqq \{ z \in 3^{-h} \Z^d \cap \cu_0 : (z+\cu_{-h}) \cap \supp(\nabla \varphi) \neq \emptyset \}.
  \end{equation*}
  Test the equation with $\pm\Phi(u) \varphi$ (since~$u \in \A(\cu_0)$ this is allowed by Lemma~\ref{l.product.rule}\footnote{Keep in mind that by~\eqref{e.ellipticity.monotone} and $t<1-s$ it follows that $\Theta_{s,1-s}(\cu_{0};\bacon)\le \Theta_{s,t}(\cu_{0};\bacon)$. Moreover, the sign of the test function depends on the monotonicity of $\Phi$.}) to get
  \begin{equation*}
  \pm\fint_{\cu_0} \a \nabla u \cdot \nabla (\Phi(u) \varphi) = \fint_{\cu_0} \a \nabla u \cdot \nabla u |\Phi'(u)| \varphi \pm \fint_{\cu_0} \a \nabla u \cdot \nabla \varphi \Phi(u) = 0.
  \end{equation*}
  Recalling that $\Psi'(u) = \pm\sqrt{|\Phi'(u)|}$, we have
  \begin{equation*}\fint_{\cu_0} \a \nabla u \cdot \nabla u |\Phi'(u)| \varphi
    =
    \fint_{\cu_0} \varphi \a \nabla \Psi(u) \cdot \nabla \Psi(u)
  \end{equation*}
  and, hence, using that $\Psi(u) = \pm  \frac{C_\Phi\Phi(u)}{\sqrt{|\Phi'(u)|}}$, we get
  \begin{equation}
    \fint_{\cu_0} \varphi \a \nabla \Psi(u) \cdot \nabla \Psi(u)
    \leq
    \biggl| \fint_{\cu_0} \a \nabla \Psi(u) \cdot \nabla \varphi \frac{\Phi(u)}{\sqrt{|\Phi'(u)|}} \biggr|
    =
    C_\Phi^{-1} \biggl| \fint_{\cu_0}  \a \nabla \Psi(u) \cdot \Psi(u)  \nabla \varphi \biggr|
    \,.
    \label{e.caccioppoli.premature.start}
  \end{equation}
  We write the last term as
  \begin{equation}
     \fint_{\cu_0}  \a \nabla \Psi(u) \cdot \Psi(u)  \nabla \varphi  = \avsum_{z \in 3^{-h} \Z^d \cap \cu_0} \indc_{S}(z) \fint_{z+\cu_{-h}} \Psi(u) \a \nabla \Psi(u) \cdot \nabla \varphi\,.
    \label{e.caccioppoli.start}
  \end{equation}
  For each fixed $z$ we have by duality and \cref{e.dual.norms.relation},
  \begin{align} \label{e.caccioppoli.duality}
    \lefteqn{
      \fint_{z+\cu_{-h}} \Psi(u) \a \nabla \Psi(u) \cdot \nabla \varphi
    }
    \qquad &
    \notag   \\ &
    \leq
    \biggl| \fint_{z+\cu_{-h}} (\Psi(u)-(\Psi(u))_{z+\cu_{-h}}) \a \nabla \Psi(u) \cdot \nabla \varphi \biggr|
    + |(\Psi(u))_{z+\cu_{-h}}| \biggl|  \fint_{z+\cu_{-h}} \a \nabla \Psi(u) \cdot \nabla \varphi \biggr|
    \notag   \\ &
    \leq
    3^{d+s}\|(\Psi(u) - (\Psi(u))_{z+\cu_{-h}} )\nabla \varphi\|_{\Besov{2}{\infty}{s}(z+\cu_{-h})} \|\a \nabla \Psi(u)\|_{\BesovDualSum{2}{1}{-s}(z+\cu_{-h})}
    \notag   \\ & \qquad
    +3^{d+s} |(\Psi(u))_{z+\cu_{-h}}| \|\a \nabla \Psi(u)\|_{\BesovDualSum{2}{1}{-1}(z+\cu_{-h})} \|\nabla \varphi\|_{\Besov{2}{\infty}{1}(z+\cu_{-h})}\,.
  \end{align}
  For the first term we apply Lemma~\ref{l.besov.poincare} to get
  \begin{align*}
    \|(\Psi(u) - (\Psi(u))_{z+\cu_{-h}} )\nabla \varphi\|_{\Besov{2}{\infty}{s}(z+\cu_{-h})} &
    \leq
    C 3^{-h} \| \nabla \varphi\|_{\Wul{1}{\infty}(z+\cu_{-h})} \|\nabla \Psi(u)\|_{\BesovDualSum{2}{1}{s-1}(z+\cu_{-h})}
    \\ &
    \leq
    C 3^{k} \|\nabla \Psi(u)\|_{\BesovDualSum{2}{1}{s-1}(z+\cu_{-h})}\,.
  \end{align*}
  Using our assumption that $\Psi(u) \in \bacon(\cu_0)$ (it is easy to see that it is convex since $\Phi$ is monotone), by Lemma~\ref{l.besov.fluxes.easy} we have
  \begin{equation*}\|\nabla \Psi(u)\|_{\BesovDualSum{2}{1}{s-1}(z+\cu_{-h})} \leq 3^{-(1-s)h} \cs_{1-s}^{-1} \lambda_{1-s}^{-\nicefrac{1}{2}}(z+\cu_{-h}) \|\a^{\nicefrac12} \nabla \Psi(u)\|_{\underline{L}^2(z+\cu_{-h})}.
  \end{equation*}
  Similarly, for $r \in \{1,s\}$ we have for the flux by Lemma~\ref{l.besov.fluxes.easy}
  \begin{equation*}\|\a \nabla \Psi(u)\|_{\BesovDualSum{2}{1}{-r}(z+\cu_{-h})} \leq 3^{-r h} \cs_{r}^{-1} \Lambda_{r}^{\nicefrac{1}{2}}(z+\cu_{-h}) \|\a^{\nicefrac12} \nabla \Psi(u)\|_{\underline{L}^2(z+\cu_{-h})}.
  \end{equation*}
  Assembling the above estimates together with $\|\nabla \varphi\|_{\Besov{2}{\infty}{1}(z+\cu_{-h})} \leq C 3^{k+h}$
  \begin{align*}
    \fint_{z+\cu_{-h}} \Psi(u) \a \nabla \Psi(u) \cdot \nabla \varphi
     &
    \leq
    C \frac{3^{k-h}}{\cs_{1-s} \cs_s} \Theta_{s,1-s}^{\nicefrac{1}{2}}(z+\cu_{-h})\|\a^{\nicefrac12} \nabla \Psi(u)\|_{\underline{L}^2(z+\cu_{-h})}^2
    \\ & \qquad
    +
    C 3^{k} \Lambda_1^{\nicefrac{1}{2}}(z+\cu_{-h}) \|\Psi(u)\|_{\underline{L}^2(z+\cu_{-h})} \|\a^{\nicefrac12} \nabla \Psi(u)\|_{\underline{L}^2(z+\cu_{-h})}.
  \end{align*}
  Using that~$s+t < 1$ and~$\sigma = 1-s-t$ we get from~\cref{e.ellipticity.monotone,e.Theta.scaling,e.ellipticity.scales} that
  \begin{align*}
     &
    \max_{z \in 3^{-h} \Z^d \cap \cu_0} \Theta_{s,1-s}(z+\cu_{-h}) \leq \max_{z \in 3^{-h} \Z^d \cap \cu_0} \Theta_{s,t}(z+\cu_{-h}) \leq 3^{2(s+t)h} \Theta_{s,t}(\cu_0)
    \,,
    \\ &
    \max_{z \in 3^{-h} \Z^d \cap \cu_0} \Lambda_1(z+\cu_{-h}) \leq  \max_{z \in 3^{-h} \Z^d \cap \cu_0} \Lambda_s(z+\cu_{-h}) \leq 3^{2sh} \Lambda_s(\cu_0).
  \end{align*}
  By combining the previous two displays, we get
  \begin{align*}
    \fint_{z+\cu_{-h}} \Psi(u) \a \nabla \Psi(u) \cdot \nabla \varphi
     & \leq C \frac{3^{k-\sigma h}}{\cs_{1-s} \cs_s} \Theta_{s,t}^{\nicefrac{1}{2}}(\cu_0)\|\a^{\nicefrac12} \nabla \Psi(u)\|_{\underline{L}^2(z+\cu_{-h})}^2
    \\ & \qquad
    + C 3^{k+sh} \Lambda_s^{\nicefrac{1}{2}}(\cu_0) \|\Psi(u)\|_{\underline{L}^2(z+\cu_{-h})} \|\a^{\nicefrac12} \nabla \Psi(u)\|_{\underline{L}^2(z+\cu_{-h})}.
  \end{align*}
  Notice that the bounds on the size of $h$ yield
  $$
  \bigcup_{z\in S}\bigl(z+\cu_{-h}\bigr)\subset \bigl(\tilde{r}+2\,3^{-h}\bigr)\cu_{0}\subset r_{2}\cu_{0},
  $$
  so, by summing over $z \in S$, we have, by Young's inequality,
  \begin{align*}
    \fint_{\cu_0} \a \nabla \Psi(u) \cdot \nabla \varphi \Psi(u) & \leq C \frac{3^{k-\sigma h}}{\cs_{1-s} \cs_s} \Theta_{s,t}^{\nicefrac{1}{2}}(\cu_0)\|\a^{\nicefrac12} \nabla \Psi(u)\|_{\underline{L}^2(r_2 \cu_0)}^2
    \\ & \qquad + C 3^{k+sh} \Lambda_s^{\nicefrac{1}{2}}(\cu_0) \|\Psi(u)\|_{\underline{L}^2(r_2 \cu_0)} \|\a^{\nicefrac12} \nabla \Psi(u)\|_{\underline{L}^2(r_2\cu_0)}.
  \end{align*}
  Plugging this into~\eqref{e.caccioppoli.premature.start} leads to
  \begin{align*}
    \|\a^{\nicefrac{1}{2}} \nabla \Psi(u)\|_{\underline{L}^2(r_1 \cu_0)}^2 & \leq C C_\Phi^{-1} \frac{3^{k-\sigma h}}{\cs_{1-s} \cs_s} \Theta_{s,t}^{\nicefrac{1}{2}}(\cu_0)\|\a^{\nicefrac12} \nabla \Psi(u)\|_{\underline{L}^2(r_2\cu_0)}^2
    \\ & \qquad + C C_\Phi^{-1} 3^{k+sh} \Lambda_s^{\nicefrac{1}{2}}(\cu_0) \|\Psi(u)\|_{\underline{L}^2(r_2\cu_0)} \|\a^{\nicefrac12} \nabla \Psi(u)\|_{\underline{L}^2(r_2\cu_0)}.
  \end{align*}
  Choosing $h$ such that the coefficient in front of first term in the above is less or equal than $\nicefrac{1}{4}$, specifically
  \begin{equation*}h \ge \frac{1}{\sigma} \log_3 \biggl(  \frac{4 C C_\Phi^{-1}}{\cs_{1-s} \cs_s} \Theta_{s,t}^{\nicefrac{1}{2}}(\cu_0) \biggr ) + \frac{k}{\sigma},
  \end{equation*}
  which yields
  \begin{align*}
    \lefteqn{
    \|\a^{\nicefrac{1}{2}} \nabla \Psi(u)\|_{\underline{L}^2(r_1 \cu_0)}^2
    } \ \ 
    \notag \\ &
    \leq \frac{1}{4} \|\a^{\nicefrac12} \nabla \Psi(u)\|_{\underline{L}^2(r_2 \cu_0)}^2
    + C \frac{C_\Phi^{-\frac{s+\sigma}{\sigma}}}{(\cs_s \cs_{1-s})^{\nicefrac{s}{\sigma}}} 3^{k \frac{s+\sigma}{\sigma}} \Theta_{s,t}^{\frac{s}{2\sigma}}(\cu_0)\Lambda_s^{\nicefrac{1}{2}}(\cu_0) \|\Psi(u)\|_{\underline{L}^2(r_2\cu_0)} \|\a^{\nicefrac12} \nabla \Psi(u)\|_{\underline{L}^2(r_2\cu_0)}
    \,.
  \end{align*}
  Finally, an application of Young's inequality yields  \begin{align*}
    \lefteqn{
    \|\a^{\nicefrac{1}{2}} \nabla \Psi(u)\|_{\underline{L}^2(r_1 \cu_0)}^2
    }
    \quad &
    \notag  \\ &
    \leq \frac{1}{2} \|\a^{\nicefrac12} \nabla \Psi(u)\|_{\underline{L}^2(r_2 \cu_0 )}^2
    + \frac{C}{(r_2-r_1)^{2\frac{s+\sigma}{\sigma}}}  \frac{C_\Phi^{-2\frac{s+\sigma}{\sigma}}}{(\cs_s \cs_{1-s})^{\nicefrac{2s}{\sigma}}} \Theta_{s,t}^{\nicefrac{s}{\sigma}}(\cu_0)\Lambda_s(\cu_0) \|\Psi(u)\|_{\underline{L}^2(r_2\cu_0)}^2
    \,.
  \end{align*}
  We next apply Lemma~\ref{l.iteration} to the function
  \begin{equation*}G(r) \coloneqq \|\a^{\nicefrac{1}{2}} \nabla \Psi(u)\|_{\underline{L}^2(r \cu_0)}^2
    \,.
  \end{equation*}
  Fix the maximal scale to be $\varrho_2 \in [\nicefrac{1}{2},1)$ (as in the statement) and note that for every $r_1, r_2 \in [\nicefrac{1}{2},\varrho_2)$ with $r_1 < r_2$ we have
  \begin{equation*}G(r_1) \leq \frac{1}{2} G(r_2) + \frac{A}{(r_2-r_1)^\xi}
  \end{equation*}
  where
  \begin{equation*}A \coloneqq C \frac{C_\Phi^{-2\frac{s+\sigma}{\sigma}}}{(\cs_s \cs_{1-s})^{\nicefrac{2s}{\sigma}}} \Theta_{s,t}^{\nicefrac{s}{\sigma}}(\cu_0)\Lambda_s(\cu_0) \|\Psi(u)\|_{\underline{L}^2(\varrho_2 \cu_0)}^2
    \qquad
    \mbox{and} \qquad
    \xi \coloneqq 2 \frac{s+\sigma}{\sigma}
    \,.
  \end{equation*}
  The application of the lemma yields a constant~$C(d)<\infty$ such that, for every $\varrho_1 \in [\nicefrac{1}{2},\varrho_2)$,
  \begin{equation*}\|\a^{\nicefrac{1}{2}} \nabla \Psi(u)\|_{\underline{L}^2(\varrho_1 \cu_0)}^2
    \leq
    \frac{C (2\xi)^{\xi} }{(\varrho_2-\varrho_1)^{\xi}}
    \frac{C_\Phi^{-\xi}}{(\cs_s \cs_{1-s})^{\nicefrac{2s}{\sigma}}} \Theta_{s,t}^{\nicefrac{s}{\sigma}}(\cu_0)\Lambda_s(\cu_0) \|\Psi(u)\|_{\underline{L}^2(\varrho_2 \cu_0)}^2\,.
  \end{equation*}
  Noting that~$\cs_s \approx s$, we obtain the desired result.
\end{proof}

\subsection{Reverse H\"older's inequality}

By applying the coarse-grained Caccioppoli and the coarse-grained Sobolev-Poincar\'e inequalities, we obtain a reverse H\"older inequality.

\begin{proposition}[Reverse H\"older inequality]
  \label{p.reverse.holder}
  Let~$s,t \in (0,1)$ with $\sigma\coloneqq 1-s-t > 0$.
  There exists a constant $C(d)<\infty$ such that, for every~$u$,~$\bacon$,~$\Phi$ and~$\Psi$ as in Proposition~\ref{p.caccioppoli}, and $\nicefrac12 \leq \varrho_1 < \varrho_2 \leq 1$, we have, with~$2^\ast_t \coloneqq \frac{2d}{d-2(1-t)}$,
  \begin{equation}
    \label{e.reverse.holder}
    \|\Psi(u)\|_{\underline{L}^{2^\ast_t}(\varrho_1 \cu_0)}
    \leq
    \frac{C}{(\varrho_2 - \varrho_1)^{\nicefrac d2}} \biggl(
    1+ \biggl(
    \frac{C \Theta_{s,t}^{\nicefrac{1}{2}}(\cu_0\,;\bacon)}{ C_\Phi \sigma t(\varrho_2-\varrho_1) }
    \biggr)^{\!\frac{s+\sigma}{\sigma}} \biggr)
    \|\Psi(u)\|_{\underline{L}^2(\varrho_2 \cu_0)}
    \,.
  \end{equation}
\end{proposition}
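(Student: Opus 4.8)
The plan is to derive~\eqref{e.reverse.holder} by combining the coarse-grained Sobolev--Poincar\'e inequality of Proposition~\ref{p.sobolev.poincare} (with exponent $t$) with the coarse-grained Caccioppoli inequality of Proposition~\ref{p.caccioppoli}, exactly in the way one passes from Caccioppoli plus Sobolev to a reverse H\"older inequality in the classical De Giorgi--Moser theory. Fix $\nicefrac12\le\varrho_1<\varrho_2\le1$ and set $\varrho'\coloneqq\tfrac{\varrho_1+\varrho_2}{2}$. The one structural point requiring care is that Proposition~\ref{p.sobolev.poincare} is stated on a full triadic cube $\cu_n$, whereas I need a \emph{localized} version on $\varrho_1\cu_0$ whose right-hand side involves only the energy $\|\a^{\nicefrac12}\nabla\Psi(u)\|_{\underline{L}^2(\varrho'\cu_0)}$ on the intermediate cube --- this is precisely what makes Proposition~\ref{p.caccioppoli}, which needs a gap between the two radii, applicable afterwards. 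It is essential that this localization be carried out \emph{without} multiplying $\Psi(u)$ by a spatial cutoff inside the $H^1_\a$-norm: the cross term $\int|\a|\,|\nabla\varphi|^2\,\Psi(u)^2$ produced by such a cutoff cannot be absorbed without an $L^{1+\delta}$ integrability assumption on $\a$, which is exactly the hypothesis we wish to avoid.

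Instead I would cover $\varrho_1\cu_0$ by the triadic subcubes $Q=z+\cu_{-j}$ with $z\in3^{-j}\Z^d$ that meet $\varrho_1\cu_0$, choosing the scale $3^{-j}\simeq\varrho_2-\varrho_1$ small enough that every such $Q$ lies inside $\varrho'\cu_0$. On each $Q$ I apply Proposition~\ref{p.sobolev.poincare} (in its evident translated form) directly to $\Psi(u)\in H^1_\a(Q)$ and use the scaling relation~\eqref{e.ellipticity.scales} to replace $\lambda_t^{-1/2}(Q)$ by $3^{tj}\lambda_t^{-1/2}(\cu_0)$, which gives
\[
  \|\Psi(u)-(\Psi(u))_Q\|_{\underline{L}^{2^\ast_t}(Q)}\le C\,t^{-1}\,3^{-(1-t)j}\,\lambda_t^{-1/2}(\cu_0)\,\|\a^{\nicefrac12}\nabla\Psi(u)\|_{\underline{L}^2(Q)}.
\]
Summing the $2^\ast_t$-th powers over the $\simeq3^{dj}$ subcubes, using $\ell^2\hookrightarrow\ell^{2^\ast_t}$ (valid since $2^\ast_t>2$) to recombine the squared energies into $\|\a^{\nicefrac12}\nabla\Psi(u)\|_{\underline{L}^2(\varrho'\cu_0)}^2$, and bounding the local means crudely by $|(\Psi(u))_Q|\le\|\Psi(u)\|_{\underline{L}^2(Q)}$, I expect to obtain
\[
  \|\Psi(u)\|_{\underline{L}^{2^\ast_t}(\varrho_1\cu_0)}\le C\,t^{-1}\,\lambda_t^{-1/2}(\cu_0)\,\|\a^{\nicefrac12}\nabla\Psi(u)\|_{\underline{L}^2(\varrho'\cu_0)}+C\,(\varrho_2-\varrho_1)^{-(1-t)}\,\|\Psi(u)\|_{\underline{L}^2(\varrho'\cu_0)}.
\]
The bookkeeping to check here is that in the gradient term the powers $3^{-dj/2^\ast_t}$ (from volume normalization), $3^{-(1-t)j}$ (the Poincar\'e factor) and $3^{dj/2}$ (from the $\ell^2$--$\ell^{2^\ast_t}$ step) cancel \emph{exactly}, which is forced by the identity $\tfrac12-\tfrac{1}{2^\ast_t}=\tfrac{1-t}{d}$, i.e.\ by the precise value of $2^\ast_t$; while in the mean term the leftover factor is $3^{(1-t)j}\simeq(\varrho_2-\varrho_1)^{-(1-t)}$, which is at most $(\varrho_2-\varrho_1)^{-\nicefrac{d}{2}}$ since $1-t<\nicefrac{d}{2}$ for $d\ge2$ and $\varrho_2-\varrho_1<1$.

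It then remains only to bound the gradient term. Applying Proposition~\ref{p.caccioppoli} with radii $\varrho'<\varrho_2$ (so that $\varrho_2-\varrho'=\tfrac12(\varrho_2-\varrho_1)$) bounds $\|\a^{\nicefrac12}\nabla\Psi(u)\|_{\underline{L}^2(\varrho'\cu_0)}$ by a multiple of $C_\Phi^{-(s+\sigma)/\sigma}(\varrho_2-\varrho_1)^{-(s+\sigma)/\sigma}\,\Theta_{s,t}^{s/(2\sigma)}(\cu_0;\bacon)\,\Lambda_s^{1/2}(\cu_0;\bacon)\,\|\Psi(u)\|_{\underline{L}^2(\varrho_2\cu_0)}$, the multiple collecting the $s$-, $(1-s)$- and $\sigma$-dependent constants (and the harmless $2^{(s+\sigma)/\sigma}$ coming from $\varrho_2-\varrho'$). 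Multiplying by the prefactor $t^{-1}\lambda_t^{-1/2}(\cu_0)$ and invoking the identity $\lambda_t^{-1/2}(\cu_0)\,\Lambda_s^{1/2}(\cu_0;\bacon)=\Theta_{s,t}^{1/2}(\cu_0;\bacon)$, the two $\Theta$-powers combine as $\Theta_{s,t}^{s/(2\sigma)}\cdot\Theta_{s,t}^{1/2}=\Theta_{s,t}^{(s+\sigma)/(2\sigma)}=\bigl(\Theta_{s,t}^{1/2}\bigr)^{(s+\sigma)/\sigma}$, precisely the power of $\Theta_{s,t}^{1/2}$ appearing in~\eqref{e.reverse.holder}; absorbing the remaining $s,1-s,\sigma,t$ factors and the dimensional constants into the constant $C$ inside the bracket (which is itself raised to the power $(s+\sigma)/\sigma$) and identifying the mean term with the ``$1$'' summand then yields~\eqref{e.reverse.holder}. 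The main obstacle --- and the only place where genuine care is needed --- is the localized Sobolev--Poincar\'e step: arranging that its right-hand side stays supported on $\varrho'\cu_0$, so that no integrability hypothesis on $\a$ is invoked, and verifying that the several powers of $3^j$ generated by the covering conspire to cancel in the gradient term while leaving an acceptable negative power of $\varrho_2-\varrho_1$ in the mean term. The remaining two steps --- applying Caccioppoli and collapsing the ellipticity constants via $\Theta_{s,t}=\Lambda_s/\lambda_t$ --- are routine.
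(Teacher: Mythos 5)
Your proof is correct and uses exactly the same two ingredients as the paper (the coarse-grained Sobolev--Poincar\'e and Caccioppoli inequalities, plus the scaling relations~\eqref{e.ellipticity.scales}--\eqref{e.Theta.scaling}), combined via a covering at the mesoscopic scale $3^{-j}\simeq\varrho_2-\varrho_1$; the only difference is in how the two estimates are chained. The paper applies \emph{both} Caccioppoli (from $z+\cu_{-h-1}$ up to $z+\cu_{-h}$) and Sobolev--Poincar\'e (on $z+\cu_{-h-1}$) locally on each subcube and only then sums, rescaling the Caccioppoli inequality and all three ellipticity quantities to the small scale; you instead aggregate Sobolev--Poincar\'e across subcubes first---exploiting the exact cancellation forced by $\tfrac12-\tfrac1{2^\ast_t}=\tfrac{1-t}{d}$ to land on the energy over $\varrho'\cu_0$---and then apply Caccioppoli a single time at the global scale from $\varrho'\cu_0$ to $\varrho_2\cu_0$. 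Your reorganization buys a cleaner book-keeping for the Caccioppoli step (no rescaling to small cubes, no need to absorb the factor $\varrho_2-\varrho_1=\nicefrac12$ into the constant), while the paper's version keeps everything at the same local scale and so only needs a single triangle-inequality/covering step at the end; both yield the identical power $\Theta_{s,t}^{(s+\sigma)/(2\sigma)}=(\Theta_{s,t}^{1/2})^{(s+\sigma)/\sigma}$ and the prefactor $(\varrho_2-\varrho_1)^{-d/2}$.
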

\begin{proof}
  Fix~$\varrho_1, \varrho_2 \in [\nicefrac 12,1]$ satisfying $\varrho_1 < \varrho_2$. Applying Propositions~\ref{p.sobolev.poincare} and~\ref{p.caccioppoli} to~$\Psi(u)$, we obtain, for every~$h \in \N$ and~$z \in 3^{-h} \Zd \cap \cu_0$ such that~$z+\cu_{-h} \subseteq \varrho_2 \cu_0$, that
  \begin{equation*}\|\a^{\nicefrac12}\nabla\Psi(u)\|_{\underline{L}^2(z+\cu_{-h-1})}
    \leq \frac{C (\frac{2}{\sigma})^{\frac{s+\sigma}{\sigma}}\Theta_{s,t}^{\frac{s}{2\sigma}}(z{+}\cu_{-h})\Lambda_s^{\nicefrac12}(z{+}\cu_{-h})}{C_\Phi^{\frac{s+\sigma}{\sigma}}s^{\nicefrac{s}{\sigma}}(1-s)^{\nicefrac{s}{\sigma}}}
    3^{h} \|\Psi(u)\|_{\underline{L}^2(z+\cu_{-h})}\,.
  \end{equation*}
  and
  \begin{equation*}
    3^{h} \| \Psi(u) -( \Psi(u) )_{z+\cu_{-h-1}}\|_{\underline{L}^{2^\ast_t}(z+\cu_{-h-1})} \leq \frac{C}{t} \lambda_t^{-\nicefrac{1}{2}}(z+\cu_{-h-1})  \| \a^{\nicefrac12} \nabla \Psi(u) \|_{\underline{L}^2(z+\cu_{-h-1})}\,.
  \end{equation*}
  By~\eqref{e.ellipticity.scales} and~\eqref{e.Theta.scaling}, we have
  \begin{equation*}
    \Theta_{s,t}^{\frac{s}{2\sigma}}(z{+}\cu_{-h}\,;\bacon)\Lambda_s^{\nicefrac12}(z{+}\cu_{-h}\,;\bacon) \lambda_t^{-\nicefrac{1}{2}}(z{+}\cu_{-h-1})\leq C 3^{ \frac{h(s+\sigma)(s+t)}{\sigma}} \Theta_{s,t}^{\frac{s+\sigma}{2\sigma}}(\cu_0)\,
    \leq
    C 3^{h \frac{s+\sigma}{\sigma}} \Theta_{s,t}^{\frac{s+\sigma}{2\sigma}}(\cu_0)
    \,.
  \end{equation*}
  Thus, we obtain
  \begin{equation*}
    \| \Psi(u) -( \Psi(u) )_{z+\cu_{-h-1}}\|_{\underline{L}^{2^\ast_t}(z+\cu_{-h-1})}
    \leq
    \frac{1}{ts^{\nicefrac{s}{\sigma}}(1-s)^{\nicefrac {s}{\sigma}}} \left( \frac{C 3^{h}  \Theta_{s,t}^{\nicefrac12} (\cu_0) }{\sigma C_{\Phi}}\right)^{\frac{s+\sigma}{\sigma}}
    \|\Psi(u)\|_{\underline{L}^2(z+\cu_{-h})} \,.
  \end{equation*}
  Using~$1- s = t + \sigma \geq t$ and
  \begin{equation*}
    \bigl| ( \Psi(u) )_{z+\cu_{-h-1}} \bigr|
    \leq
    3^{\nicefrac d2} \|\Psi(u)\|_{\underline{L}^2(z+\cu_{-h})}
    \leq
    C3^{h \frac d2} \|\Psi(u)\|_{\underline{L}^2(\varrho_2 \cu_{0})}
    \,,
  \end{equation*}
  we then obtain~\eqref{e.reverse.holder} by the triangle inequality and a covering argument by taking~$h$ so large that~$3^{-4}(\varrho_2 - \varrho_1)< 3^{-h} \leq 3^{-3}(\varrho_2 - \varrho_1)$.
\end{proof}

Using \cref{e.caccioppoli.power} instead for positive and bounded $u \in \A(\cu_0)$ and $p \in \R \setminus \{0,1\}$ we have
\begin{equation}\label{e.onestep.moser}
  \|u^{\nicefrac{p}{2}} \|_{\underline{L}^{2^\ast_t}(\varrho_1 \cu_0)}
  \leq
  \frac{C}{(\varrho_2 - \varrho_1)^{\nicefrac d2}} \biggl(
  1+ \biggl(
  \frac{C |p| \Theta_{s,t}^{\nicefrac{1}{2}}(\cu_0\,;\baconpow)}{|p-1| \sigma t (\varrho_2-\varrho_1) }
  \biggr)^{\! \frac{s+\sigma}{\sigma}} \biggr)
  \|u^{\nicefrac{p}{2}}\|_{\underline{L}^2(\varrho_2 \cu_0)}.
\end{equation}
The fact that $u^p \in \bacon^{\scriptscriptstyle{\mathrm{pow}}}_{\cu_0}$ follows from Lemma~\ref{l.chain.rule}.
For truncations $k > 0$, we have
\begin{equation}
  \label{e.onestep.degiorgi}
  \|(u-k)_+ \|_{\underline{L}^{2^\ast_t}(\varrho_1 \cu_0)}
  \leq
  \frac{C}{(\varrho_2 - \varrho_1)^{\nicefrac d2}} \biggl(
  \frac{C \Theta_{s,t}^{\nicefrac{1}{2}}(\cu_0\,;\bacontrunc)}{\sigma t(\varrho_2-\varrho_1 )}
  \biggr)^{\! \frac{s+\sigma}{\sigma}}
  \|(u-k)_+\|_{\underline{L}^2(\varrho_2 \cu_0)}
\end{equation}
The fact that $(u-k)_+ \in H_\a^{1}(\cu_0)$ follows from the remark after Lemma~\ref{l.chain.rule}.

\subsection{Logarithmic estimates}

We next present the logarithmic-type Caccioppoli inequality.

\begin{proposition} \label{c.log.caccioppoli}
  Let~$s,t \in (0,1)$ with $\sigma\coloneqq 1-s-t > 0$.
  There exists a constant $C(d)<\infty$ such that, if $u \in \A(\cu_0)$ is strictly positive and bounded, then
  \begin{equation}
    \|\a^{\nicefrac{1}{2}} \nabla \log u\|_{\underline{L}^2(\frac{1}{2} \cu_0)}^2 \leq C \Lambda_s(\cu_0\,;\baconpow) \,.
    \label{e.log.energy.est}
  \end{equation}
  Furthermore, there exists a constant $c(d) \in(0,1)$ such that if we define
  \begin{equation}
    w \coloneqq \exp \left ( - (\log u^{p_{*}} )_{\frac{1}{2} \cu_0} \right ) u^{p_{*}}
    \quad \mbox{with} \quad p_{*} \coloneqq c t \Theta_{s,t}^{-\nicefrac{1}{2}}(\cu_0\,;\baconpow)\in (0,\nicefrac{1}{2}]\,,
    \label{e.def.w.log}
  \end{equation}
  then
  \begin{equation}
    \left\| \log w \right\|_{\underline{L}^{2^*_t}(\frac{1}{2} \cu_0)} \leq 1\,.
    \label{e.log.w.estimate}
  \end{equation}
\end{proposition}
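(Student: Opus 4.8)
The plan is to prove the two assertions of Proposition~\ref{c.log.caccioppoli} in turn: first the energy bound~\eqref{e.log.energy.est}, by re-running the argument behind the coarse-grained Caccioppoli inequality with the choice $\Phi(t)=-1/t$ (which is \emph{not} covered by Proposition~\ref{p.caccioppoli}, since $\Phi(t)/\sqrt{|\Phi'(t)|}\equiv-1$ does not tend to $0$), and then~\eqref{e.log.w.estimate}, by feeding~\eqref{e.log.energy.est} into the coarse-grained Sobolev--Poincar\'e inequality of Proposition~\ref{p.sobolev.poincare}.

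For~\eqref{e.log.energy.est}, I would fix $\nicefrac12\le r_1<r_2\le 1$ and a cutoff $\varphi\in C^\infty_c(\cu_0)$ with $\indc_{r_1\cu_0}\le\varphi\le\indc_{\frac{r_1+r_2}{2}\cu_0}$ and $|\nabla^j\varphi|\lesssim(r_2-r_1)^{-j}$ for $j=1,2$. Testing $-\nabla\cdot\a\nabla u=0$ with the test function $u^{-1}\varphi\in H^1_{\a,0}(\cu_0)$ (legitimate by Lemma~\ref{l.product.rule}, since $u$ is bounded above and below) and using $u^{-1}\nabla u=\nabla\log u$ gives the identity $\fint_{\cu_0}\varphi\,\a\nabla\log u\cdot\nabla\log u=\fint_{\cu_0}\a\nabla\log u\cdot\nabla\varphi$. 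As in the proof of Proposition~\ref{p.caccioppoli}, the right-hand side cannot be absorbed by Cauchy--Schwarz; instead I would split it over triadic subcubes $z+\cu_{-h}$ meeting $\supp\nabla\varphi$ and, on each, use the $\BesovDualSum{2}{1}{-s}$--$\Besov{2}{\infty}{s}$ duality~\eqref{e.Besov.dual.norm}--\eqref{e.dual.norms.relation}. The flux factor $\|\a\nabla\log u\|_{\BesovDualSum{2}{1}{-s}(z+\cu_{-h})}$ is controlled by Lemma~\ref{l.besov.fluxes.easy} and the scaling~\eqref{e.ellipticity.scales} by $C\cs_s^{-1}\Lambda_s^{\nicefrac12}(\cu_0;\baconpow)\,\|\a^{\nicefrac12}\nabla\log u\|_{\underline{L}^2(z+\cu_{-h})}$; here one uses that $-\log u\in\baconpow(\cu_0)$ — since $-\log$ is convex, decreasing, with $(-\log)'(t)=-t^{-1}$ of the required power form — together with restriction stability, noting that $\a\nabla\log u$ and $\a\nabla(-\log u)$ have the same $\BesovDualSum{2}{1}{-s}$-seminorm. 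The cutoff factor $\|\nabla\varphi\|_{\Besov{2}{\infty}{s}(z+\cu_{-h})}$ is bounded via~\eqref{e.besov.prelim} by $C\bigl(3^{sh}(r_2-r_1)^{-1}+3^{-(1-s)h}(r_2-r_1)^{-2}\bigr)$. Summing over the $\approx(r_2-r_1)\,3^{dh}$ relevant cubes, using Cauchy--Schwarz in $z$, optimizing the mesoscale $h$ (so that $3^h\approx(r_2-r_1)^{-1}$), and applying Young's inequality yields $G(r_1)\le\tfrac12 G(r_2)+C(d)\cs_s^{-2}(r_2-r_1)^{-(1+2s)}\Lambda_s(\cu_0;\baconpow)$ for $G(r)\coloneqq\int_{r\cu_0}\a\nabla\log u\cdot\nabla\log u$; the iteration Lemma~\ref{l.iteration} on $[\nicefrac12,1]$ then absorbs the first term and gives~\eqref{e.log.energy.est}.

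For~\eqref{e.log.w.estimate}, a direct computation from~\eqref{e.def.w.log} gives $\log w=p_*\bigl(\log u-(\log u)_{\frac12\cu_0}\bigr)$, so that $\|\log w\|_{\underline{L}^{2^\ast_t}(\frac12\cu_0)}=p_*\,\|\log u-(\log u)_{\frac12\cu_0}\|_{\underline{L}^{2^\ast_t}(\frac12\cu_0)}$. Applying Proposition~\ref{p.sobolev.poincare} with exponent $t$ — transferred from the triadic cube $\cu_{-1}\subseteq\tfrac12\cu_0\subseteq\cu_0$ to $\tfrac12\cu_0$ by elementary volume comparisons and the scaling~\eqref{e.ellipticity.scales} for $\lambda_t$ — bounds the last norm by $C(d)\,t^{-1}\lambda_t^{-\nicefrac12}(\cu_0)\,\|\a^{\nicefrac12}\nabla\log u\|_{\underline{L}^2(\frac12\cu_0)}$. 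Inserting~\eqref{e.log.energy.est} and recalling $\Lambda_s^{\nicefrac12}(\cu_0;\baconpow)\,\lambda_t^{-\nicefrac12}(\cu_0)=\Theta_{s,t}^{\nicefrac12}(\cu_0;\baconpow)$ yields $\|\log w\|_{\underline{L}^{2^\ast_t}(\frac12\cu_0)}\le C(d)\,p_*\,t^{-1}\,\Theta_{s,t}^{\nicefrac12}(\cu_0;\baconpow)$. Choosing $c=c(d)\in(0,\nicefrac12]$ with $C(d)\,c\le 1$ and setting $p_*=c\,t\,\Theta_{s,t}^{-\nicefrac12}(\cu_0;\baconpow)$, one gets both $p_*\in(0,\nicefrac12]$ (from $t<1$ and $\Theta_{s,t}\ge1$) and $\|\log w\|_{\underline{L}^{2^\ast_t}(\frac12\cu_0)}\le C(d)\,c\le 1$, which is~\eqref{e.log.w.estimate}.

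The main obstacle is the first step. Because $\Phi(t)=-1/t$ fails the structural hypothesis of Proposition~\ref{p.caccioppoli}, the logarithmic estimate genuinely has to be proved by redoing the coarse-grained Caccioppoli argument; the two delicate points there are (i) recognizing that the relevant cone is $\baconpow$ and that $\log u$ enters it as the \emph{negative} of a convex, monotone, power-type transform of $u$, so that Lemma~\ref{l.besov.fluxes.easy} produces exactly the constant $\Lambda_s(\cu_0;\baconpow)$ appearing in~\eqref{e.log.energy.est}, and (ii) balancing the mean- and oscillation-contributions to $\|\nabla\varphi\|_{\Besov{2}{\infty}{s}(z+\cu_{-h})}$ through the choice of mesoscale $h$ so that, after Young's inequality and the iteration lemma, only a harmless power of $(r_2-r_1)$ survives. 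Everything else follows the classical logarithmic Caccioppoli / Moser scheme.
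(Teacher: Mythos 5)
Your proposal is essentially the paper's own proof, with only cosmetic differences in book-keeping. You correctly identify the two structural points: that $\Phi(t)=t^{-1}$ lies outside the hypotheses of Proposition~\ref{p.caccioppoli} because $\Phi/\sqrt{|\Phi'|}$ is constant rather than vanishing, and that the compensating feature is that testing with $u^{-1}\varphi$ produces the identity $\fint\varphi\,\a\nabla\log u\cdot\nabla\log u=\fint\a\nabla\log u\cdot\nabla\varphi$ with \emph{no} zero-order $\log u$ factor on the right, so the redone Caccioppoli argument is actually simpler and yields an $L^2$-free right-hand side $C\Lambda_s$; you also correctly use $-\log u\in\baconpow$ (via $C=-1$, $p=-1$) to invoke Lemma~\ref{l.besov.fluxes.easy}, and the second half via Proposition~\ref{p.sobolev.poincare} and $\Lambda_s^{1/2}\lambda_t^{-1/2}=\Theta_{s,t}^{1/2}$ is exactly the paper's argument.

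The only places where you diverge in detail from the paper's execution are harmless. In the duality step you pair $\a\nabla\log u$ in $\BesovDualSum{2}{1}{-s}$ against $\nabla\varphi$ in $\Besov{2}{\infty}{s}$ and then balance the mean and oscillation contributions to $\|\nabla\varphi\|_{\Besov{2}{\infty}{s}}$ by optimizing $3^h\approx(\varrho_2-\varrho_1)^{-1}$; the paper instead takes the exponent $r=1$, i.e.\ pairs $\BesovDualSum{2}{1}{-1}$ against $\Besov{2}{\infty}{1}$, where the $3^{\mp h}$ factors cancel automatically and the residual $3^{sh}$ comes from scaling $\Lambda_1(z+\cu_{-h})\le 3^{2sh}\Lambda_s(\cu_0)$, so no optimization over $h$ is needed. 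You also track the $\approx(\varrho_2-\varrho_1)$ fraction of mesoscopic cubes meeting $\supp\nabla\varphi$ and thereby obtain a mildly better exponent $(\varrho_2-\varrho_1)^{-(1+2s)}$ in the iteration inequality; the paper foregoes this gain since any polynomial power of $(\varrho_2-\varrho_1)$ is absorbed by the iteration lemma. Neither difference affects the final form of the estimate, and both arguments land on the same identity, the same cone, and the same iteration scheme.
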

\begin{proof}
  We test the equation with $\Phi(u) \varphi$, $\Phi(u) = u^{-1}$, with $\Psi(u) = \log u$. Note that~$\Phi(u),-\Psi(u) \in \baconpow(\cu_0)$. We obtain
  \begin{equation*}\fint_{\cu_0} \varphi \a \nabla \Psi(u) \cdot \nabla \Psi(u) = \fint_{\cu_0} \a \nabla \Psi(u) \cdot \nabla \varphi\,.
  \end{equation*}
  Arguing as in the proof of Proposition~\ref{p.caccioppoli}, in place of~\cref{e.caccioppoli.duality} we obtain
  \begin{equation*}\fint_{z+\cu_{-h}} \a \nabla \Psi(u) \cdot \nabla \varphi
    \leq  C \|\a \nabla \Psi(u)\|_{\BesovDualSum{2}{1}{-1}(z+\cu_{-h})} \|\nabla \varphi\|_{\Besov{2}{\infty}{1}(z+\cu_{-h})}\,,
  \end{equation*}
  from which we obtain the estimate
  \begin{eqnarray*}
    \|\a^{\nicefrac{1}{2}} \nabla \Psi(u)\|_{\underline{L}^2(r_1 \cu_0)}^2
     & \leq&
    C 3^{k+sh} \Lambda_s^{\nicefrac{1}{2}}(\cu_{0};\baconpow) \|\a^{\nicefrac12} \nabla \Psi(u)\|_{\underline{L}^2(r_2 \cu_0)}\nonumber \\
    &\leq&
    \frac{1}{2} \|\a^{\nicefrac12} \nabla \Psi(u)\|_{\underline{L}^2(r_2 \cu_0)}^2 + C 3^{2(k+sh)} \Lambda_s(\cu_0;\baconpow)\,.
  \end{eqnarray*}
  We next select $h \coloneqq k+4$ so that~$3^{2(k+sh)} \leq C (r_2-r_1)^{-4}$. Performing an iteration as in the proof of Proposition~\ref{p.caccioppoli}, we obtain~\eqref{e.log.energy.est}.

  \smallskip

  Turning to the proof of the second statement, we let~$w$ and~$p_{*}$ be as in~\eqref{e.def.w.log}. From the first part of the proof, we have
  \begin{equation*}\|\a^{\nicefrac{1}{2}} \nabla \log w\|_{\underline{L}^2(\frac{1}{2} \cu_0)} = p_{*} \|\a^{\nicefrac{1}{2}} \nabla \log u\|_{\underline{L}^2(\frac{1}{2} \cu_0)} \leq C p_{*} \Lambda_s^{\nicefrac{1}{2}}(\cu_0;\baconpow)
    \,.
  \end{equation*}
  Observe that~$(\log w)_{\frac{1}{2} \cu_0} = 0$ by construction. Applying Lemma~\ref{l.besov.poincare} to $v = \log w$ and using Lemma~\ref{l.besov.fluxes.easy} (note that~$v \in \baconpow(\cu_0)$), we obtain
  \begin{equation*}\|v\|_{\underline{L}^{2^*_t}(\frac{1}{2} \cu_0)} \leq C\|\nabla v\|_{\BesovDualSum{2}{1}{-t}(\frac{1}{2} \cu_0)} \leq C t^{-1}\lambda_t^{-\nicefrac{1}{2}}(\cu_0) \|\a^{\nicefrac{1}{2}} \nabla v\|_{\underline{L}^2(\frac{1}{2} \cu_0)} \leq C t^{-1}p_{*} \Theta_{s,t}^{\nicefrac{1}{2}}(\cu_0)\,.
  \end{equation*}
  Using the definition of~$p_{*}$ yields~\eqref{e.log.w.estimate} and completes the proof.
\end{proof}

\section{Local boundedness}

In this section, we present the proof of the local boundedness estimate (Theorem~\ref{t.local.boundedness.intro}). In fact, we will prove the following more general statement. The proof is a straightforward adaptation of the classical De Giorgi iteration, using the coarse-grained Caccioppoli and Sobolev-Poincar\'e inequalities developed in the previous section.

\begin{theorem}[Local boundedness]\label{t.local.bound}
  Let~$s,t \in (0,1)$ with $\sigma\coloneqq 1-s-t > 0$.
  There exists~$C(d)<\infty$ such that, for every~$u \in \A(\cu_0)$, we have
  \begin{equation*}
    \sup_{\frac{1}{2}\cu_0} u \leq
    \Bigl(
    \tfrac{C^{\frac{1}{1-t}}}{ \sigma^{2} t^{2}} \Theta_{s,t}(\cu_0\,;\bacontrunc)
    \Bigr)^{\! \frac{d}{4 \sigma}} \|u_+\|_{\underline{L}^2(\cu_0)}\,,
  \end{equation*}
  where $u_+ = \max\{u, 0\}$ denotes the positive part of $u$. If $u \in \A_-(\cu_0)$, then the same estimate holds with respect to $\baconsub$ in place of~$\bacontrunc$.
\end{theorem}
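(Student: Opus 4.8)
The plan is to run the classical De Giorgi iteration, using the reverse Hölder inequality for truncations~\eqref{e.onestep.degiorgi} as the single-step gain. First I would fix the solution (or subsolution) $u\in\A(\cu_0)$ (resp.\ $\A_-(\cu_0)$) and, for a level $k>0$ and radius $\varrho\in[\tfrac12,1]$, set $A(k,\varrho)\coloneqq\|(u-k)_+\|_{\underline{L}^2(\varrho\cu_0)}$. For levels $k<\ell$ and radii $\varrho_1<\varrho_2$, the elementary inclusion $\{u>\ell\}\cap\varrho_1\cu_0\subseteq\{u>k\}\cap\varrho_1\cu_0$ together with Chebyshev gives $|\{u>\ell\}\cap\varrho_1\cu_0|\le (\ell-k)^{-2}|\varrho_2\cu_0|\,A(k,\varrho_2)^2$, and then Hölder between $\underline{L}^2$ and $\underline{L}^{2^\ast_t}$ on $\varrho_1\cu_0$ yields
\begin{equation*}
  A(\ell,\varrho_1)\le \|(u-\ell)_+\|_{\underline{L}^{2^\ast_t}(\varrho_1\cu_0)}\,\Bigl(\tfrac{|\{u>\ell\}\cap\varrho_1\cu_0|}{|\varrho_1\cu_0|}\Bigr)^{\nicefrac12-\nicefrac1{2^\ast_t}}.
\end{equation*}
Applying~\eqref{e.onestep.degiorgi} to the right-hand side (with $\bacontrunc$, or $\baconsub$ in the subsolution case, since $(u-\ell)_+$ lies in the corresponding cone) and using $(u-\ell)_+\le(u-k)_+$ to replace $A(\ell,\varrho_2)$ by $A(k,\varrho_2)$, one arrives at a recursive inequality of the form
\begin{equation*}
  A(\ell,\varrho_1)\le \frac{C\,\Gamma}{(\varrho_2-\varrho_1)^{\beta_1}}\,\frac{A(k,\varrho_2)^{1+\gamma}}{(\ell-k)^{\gamma}},
\end{equation*}
where $\Gamma\coloneqq\bigl(\tfrac{C}{\sigma t}\Theta_{s,t}^{\nicefrac12}(\cu_0\,;\bacontrunc)\bigr)^{\frac{s+\sigma}{\sigma}}$, the exponent $\gamma=\tfrac1{2^\ast_t}{}^{-1}\cdot(\tfrac12-\tfrac1{2^\ast_t})$ — more precisely $\gamma=\tfrac{2(1-t)}{d}$ coming from $\tfrac12-\tfrac1{2^\ast_t}=\tfrac{1-t}{d}$ relative to the leftover $\underline{L}^2$ power — is a positive constant depending only on $d,t$, and $\beta_1$ is an explicit exponent in $d$ and $\tfrac{s+\sigma}{\sigma}$.

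Next I would invoke the standard fast-geometric-convergence lemma (of the type in Stampacchia/Giusti): choosing the increasing level sequence $k_j=M(1-2^{-j})$ and the decreasing radius sequence $\varrho_j=\tfrac12(1+2^{-j})$, the quantity $a_j\coloneqq A(k_j,\varrho_j)$ satisfies $a_{j+1}\le C\,B^{j}\,M^{-\gamma}a_j^{1+\gamma}$ for a constant $B=B(d,t)>1$ and $C$ absorbing $\Gamma$ and the radius factors; such a sequence converges to $0$ provided $a_0\le C^{-1/\gamma}B^{-1/\gamma^2}M^{\,?}$, i.e.\ provided
\begin{equation*}
  M\ge C'\,\Gamma^{\nicefrac1\gamma}\,\|u_+\|_{\underline{L}^2(\cu_0)}
\end{equation*}
for a suitable $C'=C'(d,t)$, since $a_0=\|(u)_+\|_{\underline{L}^2(\varrho_0\cu_0)}\le \|u_+\|_{\underline{L}^2(\cu_0)}$. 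For such $M$ we get $\lim_j a_j=0$, hence $\sup_{\frac12\cu_0}u\le M$. Finally I would bookkeep the exponents: $\Gamma^{\nicefrac1\gamma}=\bigl(\tfrac{C}{\sigma t}\Theta^{\nicefrac12}\bigr)^{\frac{s+\sigma}{\sigma}\cdot\frac{d}{2(1-t)}}$, and since $s+\sigma=1-t$ this exponent is exactly $\tfrac{d}{2\sigma}$, giving $\Gamma^{\nicefrac1\gamma}=\bigl(\tfrac{C}{\sigma t}\bigr)^{\nicefrac{d}{2\sigma}}\Theta_{s,t}^{\nicefrac{d}{4\sigma}}(\cu_0\,;\bacontrunc)$, which after rewriting $\bigl(\tfrac{C}{\sigma t}\bigr)^{\nicefrac{d}{2\sigma}}$ as $\bigl(\tfrac{C^{1/(1-t)}}{\sigma^2t^2}\bigr)^{\nicefrac{d}{4\sigma}}$ (using $1-t=s+\sigma\le 1$ to absorb constants, and $\tfrac1{\sigma t}\le \tfrac1{\sigma^2 t^2}$) matches the claimed bound.

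The main obstacle I expect is \emph{purely bookkeeping of the exponents} rather than any genuine analytic difficulty: one must track how the power $\frac{s+\sigma}{\sigma}$ in~\eqref{e.onestep.degiorgi}, the Hölder interpolation exponent $\gamma=\tfrac{2(1-t)}{d}$, and the iteration-lemma threshold combine, and verify they collapse to the clean exponent $\tfrac{d}{4\sigma}$ on $\Theta_{s,t}$ and the stated powers of $\sigma,t$. The only point requiring a little care is that each application of~\eqref{e.onestep.degiorgi} is at \emph{unit scale} relative to the pair $(\varrho_{j+1}\cu_0,\varrho_j\cu_0)$ — so I do not rescale cubes but simply feed the nested radii $\varrho_{j+1}<\varrho_j$ directly into~\eqref{e.onestep.degiorgi}, with the $(\varrho_j-\varrho_{j+1})^{-1}\sim 2^{j+1}$ factors contributing the geometric base $B$; the coarse-grained ellipticity ratio $\Theta_{s,t}(\cu_0\,;\cdot)$ is the one attached to the fixed cube $\cu_0$ throughout, so no scaling relations~\eqref{e.Theta.scaling} are needed here. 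The subsolution case is identical once one notes that $(u-k)_+\in\baconsub(\cu_0)$ whenever $u\in\A_-(\cu_0)$, which was recorded in the discussion following~\eqref{e.admissible.cones}.
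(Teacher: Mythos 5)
Your proposal is correct and follows essentially the same strategy as the paper's proof: both are De Giorgi iterations driven by the reverse H\"older inequality~\eqref{e.onestep.degiorgi}, with Chebyshev and H\"older interpolation supplying the superlinear gain $a_j^{1+\gamma}$ with $\gamma = \nicefrac{2(1-t)}{d}$, and the final exponent on $\Theta_{s,t}$ collapsing to $\nicefrac{d}{4\sigma}$ exactly because $s+\sigma = 1-t$ cancels against $\nicefrac{1}{\gamma} = \nicefrac{d}{2(1-t)}$. The one implementation difference is that you use the classical fixed level sequence $k_j = M(1-2^{-j})$ together with the Stampacchia/Giusti fast-geometric-convergence lemma, whereas the paper builds an adaptive level sequence $k_{m+1}=k_m+\delta_m^{-1}\|u_{k_m}\|_{\underline{L}^2(\hat\cu_m)}$, chooses $\delta_{m+1}$ explicitly so the gaps halve, and sums the resulting geometric series directly; these are two standard and interchangeable presentations of the same iteration. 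One small caution: your closing remark that the constant arising from the iteration-lemma threshold ``matches the claimed bound'' glosses over the contribution $B^{1/\gamma^2}$, which a priori carries an exponent $\sim\nicefrac{1}{(1-t)^2}$, worse than the $\nicefrac{1}{1-t}$ visible in the target constant $\bigl(C^{1/(1-t)}/(\sigma^2 t^2)\bigr)^{d/(4\sigma)}$; the absorption nevertheless works because $\sigma\leq 1-t$ makes $\nicefrac{d}{4\sigma(1-t)}\geq\nicefrac{d}{4(1-t)^2}$, but this is a step worth writing out explicitly rather than waving through.
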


\begin{remark}
  The exponent of the coarse-grained ellipticity ratio
  $\Theta_{s,t}(\cu_0;\bacontrunc(\cu_0))$ in Theorem~\ref{t.local.bound}
  is $\nicefrac{d}{4\sigma}$. In the uniformly elliptic setting one may
  take $s=t=0$, hence $\sigma=1$, and this exponent reduces to
  $\nicefrac{d}{4}$, in agreement with the classical estimate of
  Trudinger~\cite{Trudinger}.
\end{remark}

\begin{proof}
  Denoting $u_k = (u-k)_+$ and construct a sequence of $\varrho_m = \frac{1}{2} + \frac{1}{2^{m+1}}$, we denote the sequence of cubes $\hat \cu_m \coloneqq \varrho_m \cu_0$.
  Now, for $\delta_m \in (0,2^{-d+1}]$ to be fixed, set
  \begin{equation*}k_{m+1} \coloneqq k_m + \delta_m^{-1} \|u_{k_m}\|_{\underline{L}^2(\hat \cu_m)}
    \quad \text{and} \quad
    \tilde k_m \coloneqq \frac{1}{2} (k_{m} + k_{m+1})\,,
  \end{equation*}
  with $k_0 = 0$.
  Applying \cref{e.onestep.degiorgi} to $u_{\tilde k_m}$ in the cubes $\hat \cu_{m+1} \subseteq \hat \cu_m$ we have
  \begin{equation}\label{e.onestep.degiorgi.LB}
    \|u_{\tilde k_m} \|_{\underline{L}^{2^\ast_t}(\hat \cu_{m+1})}
    \leq
    2^{(\frac d2 + \frac{s+\sigma}{\sigma})   m}
    \bigl(
    C \tfrac{1}{\sigma t} \Theta_{s,t}^{\nicefrac{1}{2}}(\cu_0\,;\bacontrunc)
    \bigr)^{ \frac{s+\sigma}{\sigma}}
    \|u_{\tilde k_m}\|_{\underline{L}^2(\hat \cu_m)},
  \end{equation}
  where $2^\ast_t \coloneqq \frac{2d}{d-2(1-t)}$ and~$\sigma = 1 - s - t$.
  By Chebyshev's inequality, we get
  \begin{equation*}\frac{|\hat \cu_{m+1} \cap \{ u > \tilde k_m \}|}{|\hat \cu_{m+1}|} \leq \frac{4 \|u_{k_m}\|_{\underline{L}^{2}(\hat \cu_{m+1})}^2}{(k_{m+1}-k_m)^2} \leq
    C \delta_m^2\,.
  \end{equation*}
  Applying Hölder's inequality together with the above two displays gives us
  \begin{align*}
    \|u_{k_{m+1}}\|_{\underline{L}^2(\hat \cu_{m+1})} & \leq
    \|u_{\tilde k_m}\|_{\underline{L}^{2^\ast_t}(\hat \cu_{m+1})}
    \biggl(
    \frac{|\hat \cu_{m+1} \cap \{u > \tilde k_m\}|}{|\hat \cu_{m+1}|} \biggr)^{\!\frac{1-t}{d}} \\
    & \leq
    2^{(\frac d2 + \frac{s+\sigma}{\sigma})   m}
    \bigl(
    \tfrac{C}{\sigma t}  \Theta_{s,t}^{\nicefrac{1}{2}}(\cu_0\,;\bacontrunc)
    \bigr)^{\! \frac{s+\sigma}{\sigma}}
    \|u_{\tilde k_m}\|_{\underline{L}^2(\hat \cu_m)}
    \biggl( \frac{|\hat \cu_{m+1} \cap \{u > \tilde k_m\}|}{|\hat \cu_{m+1}|} \biggr)^{\!\frac{1-t}{d}}
    \\
    &
    \leq
    2^{(\frac d2 + \frac{s+\sigma}{\sigma})   m}
    \bigl(
    \tfrac{C}{\sigma t}  \Theta_{s,t}^{\nicefrac{1}{2}}(\cu_0\,;\bacontrunc)
    \bigr)^{\! \frac{s+\sigma}{\sigma}}
    (k_{m+1}-k_m) \delta_m^{1 + \frac{2(1-t)}{d}}
    \,.
  \end{align*}
  We now choose
  \begin{equation*}
    \delta_{m+1} \coloneqq   2^{2+(\frac d2 + \frac{s+\sigma}{\sigma})   m}
    \bigl(
    \tfrac{C}{\sigma t}  \Theta_{s,t}^{\nicefrac{1}{2}}(\cu_0\,;\bacontrunc)
    \bigr)^{ \frac{s+\sigma}{\sigma}} \delta_m^{1 + \frac{2(1-t)}{d}}\,,
  \end{equation*}
  which leads to
  \begin{equation}
    \label{e.k.m.bound}
    k_{m+2} - k_{m+1} =
    \delta_{m+1}^{-1} \|u_{k_{m+1}}\|_{\underline{L}^2(\hat \cu_{m+1})} \leq \frac{1}{2}(k_{m+1}-k_m).
  \end{equation}
  We show that if~$\delta_0$ is small enough, then~$\delta_m \to 0$. To this end, define
  \begin{equation*}
    K\coloneqq 4
    \bigl(
    \tfrac{C}{\sigma t}  \Theta_{s,t}^{\nicefrac{1}{2}}(\cu_0\,;\bacontrunc)
    \bigr)^{\! \frac{s+\sigma}{\sigma}}\,, \quad
    \xi \coloneqq \frac d2 + \frac{s+\sigma}{\sigma}
    \quad \mbox{and} \quad
    \varepsilon \coloneqq
    \frac{2(1-t)}{d}
    \,.
  \end{equation*}
  We then have that
  \begin{equation*}
    \delta_{m+1} =   2^{\xi m} K \delta_m^{1 + \ep}
    \,.
  \end{equation*}
  By a direct computation, we deduce the following iterative implication:
  \begin{equation*}\delta_m \leq 2^{- \xi \frac{m}{\varepsilon} - \frac{\xi}{\varepsilon^2}}  K^{-\nicefrac{1}{\varepsilon}} \quad \implies \quad \delta_{m+1} \leq 2^{- \xi \frac{m+1}{\varepsilon} - \frac{\xi}{\varepsilon^2}}  K^{-\nicefrac{1}{\varepsilon}} \,.
  \end{equation*}
  Therefore, if $\delta_0 \leq K^{-\nicefrac{1}{\varepsilon}}2^{- \nicefrac{\xi}{\varepsilon^2}}$, then $\delta_m$ forms a decreasing sequence bounded by $C_0^{-\nicefrac{1}{\varepsilon}} 2^{- \nicefrac{m}{\varepsilon}-\nicefrac{\xi}{\varepsilon^2}}$.

  \smallskip

  Next, by summing~\eqref{e.k.m.bound} we obtain
  \begin{equation*}k_m - k_0 = \sum_{j=0}^{m-1} (k_{j+1} - k_{j}) \leq 2(k_{1}-k_0) = 2 K^{\nicefrac{1}{\varepsilon}}2^{\nicefrac{\xi}{\varepsilon^2}} \|(u-k_0)_+\|_{\underline{L}^2(\cu_0)}\,.
  \end{equation*}
  Thus,~$k_m$ is uniformly bounded and the limit $k_\infty$ exists.   In particular, since~$\delta_m \to 0$, we have
  \begin{equation*}\lim_{m \to \infty} \delta_m^{-1} \|u_{k_m}\|_{\underline{L}^2(\hat \cu_m)} = 0 \implies
    u \leq k_\infty \quad \mbox{a.e.~in } \hat \cu_\infty = \tfrac{1}{2}\cu_0\,.
  \end{equation*}
  This gives the desired result since~$\frac{s+\sigma}{1-t} = 1$.
\end{proof}

\section{The Harnack inequality}
\label{s.Harnack}

\subsection{Moser iteration}

Suppose that $u \in \mathcal{A}(\cu_0)$ is bounded and positive in~$\cu_0$. Then we have by~\eqref{e.onestep.moser} that, for every~$\nicefrac 12 \leq \varrho_1 < \varrho_2 \leq 1$ and~$p\in \R\setminus \{0,1\}$,
\begin{equation}\label{e.onestep.moser.again}
  \|u^{\nicefrac{p}{2}} \|_{\underline{L}^{2^\ast_t}(\varrho_1 \cu_0)}
  \leq
  \frac{C}{(\varrho_2 - \varrho_1)^{\nicefrac d2}} \biggl(
  1+ \biggl(
  \frac{C |p| \Theta_{s,t}^{\nicefrac{1}{2}}(\cu_0\,;\baconpow)}{|p-1| \sigma t (\varrho_2-\varrho_1) }
  \biggr)^{\! \frac{s+\sigma}{\sigma}} \biggr)
  \|u^{\nicefrac{p}{2}}\|_{\underline{L}^2(\varrho_2 \cu_0)}.
\end{equation}
Renaming $v = u^{\sgn(p)}$ and rewriting the above gives, for~$\kappa \coloneqq \frac{d}{d-2(1-t)}$,
\begin{equation} \label{e.moser.step}
  \|v \|^{|p|}_{\underline{L}^{\kappa |p|}(\varrho_1 \cu_0)}
  \leq \frac{C}{(\varrho_2 - \varrho_1)^{d}} \biggl(
  1+ \biggl(
  \frac{C |p| \Theta_{s,t}^{\nicefrac{1}{2}}(\cu_0\,;\baconpow)}{|p-1| \sigma t (\varrho_2-\varrho_1) }
  \biggr)^{\! \frac{s+\sigma}{\sigma}} \biggr)^{\!2}  \|v \|^{|p|}_{\underline{L}^{|p|}(\varrho_2 \cu_0)},
\end{equation}
this is the desired single step inequality for Moser iteration.

\begin{proposition} \label{p.moser.iteration.all.p}
  Let~$s,t \in (0,1)$ with~$\sigma\coloneqq 1-s-t > 0$.
  There exists~$C(d)<\infty$ such that, for every positive and bounded~$u \in \mathcal{A}(\cu_0)$, $\nicefrac12 \leq \varrho_1 < \varrho_2 \leq 1$ and~$p\in \R \setminus [0,1]$, we have
  \begin{equation}
    \label{e.infty.p}
    \|u^{p}\|_{{L}^{\infty}(\varrho_1\cu_0)}
    \leq
    \biggl( \frac{C^{\frac{1}{1-t}}}{(\varrho_2-\varrho_1)^{d}} \left (1 + \frac{|p|}{\sigma t|p-1|} \Theta_{s,t}^{\nicefrac{1}{2}}(\cu_0\,;\baconpow ) \right )  \biggr)^{\!\nicefrac{d}{\sigma}} \|u^{p}\|_{{L}^{1}(\varrho_2 \cu_0)} \,.
  \end{equation}
  Moreover, for every $p,q \in (0,1)$ with $p < q$, and~$u,\varrho_1,\varrho_2$ as above, we have for~$\kappa \coloneqq \frac{d}{d-2(1-t)}$,
  \begin{equation}
    \label{e.small.p}
    \|u^{q\kappa}\|_{\underline{L}^{1}(\varrho_1 \cu_0)}^{\frac{p}{q \kappa}}
    \leq
    \biggl( \frac{C^{\frac{1}{1-t}}}{(\varrho_2-\varrho_1)^{d}} \left (1 + \frac{p}{\sigma t(1-q)} \Theta_{s,t}^{\nicefrac{1}{2}}(\cu_0\,;\baconpow ) \right )  \biggr)^{\!\nicefrac{d}{\sigma}}
    \|u^p\|_{\underline{L}^{1}(\varrho_2 \cu_0)}\,.
  \end{equation}
\end{proposition}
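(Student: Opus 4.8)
The plan is to iterate the single-step Moser inequality~\eqref{e.moser.step}: \eqref{e.infty.p} follows from an \emph{infinite} iteration (Moser iteration to $L^\infty$), and \eqref{e.small.p} from a \emph{finite} one together with a nesting argument. In both cases I would iterate along the exponents $p_j\coloneqq p\kappa^j$ and the radii $\varrho^{(j)}\coloneqq\varrho_1+2^{-j}(\varrho_2-\varrho_1)$, so $\varrho^{(0)}=\varrho_2$, $\varrho^{(j)}\downarrow\varrho_1$, and the $j$-th radius gap is $2^{-j-1}(\varrho_2-\varrho_1)$; the only non-classical feature is that all constants are carried by $\Theta_{s,t}(\cu_0;\baconpow)$.

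For \eqref{e.infty.p}, take $p\in\R\setminus[0,1]$ and put $v\coloneqq u^{\sgn p}$, so $|p_j|=|p|\kappa^j$, $\kappa|p_j|=|p_{j+1}|$, and $v$ is bounded. Substituting $p\leadsto p_j$, $\varrho_1\leadsto\varrho^{(j+1)}$, $\varrho_2\leadsto\varrho^{(j)}$ into~\eqref{e.moser.step} and raising to the power $1/|p_j|$ gives
\[
  \|v\|_{\underline{L}^{\kappa|p_j|}(\varrho^{(j+1)}\cu_0)}
  \le
  \Bigl(\tfrac{C\,2^{d(j+1)}}{(\varrho_2-\varrho_1)^d}\bigl(1+M_j^{\frac{s+\sigma}{\sigma}}\bigr)^{2}\Bigr)^{1/|p_j|}
  \|v\|_{\underline{L}^{|p_j|}(\varrho^{(j)}\cu_0)},
  \qquad
  M_j\coloneqq\frac{C\,2^{j+1}\,|p_j|\,\Theta_{s,t}^{1/2}(\cu_0;\baconpow)}{|p_j-1|\,\sigma t\,(\varrho_2-\varrho_1)}.
\]
Multiplying over $j\ge0$ and letting $j\to\infty$ (the product converges thanks to the decay $\kappa^{-j}$ in the weights and $\sum_j j\kappa^{-j}<\infty$), then raising to the power $|p|$ and using $\|v\|_{L^\infty}^{|p|}=\|u^p\|_{L^\infty}$ and $\|v\|_{\underline{L}^{|p|}}^{|p|}\simeq\|u^p\|_{L^1}$, produces~\eqref{e.infty.p}. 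Here the $j$-th factor enters the final product with weight $|p|/|p_j|=\kappa^{-j}$; since $\tfrac{|p_j|}{|p_j-1|}\le\tfrac{|p|}{|p-1|}$ when $p>1$ (monotonicity of $x\mapsto x/(x-1)$) and $\tfrac{|p_j|}{|p_j-1|}\le1$ when $p<0$, the weighted product of these factors is dominated by $\bigl(\tfrac{|p|}{|p-1|}\bigr)^{2\frac{s+\sigma}{\sigma}\sum_j\kappa^{-j}}$, and the identities $\sum_j\kappa^{-j}=\tfrac\kappa{\kappa-1}$, $\tfrac\kappa{\kappa-1}=\tfrac d{2(1-t)}$, $1-t=s+\sigma$ collapse the exponent to exactly $\tfrac d\sigma$ (and simultaneously turn the $\Theta$- and $(\sigma t)$-powers into $\tfrac d{2\sigma}$ and $\tfrac d\sigma$); the weighted sum of the $(j+1)\log2$ terms yields the power $\tfrac1{1-t}$ on the absolute constant, and the $(\varrho_2-\varrho_1)$-power that comes out, $\tfrac d\sigma+\tfrac{d^2}{2(s+\sigma)}$, is $\le\tfrac{d^2}\sigma$ for $d\ge2$, so the slightly wasteful power $d$ inside the parenthesis of~\eqref{e.infty.p} is more than enough.

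For \eqref{e.small.p} one uses the same single step, but only \emph{finitely} often, keeping every intermediate exponent strictly below $1$ (where \eqref{e.moser.step} degenerates). Given $0<p<q<1$, the plan is: iterate~\eqref{e.moser.step} with $v=u$ through $p_0=p,\dots,p_{N-1}=p\kappa^{N-1}$, with $N$ minimal so $p\kappa^N\ge q$ (so all these exponents are $<q<1$, hence $1-p_j>1-q$ — this is where the factor $(1-q)^{-1}$ comes from); then pass from $\underline{L}^{p\kappa^N}$ down to $\underline{L}^q$ by Jensen's inequality on the normalized cube; and finally apply~\eqref{e.moser.step} once more at exponent $q$ to reach $\underline{L}^{q\kappa}$, on a cube $\varrho^{(N+1)}\cu_0\supseteq\varrho_1\cu_0$. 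Composing, raising to the power $p$, and using $\|u\|_{\underline{L}^{q\kappa}}^{p}=\|u^{q\kappa}\|_{\underline{L}^1}^{p/(q\kappa)}$ and $\|u\|_{\underline{L}^{p}}^{p}=\|u^p\|_{\underline{L}^1}$ gives a bound of the shape of~\eqref{e.small.p}. The factor $\tfrac{p}{\sigma t(1-q)}\Theta^{1/2}$ — featuring $p$ rather than $q$ — appears because step $j$ enters with weight $p/p_j=\kappa^{-j}$, so the exponents $p_j=p\kappa^j$ multiply up to $p^{2\frac{s+\sigma}{\sigma}\sum_j\kappa^{-j}}=p^{d/\sigma}$; the a priori $N$-dependent radius losses from the last steps are controlled uniformly over $0<p<q$ because $\tfrac pqN$ and $\tfrac pq2^{N}$ remain bounded (as $x\log\tfrac1x\to0$).

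I expect the genuinely delicate step to be the constant bookkeeping for \eqref{e.small.p}: the iteration must be routed so that no intermediate exponent comes near $1$ and so that the final step(s) do not inflate the $\Theta$-exponent beyond $\tfrac d{2\sigma}$. A quick check shows the three-part scheme above keeps the $\Theta$-exponent at exactly $\tfrac d{2\sigma}$ precisely when $\kappa\le2$ (equivalently $t\ge1-\tfrac d4$, automatic for $d\ge4$); for $\kappa>2$ (i.e.\ small $t$ in dimensions $2$ and $3$) one would insert a few extra sub-steps, or first prove~\eqref{e.small.p} for $q$ bounded away from $1$ and then bootstrap, to absorb the excess. For \eqref{e.infty.p}, by contrast, everything beyond the classical Moser iteration is already visible above and is elementary.
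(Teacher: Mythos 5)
Your scheme for~\eqref{e.infty.p} is precisely the paper's: iterate the one-step Moser inequality along $p_j=p\kappa^j$ with radii $\varrho_1+2^{-j}(\varrho_2-\varrho_1)$, take logarithms, and use $1-\kappa^{-1}=\tfrac{2(s+\sigma)}{d}$ together with $1-t=s+\sigma$ to collapse the weighted geometric sums into the stated exponents. One small slip, shared with the paper's own displayed estimate: for $p<0$ the ratio $|p_j|/|p_j-1|=|p_j|/(|p_j|+1)$ is \emph{increasing} in $j$, so the bound $|p_j|/|p_j-1|\le|p_0|/|p_0-1|$ fails (the correct uniform bound is $\le 1$); your sentence asserting that the weighted product is "dominated by $(|p|/|p-1|)^{\dots}$" is therefore not right when $p<0$, since $|p|/|p-1|<1$ there. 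This only costs a weaker constant (replacing $|p|/(|p-1|)$ by $1$ in the statement for negative $p$), which is harmless for the Harnack application, but it does mean that neither the paper's sketch nor yours literally delivers the factor $|p|/|p-1|$ for $p<0$ without an extra argument.

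For~\eqref{e.small.p} the paper simply writes "analogous, omitted," whereas you give an actual scheme: a finite run $p\to p\kappa\to\cdots$ kept strictly below $q$ (so every $1-p_j>1-q$), a free Jensen step down from $p\kappa^N$ to $q$, and a single Moser step from $q$ to $q\kappa$. The $\Theta$-exponent bookkeeping you flag is a real point: the total weight is $\sum_{j<N}\kappa^{-j}+p/q$, and since $p/q\le\kappa^{1-N}$ this only stays $\le \tfrac{1}{1-\kappa^{-1}}$ when $\kappa\le 2$; for $\kappa>2$ (small $t$ in $d=2,3$) the raw three-part scheme overshoots and needs the extra sub-steps or bootstrapping you mention. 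So your route is more explicit about a genuine subtlety that the paper suppresses, and it is essentially complete modulo that $\kappa>2$ repair, which is the kind of fix you already correctly describe.
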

The proof of the above proposition relies on already qualitatively knowing the local boundedness of $u$, which we have from Theorem~\ref{t.local.bound}. We also note that~\eqref{e.infty.p} applied to~$p = 2$ coincides with the local boundedness estimate as in Theorem~\ref{t.local.bound}, with the same power of $\Theta_{s,t}$, although with respect to $\bacon^{\scriptscriptstyle{\mathrm{pow}}}(\cu_0)$ instead of $\bacontrunc(\cu_0)$.
\begin{proof}
  Define the sequence of exponents $p_m$ and radii $r_m$ as
  \begin{equation}
    \label{e.p.m.def}
    p_{m+1} \coloneqq \frac{d}{d-2(1-t)} p_m \eqqcolon  \kappa p_m \,, \quad p_0 = p\,,
    \quad \mbox{and} \quad
    r_m = \varrho_1 + \frac{\varrho_2-\varrho_1}{2^m}\,,
  \end{equation}
  with any initial exponent $p_0 \in \R \setminus [0,1]$,
  and the sequence of cubes $\hat \cu_m \coloneqq r_m \cu_0$. Then applying \cref{e.moser.step} with $p = p_m$ and $\varrho_1 = r_{m+1}$, $\varrho_2 = r_m$, we obtain
  \begin{equation*}\|v \|_{\underline{L}^{|p_{m+1}|}(\hat \cu_{m+1})}
    \leq \frac{(C2^{md})^{\frac{1}{|p_{m}|}}}{(\varrho_2 - \varrho_1)^{\frac{d}{|p_m|}} } \biggl(
    1+ \biggl(
    \frac{C 2^m  |p_m| \Theta_{s,t}^{\nicefrac{1}{2}}(\cu_0;\baconpow)}{|p_m-1| \sigma t (\varrho_2 - \varrho_1)}
    \biggr)^{\! \frac{s+\sigma}{\sigma}} \biggr)^{\!\frac{2}{|p_m|}} \| v \|_{\underline{L}^{|p_m|}(\hat \cu_m)}
  \end{equation*}
  which we can iterate to get
  \begin{equation*}\|v\|_{\underline{L}^{\infty}(\varrho_{1} \cu_{0})}
    \leq
    C   \|v\|_{\underline{L}^{|p|}(\varrho_{2}\cu_{0})} \prod_{m=0}^{\infty} \frac{2^{\frac{md}{|p_m|}}}{(\varrho_2 - \varrho_1)^{\frac{d}{|p_m|}} }
    \biggl(
    1+ \biggl( \frac{C 2^m  |p_m| \Theta_{s,t}^{\nicefrac{1}{2}}(\cu_0;\baconpow)}{|p_m-1| \sigma t(\varrho_2 - \varrho_1)}
    \biggr)^{\! \frac{s+\sigma}{\sigma}} \biggr)^{\!\frac{2}{|p_m|}}
  \end{equation*}
  Denote, for short,~$K \coloneqq C (\sigma t)^{-1} (\varrho_2 - \varrho_1)^{-1}\Theta_{s,t}^{\nicefrac{1}{2}}(\cu_0\,;\baconpow)$. Taking logarithm of the product above, we deduce that
  \begin{align*}
    \lefteqn{
      \sum_{j=0}^\infty
      \biggl( \frac{d j}{|p_j|} \log 2 + \frac{d}{|p_j|} \bigl| \log ( \varrho_2-\varrho_1 ) \bigr|  + \frac{2}{|p_j|} \log \biggl( 1 + \Bigl ( \frac{|p_j|2^j}{|p_j-1|}  K \Bigr)^{\frac{s+\sigma}{\sigma}} \biggr) \biggr)
    } \quad &
    \notag    \\ &
    \leq
    \frac{1}{|p_0|} \biggl( \frac{\log C}{(1-\kappa^{-1})^2}
    +
    \frac{d}{(1-\kappa^{-1})} \bigl| \log \left( \varrho_2-\varrho_1 \right) \bigr|
    +
    \frac{2(s+\sigma)}{\sigma(1-\kappa^{-1})} \log \Bigl(1+\frac{|p_{0}|K}{|p_{0}-1|} \Bigr)
    \biggr)
    \notag    \\ &
    \leq
    \frac{1}{|p_0|} \biggl( \frac{\log C}{(s+\sigma)\sigma}
    +
    \frac{d^2} {2(s+\sigma)} \bigl| \log ( \varrho_2-\varrho_1 ) \bigr|
    + \frac{d}{\sigma} \log \Bigl(1+\frac{C|p_{0}| K}{|p_0-1|} \Bigr)  \biggr)
    \,,
  \end{align*}
  where in the last line we used~$1-\kappa^{-1}  = \frac{2(s+\sigma)}{d}$. Taking exponential of the above and combining the previous two displays yields the result.

  \smallskip

  The proof of~\eqref{e.small.p} is analogous, and we omit it.
\end{proof}

\subsection{The crossover lemma}
\label{ss.crossover}

We have two crossovers to perform, for positive exponent we cross at $p=1$ and for negative exponents we cross at $p=0$. The first one is easy, from~\eqref{e.small.p} we can find $q < 1$ such that $\frac{q d}{d-2(1-t)} > 1$, then use this as $p_0$ for~\eqref{e.infty.p} to get that we can pass from any $p \in (0,1)$ to any $q \geq p$. The second crossover is more delicate and requires the use of the Bombieri lemma.

\begin{lemma} [Crossover lemma] \label{l.crossover}
  Let~$s,t \in (0,1)$ with~$\sigma\coloneqq 1-s-t > 0$, and let~$u$ be a nonnegative weak solution of~$-\nabla \cdot \a\nabla u  = 0$ in~$\cu_0$. There exist constants~$C = C(d)<\infty$ and~$c = c(d) \in (0,\nicefrac{1}{2}]$
  such that
  \begin{equation*}\|u^{p_\ast}\|_{\underline{L}^1(\frac{1}{4}\cu_0)}
    \|u^{-p_\ast}\|_{\underline{L}^{1}(\frac{1}{4}\cu_0)}
    \leq
    \exp\bigl( C^{\frac{1}{\sigma(1-t)}} (\tfrac{C}{\sigma})^{\nicefrac{C}{\sigma}} \bigr)
    \quad \mbox{with} \quad
    p_\ast \coloneqq c t \Theta_{s,t}^{-\nicefrac{1}{2}}(\cu_0\,;\baconpow(\cu_0))
    \,.
  \end{equation*}
\end{lemma}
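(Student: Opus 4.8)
We outline the argument, which is the Bombieri--Giusti crossover with the coarse-grained Moser step and logarithmic estimate in place of their classical analogues. First one reduces to the case that $u$ is strictly positive and bounded on $\tfrac12\cu_0$: the function $u+\eps$ still solves the equation, is bounded by interior local boundedness (Theorem~\ref{t.local.bound}, or~\eqref{e.infty.p}), and every estimate below is uniform in $\eps>0$, so the general case follows on letting $\eps\downarrow0$. Let $c=c(d)$ be the constant from Proposition~\ref{c.log.caccioppoli}, put $p_\ast\coloneqq ct\,\Theta_{s,t}^{-\nicefrac12}(\cu_0\,;\baconpow)\in(0,\tfrac12]$, and let $w\coloneqq\exp\!\bigl(-(\log u^{p_\ast})_{\frac12\cu_0}\bigr)\,u^{p_\ast}$ be as in~\eqref{e.def.w.log}, so that $(\log w)_{\frac12\cu_0}=0$. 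By~\eqref{e.log.w.estimate} we have $\|\log w\|_{\underline{L}^{2^\ast_t}(\frac12\cu_0)}\le1$, and the same holds for $\log(w^{-1})=-\log w$. Since $2^\ast_t>1$, Chebyshev's inequality upgrades this to the weak-type bound
\begin{equation*}
  \bigl|\{x\in\tfrac12\cu_0:\ \pm\log w(x)>\lambda\}\bigr|\le\lambda^{-1}\bigl|\tfrac12\cu_0\bigr|\qquad\text{for every }\lambda>0\,;
\end{equation*}
that is, $\log w$ and $\log(w^{-1})$ lie in weak $L^1(\tfrac12\cu_0)$ with quasi-norm at most $1$.

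Next I record the reverse Hölder chains for $w$ and $w^{-1}$. Inserting $p=\beta p_\ast$ into the one-step Moser inequality~\eqref{e.moser.step} (so that $v=u$) and $p=-\beta p_\ast$ into it (so that $v=u^{-1}$), and raising the resulting inequalities to the power $p_\ast$, yields: for $v\in\{w,w^{-1}\}$, $0<\beta\le1$ and $\tfrac12\le\varrho_1<\varrho_2\le1$,
\begin{equation*}
  \|v\|_{\underline{L}^{\kappa\beta}(\varrho_1\cu_0)}\le\Bigl(\tfrac{C_0}{(\varrho_2-\varrho_1)^{\nu}}\Bigr)^{\!\nicefrac1\beta}\|v\|_{\underline{L}^{\beta}(\varrho_2\cu_0)}\,,\qquad\kappa\coloneqq\tfrac{d}{d-2(1-t)}>1\,.
\end{equation*}
The decisive point is the exact cancellation $p_\ast\,\Theta_{s,t}^{\nicefrac12}(\cu_0\,;\baconpow)=ct$: the $\Theta_{s,t}$-dependent factor $\tfrac{|p|\,\Theta_{s,t}^{\nicefrac12}}{|p-1|\,\sigma t\,(\varrho_2-\varrho_1)}$ appearing in~\eqref{e.moser.step} is then $\le\tfrac{C\beta}{\sigma(\varrho_2-\varrho_1)}$ for $0<\beta\le1$, uniformly in $\a$ (one also uses $\beta p_\ast\le\tfrac12$, so the factor $|p-1|^{-1}$ is harmless), whence $C_0=C_0(s,t,d)\le(C/\sigma)^{C(1-t)/\sigma}$ and $\nu=\nu(s,t,d)\le C\bigl(d+(1-t)/\sigma\bigr)$ depend only on $s,t,d$. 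There is a minor bookkeeping point: the crossover lemma below is applied with the cube $Q=\tfrac12\cu_0$ (where the logarithmic tail bound is centered), so the chains are really needed between cubes $\mu'\cu_0\subseteq\mu\cu_0$ with $\tfrac14\le\mu'<\mu\le\tfrac12$; this is obtained from the same computation applied on triadic subcubes $z+\cu_{-k}$ with $3^{-k}\simeq\mu-\mu'$, combined with a covering argument and the scaling relations~\eqref{e.Theta.scaling} and~\eqref{e.ellipticity.scales}, at the cost of enlarging $\nu$ by an additive dimensional constant.

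With both reverse Hölder chains and the weak $L^1$ bounds on $\log w$ and $\log(w^{-1})$ in hand, the Bombieri--Giusti measure-theoretic lemma (the substitute for John--Nirenberg) applies on $Q=\tfrac12\cu_0$ to $w$ and to $w^{-1}$ separately and gives, with $\tfrac12 Q=\tfrac14\cu_0$,
\begin{equation*}
  \|w\|_{\underline{L}^{1}(\frac14\cu_0)}\le C'\qquad\text{and}\qquad\|w^{-1}\|_{\underline{L}^{1}(\frac14\cu_0)}\le C'\,,
\end{equation*}
where $C'$ depends only on $\kappa$, $\nu$, $C_0$, the weak $L^1$ constant ($=1$) and $d$. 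Since $u^{p_\ast}=e^{m}w$ and $u^{-p_\ast}=e^{-m}w^{-1}$ with $m=(\log u^{p_\ast})_{\frac12\cu_0}$, the normalizing factors cancel in the product, so
\begin{equation*}
  \|u^{p_\ast}\|_{\underline{L}^1(\frac14\cu_0)}\,\|u^{-p_\ast}\|_{\underline{L}^1(\frac14\cu_0)}=\|w\|_{\underline{L}^1(\frac14\cu_0)}\,\|w^{-1}\|_{\underline{L}^1(\frac14\cu_0)}\le(C')^2\,.
\end{equation*}
It then remains to verify that $(C')^2$ is dominated by $\exp\!\bigl(C^{1/(\sigma(1-t))}(C/\sigma)^{\nicefrac C\sigma}\bigr)$: propagating $C_0\le(C/\sigma)^{C(1-t)/\sigma}$, $\nu\le C(d+(1-t)/\sigma)$ and $(\kappa-1)^{-1}\simeq d/(1-t)$ through the (polynomial-in-$C_0$ and $\nu$, power-of-$(\kappa-1)^{-1}$) dependence of the Bombieri--Giusti constant gives a bound of the shape $\exp\!\bigl(\mathrm{poly}(1/\sigma,1/(1-t),d)\cdot\log(C/\sigma)\bigr)$, which the stated right-hand side absorbs once $C=C(d)$ is taken large enough.

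The only conceptually substantial ingredient, the Bombieri--Giusti interpolation lemma, is classical and self-contained and I would simply cite it. Within the present proof the real work is twofold, and I expect the second aspect of (i) to be the main obstacle: (i) the arithmetic exhibiting the cancellation of $\Theta_{s,t}^{\nicefrac12}$ between the Moser step and $p_\ast^{-1}=(ct)^{-1}\Theta_{s,t}^{\nicefrac12}$—which is what makes the reverse Hölder chains, and hence the final constant, independent of $\a$—followed by the careful tracking of $C_0,\nu,\kappa$ through the crossover so as to land on exactly the stated exponents in $\sigma$ and $1-t$; and (ii) transferring the unit-scale Moser step~\eqref{e.moser.step} to the dyadic family of cubes inside $\tfrac12\cu_0$ through~\eqref{e.Theta.scaling}--\eqref{e.ellipticity.scales}, which is routine but must be done cleanly to keep $\nu$ under control.
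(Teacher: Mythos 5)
Your proposal is correct and follows essentially the same route as the paper: construct the normalized $w$ via Proposition~\ref{c.log.caccioppoli}, combine the resulting logarithmic bound with a reverse H\"older chain from the Moser step (using the cancellation $p_\ast\Theta_{s,t}^{\nicefrac12}=ct$), and close with a Bombieri--Giusti lemma applied to $w$ and $w^{-1}$ separately. The only inessential deviations are that you pass from the $L^{2^\ast_t}$ bound of~\eqref{e.log.w.estimate} to a weak-$L^1$ bound via Chebyshev and then invoke a weak-$L^1$ variant of Bombieri--Giusti, whereas the paper's Lemma~\ref{l.bombieri} uses the strong $L^1$ bound on $\log w$ obtained directly from H\"older, and you feed the one-step inequality~\eqref{e.moser.step} into the crossover lemma rather than the already-iterated~\eqref{e.small.p} and~\eqref{e.infty.p}. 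You are also more careful than the paper's own exposition on the bookkeeping point of rescaling Lemma~\ref{l.bombieri} to the cube $\tfrac12\cu_0$ where the logarithmic bound is centered (the paper applies it without comment and lands on $\tfrac14\cu_0$); your sketch of a triadic covering argument and the scaling relations~\eqref{e.Theta.scaling}--\eqref{e.ellipticity.scales} is the right way to make this rigorous.
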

\begin{proof}
  By Proposition~\ref{c.log.caccioppoli}, we have that
  \begin{equation*}w \coloneqq \exp \left ( - (\log u^{p_\ast} )_{\frac{1}{2} \cu_0} \right ) u^{p_\ast}
    \quad \mbox{with} \quad p_\ast \coloneqq c t \Theta_{s,t}^{-\nicefrac{1}{2}}(\cu_0\,;\baconpow) \in (0,\nicefrac12 ]\,,
  \end{equation*}
  satisfies
  \begin{equation}\label{e.bombieri.condition}
    \left\|\log w\right\|_{L^1(\frac{1}{2}\cu_0)} \leq 1.
  \end{equation}
  Using~\eqref{e.small.p}, we obtain
  \begin{equation}
    \label{e.small.p.applied}
    \|u\|_{\underline{L}^{\frac{q d}{d-2(1-t)}}(\varrho_1 \cu_0)}^p
    \leq
    \biggl( \frac{C^{\frac{1}{1-t}}}{(\varrho_2-\varrho_1)^{d}} \biggl (1 + \frac{p}{\sigma t(1-q)} \Theta_{s,t}^{\nicefrac{1}{2}}(\cu_0\,;\baconpow ) \biggr )  \biggr)^{\!\nicefrac{d}{\sigma}}
    \|u^{p}\|_{\underline{L}^{1}(\varrho_2 \cu_0)}\,.
  \end{equation}
  Applying the above with~$ p = \hat{p} p_\ast$ and $q = \hat q p_\ast$ with~$0<\hat{p} \leq \hat{q} \leq 1$ gives us\begin{equation*}\|w \|_{\underline{L}^{\hat{q}}(\varrho_1 \cu_0)}
    \leq
    \biggl( \frac{C^{\frac{1}{1-t}}}{\sigma (\varrho_2-\varrho_1)^{d}}  \biggr)^{\!\frac{d}{\hat{p} \sigma}}
    \|w\|_{\underline{L}^{\hat{p}}(\varrho_2 \cu_0)}
  \end{equation*}
  Thus, we can apply Lemma~\ref{l.bombieri} to~$w$ and obtain that there exists a constant $C(d)<\infty$ such that
  \begin{equation*}\|w\|_{\underline{L}^{1}(\frac{1}{4} \cu_0)}
    \leq
    \exp\bigl ( C^{\frac{1}{\sigma(1-t)}} (\tfrac{C}{\sigma})^{\nicefrac{C}{\sigma}} \bigr )
    \,.
  \end{equation*}
  Repeating the same argument for $u^{-1}$, using instead~\eqref{e.infty.p}, we get the desired result.
\end{proof}

\subsection{Proof of the Harnack inequality}

We are now ready to present the proof of Theorem~\ref{t.Harnack.intro}. In fact, we prove the following statement containing a more explicit estimate.

\begin{theorem}[Harnack inequality]
  \label{t.Harnack}
  Let $s,t \in (0,1)$ with~$s+t<1$.
  There exists a constant $C(d)<\infty$ such that, for every positive and bounded~$u \in \mathcal{A}(\cu_0)$, we have for $\sigma\coloneqq 1-s-t > 0$
  \begin{equation}
    \sup_{\frac{1}{8}\cu_0} u
    \leq
    \exp\Bigl(
    C^{\frac{1}{\sigma(1-t)}} (\tfrac{C}{\sigma})^{\nicefrac{C}{\sigma}} t^{-1}
    \Theta_{s,t}^{\nicefrac{1}{2}}(\cu_0\,;\baconpow) \Bigr)
    \inf_{\frac{1}{8}\cu_0} u
    \,.
    \label{e.Harnack}
  \end{equation}
\end{theorem}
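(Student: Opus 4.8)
The plan is to run the classical Moser iteration together with the Bombieri--Giusti crossover, now driven entirely by the coarse-grained reverse Hölder inequalities \eqref{e.infty.p}--\eqref{e.small.p} of Proposition~\ref{p.moser.iteration.all.p} and by the crossover Lemma~\ref{l.crossover}; the qualitative local boundedness supplied by Theorem~\ref{t.local.bound} is what makes these iterations legitimate for the given bounded positive $u$. Everything is organized around the critical exponent $p_\ast\coloneqq c\,t\,\Theta_{s,t}^{-\nicefrac{1}{2}}(\cu_0;\baconpow)\in(0,\nicefrac{1}{2}]$ of Lemma~\ref{l.crossover}, which satisfies $\nicefrac{1}{p_\ast}=(ct)^{-1}\Theta_{s,t}^{\nicefrac{1}{2}}(\cu_0;\baconpow)$. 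The goal is to establish one-sided $L^\infty$-to-$L^1$ bounds, from above for $u^{p_\ast}$ and from below via $u^{-p_\ast}$, on the nested cubes $\frac18\cu_0\subseteq\frac14\cu_0$, then to divide $\sup_{\frac18\cu_0}u=\|u^{p_\ast}\|_{L^\infty(\frac18\cu_0)}^{1/p_\ast}$ by $\inf_{\frac18\cu_0}u=\|u^{-p_\ast}\|_{L^\infty(\frac18\cu_0)}^{-1/p_\ast}$ and absorb the product $\|u^{p_\ast}\|_{\underline{L}^1(\frac14\cu_0)}\|u^{-p_\ast}\|_{\underline{L}^1(\frac14\cu_0)}$ by Lemma~\ref{l.crossover}. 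Since Proposition~\ref{p.moser.iteration.all.p} is phrased for radii in $[\nicefrac{1}{2},1]$, one applies it after a routine rescaling to these nested cubes (with an intermediate cube in between), using the scaling estimate \eqref{e.Theta.scaling}, which costs only a dimensional multiple of $\Theta_{s,t}(\cu_0;\baconpow)$ for the coarse-grained ratio of the rescaled field.

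\smallskip

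For the negative side, $-p_\ast\in\R\setminus[0,1]$ lies in the allowed range of \eqref{e.infty.p}, so I would apply it directly with $p=-p_\ast$. The decisive point is that at this critical exponent $\tfrac{|p|}{|p-1|}=\tfrac{p_\ast}{1+p_\ast}\le p_\ast$, so $\tfrac{|p|}{\sigma t|p-1|}\,\Theta_{s,t}^{\nicefrac{1}{2}}\le\tfrac{p_\ast}{\sigma t}\Theta_{s,t}^{\nicefrac{1}{2}}=\tfrac{c}{\sigma}$ is $\Theta_{s,t}$-free; hence the resulting constant $M_-=M_-(d,s,t)$ is a pure function of the exponents, with $\log M_-\le C^{1/(\sigma(1-t))}(C/\sigma)^{C/\sigma}$. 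For the positive side, $p_\ast\in(0,1)$ lies in the forbidden range of \eqref{e.infty.p}, so I would first carry out the easy crossover of Section~\ref{ss.crossover}: apply \eqref{e.small.p} at $p=p_\ast$ with a fixed $q\in(0,1)$ chosen so that $q\kappa>1$, where $\kappa\coloneqq\tfrac{d}{d-2(1-t)}>1$ (e.g.\ $q=\tfrac12(1+\kappa^{-1})$, so that $1-q$ is of order $\tfrac{1-t}{d}$); then $\tfrac{p}{\sigma t(1-q)}\Theta_{s,t}^{\nicefrac{1}{2}}$ is again $\Theta_{s,t}$-free, so this step contributes a pure-exponent constant $M_1=M_1(d,s,t)$ with $\log M_1\le C^{1/(\sigma(1-t))}(C/\sigma)^{C/\sigma}$, eventually raised to the large power $\nicefrac{1}{p_\ast}$. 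The resulting moment $u^{q\kappa}$ has order-one exponent $q\kappa>1\notin[0,1]$, so \eqref{e.infty.p} applies to it; now $\tfrac{q\kappa}{q\kappa-1}$ is of order $\tfrac{d}{1-t}$, so the corresponding prefactor $M_2$ does carry a genuine $\Theta_{s,t}^{\nicefrac{1}{2}}$, but only polynomially, $\log M_2\lesssim\tfrac{d}{\sigma}\bigl(\tfrac{\log C}{1-t}+\tfrac{d}{\sigma t(1-t)}\Theta_{s,t}^{\nicefrac{1}{2}}\bigr)$; crucially $M_2$ is raised only to the harmless power $\tfrac{1}{q\kappa}\le1$. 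Chaining the two steps gives $\|u^{p_\ast}\|_{L^\infty(\frac18\cu_0)}\le M_2^{1/(q\kappa)}M_1^{1/p_\ast}\|u^{p_\ast}\|_{\underline{L}^1(\frac14\cu_0)}$, and the negative side gives $\|u^{-p_\ast}\|_{L^\infty(\frac18\cu_0)}\le M_-\|u^{-p_\ast}\|_{\underline{L}^1(\frac14\cu_0)}$.

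\smallskip

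Combining the two one-sided bounds and applying Lemma~\ref{l.crossover}, which yields $\|u^{p_\ast}\|_{\underline{L}^1(\frac14\cu_0)}\|u^{-p_\ast}\|_{\underline{L}^1(\frac14\cu_0)}\le\exp\bigl(C^{1/(\sigma(1-t))}(C/\sigma)^{C/\sigma}\bigr)$, one obtains
\[
\frac{\sup_{\frac18\cu_0}u}{\inf_{\frac18\cu_0}u}\ \le\ M_2^{1/(q\kappa)}\,(M_1M_-)^{1/p_\ast}\,\exp\!\Bigl(C^{1/(\sigma(1-t))}(C/\sigma)^{C/\sigma}\,(ct)^{-1}\Theta_{s,t}^{\nicefrac{1}{2}}(\cu_0;\baconpow)\Bigr).
\]
The main --- essentially the only --- obstacle is the constant bookkeeping needed to identify this with the right-hand side of \eqref{e.Harnack}. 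The structural point is that the two factors raised to the large power $\nicefrac{1}{p_\ast}\sim\Theta_{s,t}^{\nicefrac{1}{2}}/t$ are precisely $M_1$ and $M_-$, which are $\Theta_{s,t}$-independent, so $(M_1M_-)^{1/p_\ast}=\exp\bigl((ct)^{-1}\Theta_{s,t}^{\nicefrac{1}{2}}\log(M_1M_-)\bigr)\le\exp\bigl(C^{1/(\sigma(1-t))}(C/\sigma)^{C/\sigma}t^{-1}\Theta_{s,t}^{\nicefrac{1}{2}}\bigr)$; whereas the single factor that genuinely carries $\Theta_{s,t}^{\nicefrac{1}{2}}$, namely $M_2$, appears at the innocuous power $\tfrac{1}{q\kappa}\le1$, so $M_2^{1/(q\kappa)}\le M_2$ is merely polynomial in $\Theta_{s,t}^{\nicefrac{1}{2}}$ and is absorbed into the same exponential using $\tfrac{1}{1-t}\le\tfrac{1}{\sigma}$, $\Theta_{s,t}\ge1$ and $t^{-1}\ge1$. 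All the hard analysis --- the coarse-grained Caccioppoli and Sobolev--Poincaré inequalities, the reverse Hölder/Moser steps, the logarithmic estimate and the Bombieri lemma --- has already been done upstream, so what remains is exactly this careful splitting of exponents (so that the prefactors multiplied by $\nicefrac{1}{p_\ast}$ are $\Theta_{s,t}$-free) and the ensuing arithmetic to land precisely on $\exp\bigl(C^{\frac{1}{\sigma(1-t)}}(\tfrac{C}{\sigma})^{\nicefrac{C}{\sigma}}t^{-1}\Theta_{s,t}^{\nicefrac{1}{2}}(\cu_0;\baconpow)\bigr)$.
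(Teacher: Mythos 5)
Your proposal is correct and takes essentially the same route as the paper: negative exponent $-p_\ast$ handled directly by \eqref{e.infty.p} (where $|p_\ast|\Theta^{1/2}_{s,t}\sim t$ makes the prefactor $\Theta$-free), positive exponent $p_\ast$ handled by chaining \eqref{e.small.p} (again $\Theta$-free since $p_\ast\Theta^{1/2}_{s,t}\sim t$) with \eqref{e.infty.p} at an exponent $>1$, then Lemma~\ref{l.crossover} to glue the two sides; the paper chooses $q=1/\sqrt\kappa$ where you choose $q=\tfrac12(1+\kappa^{-1})$, but this is immaterial. Only one minor slip: the intermediate display for the positive side should read $\|u^{p_\ast}\|_{L^\infty(\frac18\cu_0)}\le M_2^{p_\ast/(q\kappa)}M_1\|u^{p_\ast}\|_{\underline L^1(\frac14\cu_0)}$ (equivalently $\|u\|_{L^\infty}\le M_2^{1/(q\kappa)}M_1^{1/p_\ast}\|u^{p_\ast}\|_{\underline L^1}^{1/p_\ast}$), not $M_2^{1/(q\kappa)}M_1^{1/p_\ast}\|u^{p_\ast}\|_{\underline L^1}$; your final display for $\sup u/\inf u$ is nonetheless correct, so this is merely a transcription error and not a gap.
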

\begin{proof}
  Let~$p_\ast$ be as in Lemma~\ref{l.crossover} and~$\kappa \coloneqq \frac{d}{d-2(1-t)}$.
  Then we have, for~$\sigma\coloneqq 1-s-t > 0$,
  \begin{equation*}
    \|u^{p_\ast} \|_{\underline{L}^{1}(\frac{1}{4}\cu_0)}
    \|u^{-p_\ast}\|_{\underline{L}^{1}(\frac{1}{4}\cu_0)}
    \leq
    \exp\bigl( C^{\frac{1}{\sigma(1-t)}} (\tfrac{C}{\sigma})^{\nicefrac{C}{\sigma}} \bigr)
    \,.
  \end{equation*}
  An application of Proposition~\ref{p.moser.iteration.all.p} yields
  \begin{equation*}
    \|u^{-p_\ast}\|_{{L}^{\infty}(\frac18 \cu_0)}
    \leq
    \Bigl( C^{\frac{1}{1-t}}
    \bigl(1 + (\sigma t)^{-1 }|p_\ast| \Theta_{s,t}^{\nicefrac{1}{2}}(\cu_0\,;\baconpow ) \bigr)
    \Bigr)^{\nicefrac{d}{\sigma}} \|u^{-p_\ast}\|_{{L}^{1}(\frac14 \cu_0)}
    \,,
  \end{equation*}
  and applying~\eqref{e.infty.p} with~$p = \sqrt{\kappa}$ and then~\eqref{e.small.p} with~$p = p_\ast$ and~$q = \frac{1}{\sqrt{\kappa}}$ gives
  \begin{equation*}
    \|u^{p_\ast}\|_{{L}^{\infty}(\frac18 \cu_0)}
    \leq
    \Bigl( C^{\frac{1}{1-t}}
    \bigl(1 + (\sigma t)^{-1 } \Theta_{s,t}^{\nicefrac{1}{2}}(\cu_0\,;\baconpow ) \bigr)
    \Bigr)^{\nicefrac{d}{\sigma}} \|u^{p_{*}}\|_{{L}^{1}(\frac14 \cu_0)}
    \,.
  \end{equation*}
  Combining the above three displays yields
  \begin{equation*}
    \|u^{-p_\ast}\|_{{L}^{\infty}(\frac14 \cu_0)}
    \|u^{p_\ast}\|_{{L}^{\infty}(\frac14 \cu_0)}
    \leq
    \Bigl( C^{\frac{1}{1-t}} \bigl(1 + (\sigma t)^{-1 } \Theta_{s,t}^{\nicefrac{1}{2}}(\cu_0\,;\baconpow ) \bigr)  \Bigr)^{\!\nicefrac{d}{\sigma}}
    \exp\bigl( C^{\frac{1}{\sigma(1-t)}} (\tfrac{C}{\sigma})^{\nicefrac{C}{\sigma}} \bigr)
    \,,
  \end{equation*}
  which yields~\eqref{e.Harnack}.
\end{proof}

\begin{remark}[Optimality of the Harnack constant]
  \label{rem.optimal}
  To see that~$\exp(C\Theta^{\nicefrac12})$ is the optimal constant for the Harnack inequality in terms of dependence on the ellipticity ratio~$\Theta$, even for constant coefficient equations, observe that~$u(x_1,x_2) \coloneqq \exp( \Lambda^{\nicefrac12} x_1) \cos (x_2)$ is a smooth solution of
  \begin{equation}
    -\nabla \cdot \a_0 \nabla u = 0 \quad \mbox{in} \ \R^2\,,
    \quad \mbox{where} \quad
    \a_0 \coloneqq e_1\otimes e_1 + \Lambda e_2\otimes e_2
    \,,
  \end{equation}
  and that~$u$ is positive in the ball~$B_{\nicefrac \pi 2}$ and satisfies~$\sup_{B_r} u = u(0) \exp\big( \Lambda^{\frac12} r\big)$ for each~$r\in (0,\nicefrac \pi 2]$.
\end{remark}

\section{Examples}
\label{app.examples}

\subsection{Simple example}
We next record a simple example showing that the Besov condition~\eqref{e.Sobolev.ellipticity.condition} allows coefficient fields which are merely in~$L^1$ but not
in~$L^{1+\delta}$ for any~$\delta>0$. In the following example, we construct a coefficient field which does not belong to any~$L^p$-space for~$p>1$, but is nevertheless coarse-grained elliptic. For simplicity, in all our examples we assume the uniform lower bound~$\a \geq \Id$. 

\begin{proposition}
\label{p.example.L1.Besov.notLp1}
Let $s\in(1/2,1)$. Then there exists a nonnegative scalar coefficient field
$a \in L^1(\cu_0)$, such that
\begin{equation*}
a \not\in L^{1+\delta}(\cu_0)
\ \ \text{for every } \delta>0\qquad \mbox{and}\qquad \Lambda_{s}(\cu_{0};\bacon)<\infty.
\end{equation*}
\end{proposition}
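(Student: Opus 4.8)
The plan is to take $a=1+f$ with $f\ge0$ supported on the complement of a self-similar Cantor set $F\subseteq\cu_0$ of Hausdorff dimension $d-1$, where $f$ is constant equal to $c_n\coloneqq 3^n/n^2$ on the part of $\cu_0\setminus F$ discarded at the $n$-th generation of the construction. Two features are essential. The slowly growing factor $n^{-2}$ places $a$ exactly on the boundary between $L^1$ and $L^{1+\delta}$: the generation-$n$ piece has measure $\simeq 3^{-n}$, so its $L^1$-contribution is $\simeq n^{-2}$ (summable) while its $L^{1+\delta}$-contribution is $\simeq 3^{\delta n}n^{-2(1+\delta)}$ (not summable, for every $\delta>0$). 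Using a genuine $(d-1)$-dimensional Cantor set, rather than a single spike, keeps the field spread out at every scale, which is what forces the triadic averages of $a$ on a cube of side $3^{-m}$ to grow no faster than $3^m$; since $s>\nicefrac12$, that growth rate is slow enough for the series defining $\Lambda_s(\cu_0;\bacon)$ to converge.

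In detail, set $M\coloneqq3^{d-1}$. Let $\mathcal Q_0\coloneqq\{\cu_0\}$, and given the family $\mathcal Q_n$ of $M^n$ triadic cubes of side $3^{-n}$, let $\mathcal Q_{n+1}$ consist of a fixed choice of $M$ of the $3^d$ triadic children of each cube of $\mathcal Q_n$. For $n\ge1$ let $D_n$ be the union of the $3^d-M=2\cdot3^{d-1}$ discarded children of the cubes of $\mathcal Q_{n-1}$; the $D_n$ are pairwise disjoint, $|D_n|=2\cdot3^{-n}$, and $\bigl|\bigcup_n D_n\bigr|=1$. Put
\begin{equation*}
  f\coloneqq\sum_{n\ge1}\frac{3^n}{n^2}\,\indc_{D_n},\qquad a\coloneqq1+f.
\end{equation*}
Then $a$ is scalar, nonnegative, and $a\ge\Id$. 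The integrability statements are immediate: $\int_{\cu_0}f=2\sum_{n\ge1}n^{-2}<\infty$, so $a\in L^1(\cu_0)$; whereas $\int_{\cu_0}a^{1+\delta}\ge\int_{\cu_0}f^{1+\delta}=2\sum_{n\ge1}3^{\delta n}n^{-2(1+\delta)}=\infty$ for every $\delta>0$, so $a\notin L^{1+\delta}(\cu_0)$.

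The technical core is the uniform bound: there is $C=C(d)<\infty$ with $(a)_Q\le C\,3^{m}$ for every triadic cube $Q\subseteq\cu_0$ of side $3^{-m}$, $m\ge0$. I would argue by cases. If $Q$ is contained in some $D_{n_0}$ with $n_0\le m$, then $f\equiv 3^{n_0}/n_0^2$ on $Q$, so $(a)_Q=1+3^{n_0}/n_0^2\le 2\cdot3^m$. Otherwise $Q$ survives to generation $m$, i.e.\ $Q\in\mathcal Q_m$; then $Q$ contains exactly $M^{\,j-1}(3^d-M)$ cubes of $D_{m+j}$ for each $j\ge1$, and a direct computation (the powers of $3$ cancel precisely because $M=3^{d-1}$, leaving exponent $-(d-1)m$) gives $\int_Q f=2\cdot3^{-(d-1)m}\sum_{i>m}i^{-2}\le 2\cdot3^{-(d-1)m}/m$, hence $(a)_Q=1+3^{dm}\int_Q f\le 3\cdot3^m$. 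This case analysis — showing that the self-similar construction really caps all triadic averages at the single rate $3^m$ — is the only step requiring care, and I expect it to be the main obstacle.

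Granting it, \eqref{e.a.norm.bounds} yields $\|\a(z+\cu_k;\bacon)\|\le(a)_{z+\cu_k}\le C\,3^{-k}$ for every admissible cone and every triadic $z+\cu_k\subseteq\cu_0$ (with $k=-m\le0$), so the definition \eqref{eq:besov-norms} gives
\begin{equation*}
  \Lambda_s(\cu_0;\bacon)\le\Bigl(\cs_s\sum_{k=-\infty}^{0}3^{sk}\bigl(C\,3^{-k}\bigr)^{\nicefrac12}\Bigr)^{\!2}=C\,\cs_s^2\Bigl(\sum_{m=0}^{\infty}3^{(\nicefrac12-s)m}\Bigr)^{\!2}<\infty,
\end{equation*}
the last series converging precisely because $s>\nicefrac12$. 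By the monotonicity \eqref{e.ellipticity.monotone} the same bound holds for every admissible cone $\bacon$, which completes the proof.
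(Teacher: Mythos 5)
Your proof is correct, and the construction is genuinely different from the paper's. The paper builds a purely one-dimensional function $f=\sum_k A_k\indc_{J_k}$ on $(0,1)$ with super-exponentially growing values $A_k=3^{k^2}$ on a nested, rapidly shrinking sequence of intervals $J_k=(0,\ell_k)$ (so the singularity concentrates at the origin), shows that the resulting $M_k\lesssim 3^k$, and then extends to $\cu_0$ by $a(x)=f(|x_1|)$. You instead work intrinsically in $\Rd$, discarding $2\cdot3^{d-1}$ of the $3^d$ triadic children at each generation so that the surviving measure decays exactly like $3^{-n}$, and you put the exponentially growing value $3^n/n^2$ on the discarded set $D_n$; the choice of retention ratio $M=3^{d-1}$ is exactly what makes the powers of $3$ cancel in the average over a surviving cube, giving $(a)_Q\lesssim 3^m$. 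The tradeoff is roughly the following: the paper's 1D reduction makes the average bound a single-variable case analysis, at the cost of a somewhat arbitrary anisotropic extension to $\cu_0$; your fractal construction is geometrically more symmetric and keeps the singular support spread out at every scale, but the $\sum_{i>m}i^{-2}\le 1/m$ cancellation is the step that really has to be checked, and it would be tidied up by noting it needs $m\ge1$ (for $m=0$ the bound is just a numerical constant, absorbed into $C$). Both arguments land on the same geometric-series bound for $\Lambda_s(\cu_0;\bacon)$, convergent precisely because $s>\nicefrac12$.
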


\begin{proof}
We first construct a one-dimensional example on $(0,1)$ and then extend it to $\cu_0$ by
dependence on a single coordinate. Fix $s\in(1/2,1)$ and choose a parameter $\alpha\in(0,1)$.
For each integer $k\ge1$, set
\begin{equation*}
A_k \coloneqq 3^{k^2}
\quad\text{and}\quad
\ell_k \coloneqq 3^{-k}\,\frac{3^{\alpha k}}{A_k}.
\end{equation*}
Since $A_k> 3^{\alpha k}$, we have $\ell_k< 3^{-k}$ for every $k$. Let $J_k\subset(0,3^{-k})$ be defined as $J_{k}:=(0,\ell_{k})$, and set
\begin{equation*}
f(x) \coloneqq \sum_{k=1}^\infty A_k \, \mathbf{1}_{J_k}(x)
\qquad \text{for } x\in(0,1).
\end{equation*}

\smallskip

\noindent \emph{Step 1: $f\in L^1(0,1)$ but $f\notin L^p(0,1)$ for every $p>1$.}
For the $L^1$ norm we compute
\begin{equation*}
\int_0^1 f(x)\,dx
= \sum_{k=1}^\infty A_k |J_k|
= \sum_{k=1}^\infty A_k \, 3^{-k}\frac{3^{\alpha k}}{A_k}
= \sum_{k=1}^\infty 3^{-(1-\alpha)k} < \infty
\end{equation*}
since $\alpha<1$. For $p>1$, we obtain
\begin{equation*}
\int_0^1 |f(x)|^p\,dx
\ge  \sum_{k=1}^\infty A_k^p |J_k|
= \sum_{k=1}^\infty A_k^{p-1} 3^{-k}3^{\alpha k}
= \sum_{k=1}^\infty 3^{(p-1)k^2} \, 3^{-(1-\alpha)k}.
\end{equation*}
The exponent $(p-1)k^2-(1-\alpha)k$ tends to $+\infty$ for every fixed $p>1$, so the terms do not
tend to zero and the series diverges. Thus, $f\notin L^p(0,1)$ for every $p>1$.

\smallskip

\noindent \emph{Step 2: coarse-grained ellipticity.} For $k\ge1$, let $M_k$ denote the supremum of local averages of $f$ on intervals of length $3^{-k}$:
\begin{equation}
M_k \coloneqq \sup\biggl\{ \fint_I |f| \,dx : I\subset(0,1),\, |I|=3^{-k}\biggr\}.
\label{mk}
\end{equation}
We claim that $M_k \le c(\alpha,s) 3^{\frac{k(1+2s)}{2}}$. This suffices, as it is enough to bound
\begin{flalign}\label{imply1}
\sum_{k=1}^{\infty}3^{-sk}M_{k}^{\nicefrac{1}{2}} \ \stackrel{\eqref{Lam}}{\Longrightarrow} \ \Lambda_{s}(\cu_{0};\bacon)<\infty.
\end{flalign}
To estimate $M_k$, let us first observe that the general competitor interval in definition~\eqref{mk} is $I_{a}:=(a,a+3^{-k})$ for some $a\in (0,1-3^{-k})$, and that, as $|I_{a}|=3^{-k}$, the maximal average is achieved on $I_{0}$. In fact, for fixed $j\in \mathbb{N}$, we have
$$
|I_{a}\cap J_{j}|=\begin{cases}
    \displaystyle
    \ 0\quad &\mbox{if} \ \ a\ge \ell_{j}\vspace{1.5mm}\\
    \displaystyle
    \ \min\bigl\{\ell_{j},a+3^{-k}\bigr\}-a\quad &\mbox{if} \ \ a<\ell_{j}
\end{cases}\le |I_{0}\cap J_{j}|=\min\bigl\{\ell_{j},3^{-k}\bigr\},
$$
thus 
$$
\fint_{I_{a}}|f|\,dx=3^{k}\sum_{j=1}^{\infty}A_{j}|I_{a}\cap J_{j}|\le 3^{k}\sum_{j=1}^{\infty}A_{j}|I_{0}\cap J_{j}|=\fint_{I_{0}}|f|\,dx.
$$
Next, introduce the sets of indices
$$
\begin{cases}
    \displaystyle

    \ B_{1;k}:=\left\{j\in \mathbb{N}\colon 1\le j\le k-1\mbox{ and }\ell_{j}\le 3^{-k}\right\}\vspace{1.5mm}\\
    \displaystyle
    \ B_{2;k}:=\left\{j\in \mathbb{N}\colon 1\le j\le k-1\mbox{ and }\ell_{j}> 3^{-k} \right\}\vspace{1.5mm}\\
    \displaystyle
    \ B_{3;k}:=\left\{j\in \mathbb{N}\colon j\ge k\right\},
\end{cases}
$$
and notice that 
\begin{flalign}\label{b2k}
\begin{cases}
\displaystyle
    \ B_{1;k}\subseteq\biggl\{1,\cdots,\Bigl\lfloor\frac{\alpha-1+\sqrt{(1-\alpha)^{2}+4k}}{2} \Bigr\rfloor+1\biggr\}=:\tilde{B}_{1;k}\vspace{1.5mm}\\
    \displaystyle
    \ B_{2;k}\subseteq\biggl\{ \Bigl\lfloor\frac{\alpha-1+\sqrt{(1-\alpha)^{2}+4k}}{2}\Bigr\rfloor-1,\cdots,k-1\biggr\}=:\tilde{B}_{2;k}.
\end{cases}
\end{flalign}
We then split
\begin{eqnarray*}
\fint_{I_{0}}|f|\,dx&=&3^{k}\sum_{j=1}^{\infty}A_{j}|I_{0}\cap J_{j}|\nonumber \\
&\le&3^{k}\sum_{B_{1;k}}A_{j}|I_{0}\cap J_{j}|+3^{k}\sum_{B_{2;k}}A_{j}|I_{0}\cap J_{j}|+3^{k}\sum_{B_{3;k}}A_{j}|I_{0}\cap J_{j}|\nonumber \\
&=&S_{1;k}+S_{2;k}+S_{3;k}
\end{eqnarray*}
and bound
$$
S_{1;k}\le 3^{k}\sum_{\tilde{B}_{1;k}}3^{-j(1-\alpha)}\le 3^{k}\Bigl(\frac{3}{1-3^{-(1-\alpha)}} \Bigr).
$$
Similarly, we control
$$
S_{2;k}\le 3^{k}\sum_{\tilde{B}_{2;k}}3^{-j(1-\alpha)}\le \Bigl(\frac{3^{1-\alpha}}{1-3^{-(1-\alpha)^{2}}}\Bigr)3^{k-(1-\alpha)\sqrt{k}}
$$
and
$$
S_{3;k}\le 3^{k}\sum_{j\ge k}3^{-j(1-\alpha)}\le \frac{3^{\alpha k}}{1-3^{-(1-\alpha)}},
$$
so overall $M_{k}\le c(\alpha)3^{k}$. Plugging this information into~\eqref{imply1} and using~$s > \nicefrac12$, we obtain
$$
\sum_{k=1}^{\infty}3^{-sk}M_{k}^{\nicefrac{1}{2}}\le \sum_{k=1}^{\infty}3^{-k(s-\nicefrac{1}{2})}< \infty.
$$

\noindent \emph{Step 3: extension to $\cu_0$.}
Define a scalar coefficient field on $\cu_0=(-\tfrac12,\tfrac12)^d$ by
\begin{equation*}
a(x_1,\dots,x_d) \coloneqq f(|x_1|).
\end{equation*}
By Fubini, $a\in L^1(\cu_0)$ and $a\notin L^p(\cu_0)$ for every $p>1$ if and only if the same
holds for $f$ on $(0,1)$, so the conclusions of Step~1 transfer directly. Likewise, the
characterization of $B^{-2s}_{\infty,1}(\cu_0)$ in terms of scale-discounted averages on cubes
reduces to the one-dimensional estimates for $f$, and hence $a\in B^{-2s}_{\infty,1}(\cu_0)$ by
Step~2. This completes the proof.
\end{proof}

\subsection{Fractal example}
\label{ss.singular.measure}

We next discuss a second class of examples, based on mollified singular measures supported on
fractal sets. In particular, we show that the Besov condition~\eqref{e.Sobolev.ellipticity.condition} is stable under mollification of measures supported on sets
of Hausdorff dimension strictly larger than $d-2$, while the classical $L^p$ bounds deteriorate
along the approximating sequence.

\begin{proposition}[Fractal examples]
\label{p.example.fractal}
Let $d\ge2$ and $\alpha\in(d-2,d)$. Suppose that $F\Subset\cu_0$ is a compact
$\alpha$-Ahlfors regular set, in the sense that there exists a Borel probability measure $\mu_F$
supported on $F$ and a constant $C_0\ge1$ such that
\begin{equation}
C_0^{-1} r^{\alpha}
\;\le\;
\mu_F\bigl(B(x,r)\bigr)
\;\le\;
C_0 r^{\alpha}
\qquad\text{for all }x\in F,\ r\in(0,1).
\label{e.Frostman}
\end{equation}
Let $\eta\in C_c^\infty(\Rd)$ be a standard nonnegative mollifier with $\int_{\Rd}\eta=1$ and
$\supp\eta\subseteq B_1(0)$, and set $\eta_\delta(x)\coloneqq \delta^{-d}\eta(x/\delta)$ for
$\delta\in(0,1)$. Define the mollified densities
\begin{equation*}
\mu_F^\delta(x) \coloneqq (\mu_F * \eta_\delta)(x)
= \int_{\cu_0} \eta_\delta(x-y)\,d\mu_F(y),
\qquad x\in\cu_0,
\end{equation*}
and let $a_\delta \coloneqq 1 + \mu_F^\delta$. Then the following hold.
\begin{enumerate}
\item[\textup{(i)}] For every $\delta\in(0,\delta_{0})$, with $\delta_{0}:=\min\{1,\dist(F,\partial \cu_{0})\}/(4d)$,
\[
a_\delta \in L^1(\cu_0)
\qquad\text{and}\qquad
\int_{\cu_0} a_\delta(x)\,dx = 1 + \mu_F(\cu_0) = 2.
\]
\item[\textup{(ii)}] For every $p>1$,
\begin{equation*}
\lim_{\delta \to 0} 
\|\mu_F^\delta\|_{L^p(\cu_0)} = \infty
\,.
\end{equation*}

\item[\textup{(iii)}] Let $s\in(0,1)$ satisfy
\begin{equation}
2s > d-\alpha.
\label{e.fractal.s.condition}
\end{equation}
Then there exists a constant $C=C(d,\alpha,C_0,s,\eta)<\infty$, such that
\begin{equation*}
\sup_{\delta\in (0,1)}\Lambda_{s}(\cu_{0},\bacon) \le C.
\end{equation*}
In particular, the family $(a_\delta)_{\delta\in(0,1)}$ is coarse-grained elliptic for all $s\in  \left(\nicefrac{(d-\alpha)}{2},1\right)$ with constants independent of $\delta$, while the $L^p$-norms of $a_\delta$ diverge for every $p>1$.
\end{enumerate}
\end{proposition}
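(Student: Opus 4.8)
The plan is to prove the three items separately, the bulk of the work being (iii). For (i) I would argue by Fubini: when $\delta<\delta_0$ the support of $\mu_F^\delta=\mu_F*\eta_\delta$ lies in the $\delta$-neighbourhood of $F$, which is still contained in $\cu_0$, and $\int_{\cu_0}\mu_F^\delta=\int_{\cu_0}\!\int\eta_\delta(x-y)\,d\mu_F(y)\,dx=\int\bigl(\int\eta_\delta(x-y)\,dx\bigr)d\mu_F(y)=\mu_F(\cu_0)=1$ since $\mu_F$ is a probability measure; as $\mu_F^\delta\ge0$ and $|\cu_0|=1$, this gives $a_\delta\in L^1(\cu_0)$ with $\int_{\cu_0}a_\delta=1+1=2$.

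For (ii), the point is that $\mu_F^\delta$ is uniformly large on the $\delta/4$-neighbourhood of $F$. If $\dist(x,F)<\delta/4$ and $y_0\in F$ realizes this distance, then $B_{\delta/4}(y_0)\subseteq B_{\delta/2}(x)$, and since a standard mollifier satisfies $\eta\ge c_0>0$ on $B_{1/2}$ we get $\eta_\delta(x-y)\ge c_0\delta^{-d}$ for $y\in B_{\delta/4}(y_0)$, hence $\mu_F^\delta(x)\ge c_0\delta^{-d}\mu_F(B_{\delta/4}(y_0))\ge c\,\delta^{\alpha-d}$ by the lower bound in~\eqref{e.Frostman}. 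On the other hand a maximal $\delta/2$-separated subset of $F$ has cardinality $\gtrsim\delta^{-\alpha}$ (its $\delta/2$-balls cover $F$, so $1\le C_0(\delta/2)^\alpha\cdot\#\{\cdots\}$), and the corresponding disjoint $\delta/4$-balls all lie in $\{\dist(\cdot,F)<\delta/4\}$, so that set has Lebesgue measure $\gtrsim\delta^{d-\alpha}$. Combining, $\|\mu_F^\delta\|_{L^p(\cu_0)}^p\gtrsim\delta^{(\alpha-d)p}\,\delta^{d-\alpha}=\delta^{-(d-\alpha)(p-1)}\to\infty$ as $\delta\to0$, since $p>1$ and $\alpha<d$.

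For (iii), since $a_\delta$ is scalar with $a_\delta\ge1$, by~\eqref{Lam} and~\eqref{e.a.norm.bounds} it is enough to bound, uniformly in $\delta\in(0,1)$ and over all admissible cones, the quantity $\cs_s\sum_{k\le0}3^{sk}\max_z(1+\fint_{z+\cu_k}\mu_F^\delta)^{1/2}\le 1+\cs_s\sum_{k\le0}3^{sk}\max_z(\fint_{z+\cu_k}\mu_F^\delta)^{1/2}$. The key elementary estimate will be the two-regime bound
\[
  \fint_{z+\cu_k}\mu_F^\delta\;\le\;C\cdot
  \begin{cases}
    3^{k(\alpha-d)}, & \text{if } 3^k\ge\delta,\\
    \delta^{\alpha-d}, & \text{if } 3^k<\delta,
  \end{cases}
\]
valid for every $k\le0$ and every cube $z+\cu_k$. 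The first line follows from $\int_{z+\cu_k}\mu_F^\delta\le\mu_F\bigl((z+\cu_k)^{\delta}\bigr)\le C\,3^{k\alpha}$ by Fubini and the upper Ahlfors bound (which I extend from centers in $F$ to arbitrary centers by noting that any ball meeting $F$ sits inside a concentric ball of twice the radius centered on $F$), divided by $|z+\cu_k|=3^{kd}$; the second line follows from $\|\mu_F^\delta\|_{L^\infty}\le\|\eta_\delta\|_{L^\infty}\sup_x\mu_F(B_\delta(x))\le C\delta^{\alpha-d}$. Splitting the sum at $3^k\approx\delta$: on the small-scale part the geometric sum $\sum_{3^k<\delta}3^{sk}\approx\cs_s^{-1}\delta^{s}$ multiplied by $\delta^{(\alpha-d)/2}$ produces a term $\lesssim\cs_s^{-1}\delta^{\,s-(d-\alpha)/2}\le\cs_s^{-1}$ — this is exactly where~\eqref{e.fractal.s.condition}, i.e. $2s>d-\alpha$, makes the exponent positive — while on the large-scale part one gets $\sum_{k\le0}3^{k(s-(d-\alpha)/2)}=\cs_{s-(d-\alpha)/2}^{-1}<\infty$ by the same sign condition. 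Squaring gives $\sup_{\delta\in(0,1)}\Lambda_s(\cu_0;\bacon)\le C(d,\alpha,C_0,s,\eta)$. Finally, $a_\delta\ge1$ forces $|\a_*^{-1}(z+\cu_k)|\le1$ on every cube, hence $\lambda_t(\cu_0)\ge1$ for all $t\in(0,1)$, so $\Theta_{s,t}=\Lambda_s/\lambda_t\le\Lambda_s<\infty$ for any $t$ with $s+t<1$, which is the asserted coarse-grained ellipticity with $\delta$-independent constants.

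The main obstacle is entirely contained in (iii) and is bookkeeping rather than analysis: one must make the two-regime bound on the averages $\fint_{z+\cu_k}\mu_F^\delta$ precise enough that both halves of the split geometric series converge exactly under $2s>d-\alpha$ (the $L^\infty$ bound is essential on small scales, the Ahlfors upper bound on large scales), and one must take care that upper Ahlfors regularity is invoked for balls not centered on $F$.
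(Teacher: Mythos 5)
Your proposal is correct and follows essentially the same route as the paper: part (i) by Fubini, part (ii) by showing $\mu_F^\delta\gtrsim\delta^{\alpha-d}$ on a set of measure $\gtrsim\delta^{d-\alpha}$ (the paper counts disjoint balls around a $\delta$-net in $F$, you use the $\delta/4$-neighbourhood directly — same estimate), and part (iii) by exactly the same two-regime bound on $\fint_Q\mu_F^\delta$ (Ahlfors upper bound via Fubini for $r\ge\delta$, the $L^\infty$ bound for $r<\delta$) followed by splitting the multiscale sum at $3^k\approx\delta$, with $2s>d-\alpha$ making both halves converge uniformly in $\delta$. Your final remark that $a_\delta\ge 1$ gives $\lambda_t\ge 1$ and hence $\Theta_{s,t}\le\Lambda_s$ makes explicit something the paper leaves implicit, and is a clean way to close the loop.
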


\begin{proof}
\emph{Step~1: $L^1$-normalization.}
Since $\eta_\delta$ has unit mass and $\mu_F$ is a probability measure, we have for every
$\delta\in(0,1)$
\[
\int_{\cu_0} \mu_F^\delta(x)\,dx
= \int_{\cu_0}\int_{\cu_{0}} \eta_\delta(x-y)\,dx\,d\mu_F(y)
= \int_{\cu_0} d\mu_F(y) = 1.
\]
Thus, $\|\mu_F^\delta\|_{L^1(\cu_0)}=1$ and $\|a_\delta\|_{L^1(\cu_0)}=2$ for all $\delta\in (0,\delta_{0})$.

\smallskip

\noindent \emph{Step~2: pointwise upper bound and local averages.}
From the upper Ahlfors regularity in~\eqref{e.Frostman} and the support of $\eta_\delta$, we obtain
a scale-dependent pointwise bound. Indeed, for any $x\in\cu_0$,
\begin{align*}
\mu_F^\delta(x)
&= \delta^{-d}\int_{\cu_0} \eta\!\left(\frac{x-y}{\delta}\right)\,d\mu_F(y)
\le C\,\delta^{-d}\mu_F\bigl(B(x,\delta)\bigr)
\le C_1\,\delta^{\alpha-d},
\end{align*}
for some constant $C_1=C_1(d,\alpha,C_0,\eta)$. Let $Q$ be a triadic cube contained in $\cu_0$ with side length $r\in(0,1)$ and center $x_Q$. Using Fubini and the bound on $\eta_\delta$, we estimate
\begin{align*}
\fint_Q \mu_F^\delta
= \frac{1}{|Q|}\int_Q \int_{\cu_0} \eta_\delta(x-y)\,d\mu_F(y)\,dx 
\leq \frac{C}{|Q|}\int_{\cu_0} \mathbf{1}_{\{ \dist(y,Q)\le\delta\}}\,d\mu_F(y)
= \frac{C(d)\,\mu_F(Q^{+\delta})}{|Q|}
\,,
\end{align*}
where $Q^{+\delta} \coloneqq \{y\in\cu_0 : \dist(y,Q)\le\delta\}$.
If $r\ge\delta$, then $Q^{+\delta}$ is contained in a ball of radius $2r$ centered at $x_Q$, and
therefore, by~\eqref{e.Frostman},
\[
\mu_F(Q^{+\delta})
\le \mu_F\bigl(B(x_Q,2r)\bigr)
\le C_0 (2r)^{\alpha}
\]
and hence
\begin{equation}
\fint_Q \mu_F^\delta
\le C\,\frac{r^{\alpha}}{r^d}
= C(d,C_{0},\alpha) r^{\alpha-d}
\qquad\text{whenever }r\ge\delta.
\label{e.fractal.avg.large}
\end{equation}
If $r<\delta$, we simply combine the pointwise bound with the trivial estimate
\[
\fint_Q \mu_F^\delta \le \sup_{x\in Q}\mu_F^\delta(x)\le C_1 \delta^{\alpha-d}.
\]
Summarizing, there exists $C_2=C_{2}(d,C_{0},\alpha,\eta)$ such that for every triadic cube $Q\subseteq\cu_0$ of side length
$r$,
\begin{equation}
\fint_Q \mu_F^\delta
\;\le\;
\begin{cases}
C_2\,r^{\alpha-d}, & r\ge\delta, \\
C_2\,\delta^{\alpha-d}, & r<\delta.
\end{cases}
\label{e.fractal.avg.two.regimes}
\end{equation}

\smallskip

\noindent \emph{Step~3: blow-up of $L^p$ norms.}
Let $p>1$ be fixed. Choose $\delta\in(0,\delta_{0})$ and let $\{x_i\}_{i=1}^N\subseteq F$ be a maximal $\delta$-separated set, that
is, $|x_i-x_j|\ge\delta$ for $i\neq j$ and $F\subseteq\bigcup_{i=1}^N B(x_i,\delta)$.
The balls $B(x_i,\nicefrac\delta2)$ are pairwise disjoint, and by the Ahlfors regularity
in~\eqref{e.Frostman},
\[
\begin{cases}
\displaystyle
\ 1 = \mu_F(F)
\ge \sum_{i=1}^N \mu_F\bigl(B(x_i,\tfrac{\delta}{2})\bigr)
\ge C_0^{-1} \Bigl(\frac{\delta}{2}\Bigr)^{\alpha} N\vspace{1.5mm}\\
\displaystyle
\ 1=\mu_{F}(F)\le \sum_{i=1}^{N}\mu_{F}(B(x_{i},\delta))\le C_{0}N\delta^{\alpha},
\end{cases}
\]
so that $N \approx_{\alpha,C_{0}}\delta^{-\alpha}$. Choose the mollifier $\eta$ so that $\eta\ge c_\eta>0$ on $B_{\nicefrac12}(0)$. If $x\in B(x_i,\nicefrac\delta8)$,
then $B(x_{i},\nicefrac{\delta}{8})\subseteq B(x,\nicefrac\delta4)\subseteq B(x_i,\nicefrac\delta2)$ and therefore
\begin{eqnarray*}
\mu_F^\delta(x)
&\ge& \delta^{-d}\int_{B(x,\nicefrac\delta4)} \eta\Bigl(\frac{x-y}{\delta}\Bigr)\,d\mu_F(y)\nonumber \\
&\ge& \delta^{-d}\int_{B(x_{i},\nicefrac\delta8)} \eta\Bigl(\frac{x-y}{\delta}\Bigr)\,d\mu_F(y)\ge c_\eta \delta^{-d}\,\mu_F\bigl(B(x_{i},\tfrac{\delta}{8})\bigr)
\ge C\delta^{\alpha-d},
\end{eqnarray*}
for some $C=C(\alpha,C_0,\eta)>0$. The balls $B(x_i,\nicefrac\delta8)$ are disjoint and all contained in
$\cu_0$ since $\delta<\delta_0$, so we obtain
\begin{equation*}
\int_{\cu_0} |\mu_F^\delta(x)|^p\,dx
\geq \sum_{i=1}^N \int_{B(x_i,\nicefrac\delta8)} |\mu_F^\delta(x)|^p\,dx 
\geq N c^p \delta^{p(\alpha-d)}\,|B(0,\tfrac{\delta}{8})|
\ge c(d,\alpha,C_{0},\eta,p) N\,\delta^{p(\alpha-d)+d}
\,.
\end{equation*}
\noindent Using $N\gtrsim_{C_{0},\alpha}\delta^{-\alpha}$ and simplifying yields
\[
\int_{\cu_0} |\mu_F^\delta|^p
\;\ge\; c(d,\alpha,C_{0},\eta,p)\delta^{(d-\alpha)(1-p)}.
\]
Since $d-\alpha>0$ and $p>1$, the exponent $(d-\alpha)(1-p)$ is negative, so the right-hand side
tends to $+\infty$ as $\delta\downarrow0$. Hence, $\|\mu_F^\delta\|_{L^p(\cu_0)}\to\infty$ for every
$p>1$.

\smallskip

\noindent \emph{Step~4: coarse-grained ellipticity.} Fix any $s \in \left(\nicefrac{(d-\alpha)}{2},1\right)$. For each $k\ge0$, let $\mathcal C_k$ denote the family of triadic cubes $Q\subseteq\cu_0$ of side
length $r_k\coloneqq3^{-k}$, and define
\[
M_k(\delta) \coloneqq
\max_{Q\in\mathcal C_k} \fint_Q |\mu_F^\delta|.
\]
By~\eqref{e.fractal.avg.two.regimes}, there exists $C_3=C_{3}(d,C_{0},\alpha,\eta)$ such that
\begin{equation*}
M_k(\delta)
\le
\begin{cases}
C_3\,r_k^{\alpha-d} = C_3\,3^{(d-\alpha)k}, & r_k\ge\delta, \\
C_3\,\delta^{\alpha-d}, & r_k<\delta.
\end{cases}
\end{equation*}
Let $k_0=k_0(\delta)$ be the integer such that $3^{-(k_0+1)}<\delta\le 3^{-k_0}$.
Then $r_k\ge\delta$ for $k\le k_0$ and $r_k<\delta$ for $k>k_0$, and we have by~\eqref{Lam}
\begin{equation*}
\Lambda_{s}(\cu_{0},\bacon)
\leq 
\sum_{k=0}^{k_0} 3^{-s k} M_k(\delta)^{\nicefrac{1}{2}}
+ \sum_{k>k_0} 3^{-s k} M_k(\delta)^{\nicefrac{1}{2}}
\eqqcolon S_1(\delta)+S_2(\delta)
\,.
\end{equation*}
For the first sum, using $M_k(\delta)\le C_3 3^{(d-\alpha)k}$ for $k\le k_0$ gives
\begin{equation*}
S_1(\delta)
\leq 
C_3^{\nicefrac{1}{2}}\sum_{k=0}^{k_0} 3^{-k\left(s-\frac{d-\alpha}{2}\right)}.
\,.
\end{equation*}
By the condition~\eqref{e.fractal.s.condition}, the exponent $d-\alpha-2s$ is negative, and
therefore $\sum_{k\ge0}3^{-k\left(s-\nicefrac{(d-\alpha)}{2}\right)}<\infty$. In particular, $S_1(\delta)$ is bounded uniformly in $\delta$. For the second sum, using $M_k(\delta)\le C_3\delta^{\alpha-d}$ for $k>k_0$ yields
\[
S_2(\delta)
\le C_3^{\nicefrac{1}{2}} \delta^{\frac{\alpha-d}{2}}\sum_{k>k_0}3^{-s k}
\le C_{3}^{\nicefrac{1}{2}} \delta^{\frac{\alpha-d}{2}} 3^{-s k_0}
\sum_{j\ge1}3^{-s j}
\le C(C_{3},s) \delta^{\frac{\alpha-d}{2}}\,3^{-s k_0}.
\]
Since $3^{-k_0}\simeq\delta$, we have $3^{-s k_0}\simeq \delta^{s}$, and thus
\[
S_2(\delta) \lesssim \delta^{\frac{\alpha-d}{2}+s}.
\]
Again by~\eqref{e.fractal.s.condition}, the exponent $\nicefrac{(\alpha-d)}{2}+s$ is positive, so
$\delta^{\frac{\alpha-d+2s}{2}}\le1$ for $\delta\in(0,1)$. Hence, $S_2(\delta)$ is also bounded uniformly in
$\delta$, and we conclude that
\begin{equation*}
\sup_{0<\delta<1}
\Lambda_{s}(\cu_{0},\bacon) <\infty
\,.
\end{equation*}
The contribution of the constant $1$ in $a_\delta=1+\mu_F^\delta$ to $\Lambda_{s}$ is
\[
\sum_{k\ge0} 3^{-s k}
\max_{Q\in\mathcal C_k} \left(\fint_Q 1
\right)^{\nicefrac{1}{2}}= \sum_{k\ge0} 3^{-s k} <\infty,
\]
so $a_\delta$ is coarse-grained elliptic uniformly in $\delta$ as well. This
completes the proof.
\end{proof}

\subsection{Multifractal random measures}
\label{ss.GMC}

We finally record a third family of examples, based on Gaussian multiplicative chaos. These show that the Besov condition~\eqref{e.Sobolev.ellipticity.condition} applies to regularizations
of highly intermittent log-normal weights which are uniformly in $L^1$ but not in $L^p$ for any
$p>1$. In the Proposition below, we work with mollified versions of the measure and prove that $\a_\delta = 1+\mu_\gamma^\delta \in \Besov{\infty}{1}{-2s}$ which implies that $\Lambda_s(\cu_{-1})$ is uniformly bounded in $\delta$, furthermore the addition of $1$ ensures that $\Theta_{s,t}(\cu_{-1})$ is also uniformly bounded in $\delta$ for all $t\ge0$ and hence $\a_\delta$ is coarse-grained elliptic in $\cu_{-1}$.

\begin{proposition}[Gaussian multiplicative chaos]\label{p.GMC.example}
  Let $d\ge2$ and let $X$ be a centered log-correlated Gaussian field on $\cu_0$ with covariance
  \begin{equation*}
    \E\big[X(x)X(y)\big]
    =
    \log\frac{1}{|x-y|}
    +
    g(x,y),
  \end{equation*}
  where $g$ is bounded and smooth.
  For $\gamma\in(0,\sqrt{2d})$, let $\mu_\gamma$ denote the associated (subcritical) Gaussian multiplicative chaos on $\cu_0$, that is,
  \begin{equation*}
    \mu_\gamma(dx) = e^{\diamond\,\gamma X(x)}\,dx,
  \end{equation*}
  where $e^{\diamond\,\gamma X(x)}$ denotes the Wick exponential of $\gamma X(x)$.
  Let $\eta\in C_c^\infty(\Rd)$ be a standard nonnegative mollifier with $\int_{\Rd}\eta=1$, set $\eta_\delta(x)\coloneqq \delta^{-d}\eta(x/\delta)$, and define
  \begin{equation*}
    \mu_\gamma^\delta \coloneqq \mu_\gamma * \eta_\delta,
    \qquad
    \a_\delta \coloneqq 1 + \mu_\gamma^\delta,
    \qquad
    \delta\in(0,1).
  \end{equation*}
  There exists a constant $c_d \in (0,\sqrt{2d})$ depending only on $d$ such that,
  almost surely, for every $\gamma\in(0,c_d)$, the following hold:
  \begin{enumerate}
    \item\label{i.uniform.L1}
          \begin{equation*}
            \sup_{\delta\in(0,1)} \|\a_\delta\|_{L^1(\cu_0)}
            \le 1+\mu_\gamma(\cu_0)
            <\infty.
          \end{equation*}

    \item\label{i.blowup.Lp}
          For every $p>1$,
          \begin{equation*}
            \lim_{\delta\downarrow0}\|\a_\delta\|_{L^p(\cu_{-1})}=\infty.
          \end{equation*}

    \item\label{i.negative.Besov}
          For every $s\in(\gamma/c_d,1)$,
          \begin{equation*}
            \sup_{\delta\in(0,1)} \|\a_\delta\|_{B^{-2s}_{\infty,1}(\cu_{-1})}<\infty.
          \end{equation*}
  \end{enumerate}
\end{proposition}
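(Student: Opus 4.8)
The plan is to prove the three assertions of Proposition~\ref{p.GMC.example} by combining standard facts about subcritical Gaussian multiplicative chaos (finiteness of moments, negative moments, and multifractal scaling of $\mu_\gamma$ on dyadic/triadic cubes) with the scale-discounted average characterization of $\Besov{\infty}{1}{-2s}$ and the bound~\eqref{Lam}. For~\eqref{i.uniform.L1}, since $\eta_\delta$ has unit mass, Fubini gives $\|\mu_\gamma^\delta\|_{L^1(\cu_0)} = \mu_\gamma(\cu_0)$ for every $\delta$, and it is classical that $\mu_\gamma(\cu_0)<\infty$ almost surely for all $\gamma\in(0,\sqrt{2d})$; hence $\|\a_\delta\|_{L^1(\cu_0)}\le 1+\mu_\gamma(\cu_0)$ uniformly in $\delta$.

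For~\eqref{i.blowup.Lp}, I would adapt the argument of Step~3 in Proposition~\ref{p.example.fractal}: the key point is that a mollification at scale $\delta$ of a measure with polynomial (in fact multifractal) scaling produces a density of typical size $\delta^{-d}\mu_\gamma(B(x,\delta))$, which on a $\delta$-ball where the chaos is not atypically small is of order $\delta^{\gamma^2/2 - d}$ times a positive random factor. More carefully, one picks a maximal $\delta$-separated net $\{x_i\}$ in $\cu_{-1}$, so $N\simeq\delta^{-d}$, and for each $i$ bounds $\mu_\gamma^\delta(x)\ge c_\eta\delta^{-d}\mu_\gamma(B(x_i,\delta/8))$ for $x\in B(x_i,\delta/8)$. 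Summing $\int|\mu_\gamma^\delta|^p\ge c\sum_i \delta^d(\delta^{-d}\mu_\gamma(B(x_i,\delta/8)))^p$ and using that, by the multifractal spectrum, $\sum_i \mu_\gamma(B(x_i,\delta/8))^p \gtrsim \delta^{-d}\,\delta^{\xi(p)}$ for a suitable exponent $\xi(p)$ with $\xi(p)<d+p(\gamma^2/2-d)\cdot(\text{correction})$—concretely one uses that for $p>1$ the moment $\E[\mu_\gamma(B(0,\delta))^p]\simeq \delta^{\zeta(p)}$ with $\zeta(p)=(d+\gamma^2/2)p-\gamma^2 p^2/2$ in the regime $p<2d/\gamma^2$, together with a Borel–Cantelli / concentration argument along $\delta=2^{-n}$—to conclude $\int_{\cu_{-1}}|\mu_\gamma^\delta|^p\to\infty$ almost surely. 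The constant $c_d$ enters here to guarantee we stay in the subcritical regime where these moments are finite and the almost-sure lower bounds along a geometric sequence of scales hold.

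For~\eqref{i.negative.Besov}, which is the crux, I would show $\|\a_\delta\|_{\Besov{\infty}{1}{-2s}(\cu_{-1})}\lesssim \sum_{k\ge0} 3^{-2sk}\max_{Q\in\mathcal C_k}\fint_Q \a_\delta$ (by~\eqref{e.dual.norms.relation} applied with $p=\infty$, up to the mean term which is handled by~\eqref{i.uniform.L1}), and then bound the scale-$k$ maximal average $M_k(\delta)\coloneqq\max_{Q}\fint_Q\mu_\gamma^\delta$ in two regimes exactly as in Proposition~\ref{p.example.fractal}: when $r_k=3^{-k}\ge\delta$, $\fint_Q\mu_\gamma^\delta\le C|Q|^{-1}\mu_\gamma(Q^{+\delta})\le C 3^{dk}\mu_\gamma(Q^{++})$ where $Q^{++}$ is a comparable cube; when $r_k<\delta$ the pointwise bound gives $\fint_Q\mu_\gamma^\delta\le C\delta^{-d}\sup_Q\mu_\gamma(B(\cdot,\delta))$. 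The essential probabilistic input is an almost-sure uniform modulus-of-continuity estimate: there is $c_d>0$ and, almost surely, a random constant $C_\omega<\infty$ such that $\mu_\gamma(Q)\le C_\omega\,(\diam Q)^{\,d-\gamma^2/2-\epsilon}$ uniformly over all triadic $Q\subseteq\cu_0$, for any fixed $\epsilon>0$, provided $\gamma$ is small; equivalently $\mu_\gamma(B(x,r))\le C_\omega r^{\beta}$ with $\beta = d-\gamma^2/2-\epsilon$, and $\beta>d-2s$ precisely when $s>\gamma^2/4 + \epsilon/2$, which after renaming the threshold gives the condition $s\in(\gamma/c_d,1)$ (choosing $c_d$ so that $\gamma/c_d$ dominates $\gamma^2/4$ for $\gamma$ in the admissible range). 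With this Hölder-type bound in hand, $M_k(\delta)\le C_\omega 3^{(d-\beta)k}$ for $3^{-k}\ge\delta$ and $M_k(\delta)\le C_\omega\delta^{\beta-d}$ for $3^{-k}<\delta$; splitting $\sum_k 3^{-2sk}M_k^{1/2}$ at $k_0$ with $3^{-k_0}\simeq\delta$, the first sum is a convergent geometric series $\sum 3^{-k(2s-(d-\beta)/2\cdot\ldots)}$ (convergent since $2s>d-\beta$, i.e.\ $s>(d-\beta)/2$), and the second sum is $\lesssim \delta^{(\beta-d)/2}3^{-2sk_0}\simeq\delta^{(\beta-d)/2+2s}$, which is bounded for $\delta\in(0,1)$ by the same condition $2s+ (\beta-d)/2 > 0$; adding the contribution of the constant $1$ (a convergent series $\sum 3^{-2sk}$) finishes the bound, uniformly in $\delta$. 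The main obstacle is establishing the almost-sure \emph{uniform-over-all-scales-and-cubes} Hölder regularity $\mu_\gamma(B(x,r))\le C_\omega r^\beta$ with a single random constant; this is a standard but nontrivial multifractal estimate for subcritical GMC (it follows from a chaining/Kolmogorov-type argument using the $L^q$ scaling $\E[\mu_\gamma(B(0,r))^q]\simeq r^{\zeta(q)}$ for $q<2d/\gamma^2$ and Borel–Cantelli over triadic scales), and it is the reason the statement is restricted to $\gamma\in(0,c_d)$ with $c_d$ strictly below $\sqrt{2d}$. Finally, the uniform bound on $\Lambda_s(\cu_{-1})$ for all $\delta$, together with the trivial lower bound $\a_\delta\ge 1$ which forces $\lambda_t(\cu_{-1})\gtrsim 1$ for every $t\in[0,1)$ (since $\a_*^{-1}(Q)\le((\a_\delta^{-1})_Q)^{-1}\cdot\text{(nothing)}$—more simply $\a_\delta^{-1}\le \Id$ so $|\a_*^{-1}(Q)|\le 1$), yields $\sup_\delta\Theta_{s,t}(\cu_{-1};\baconmax)<\infty$ and hence coarse-grained ellipticity in $\cu_{-1}$ uniformly in $\delta$.
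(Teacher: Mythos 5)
Your proposal handles~\eqref{i.uniform.L1} essentially as the paper does (note the paper uses~$\le$, not equality, since the mollified mass may leak across~$\partial\cu_0$; but the inequality is all that is needed). For~\eqref{i.blowup.Lp} and~\eqref{i.negative.Besov} you take a genuinely different route from the paper, and in~\eqref{i.negative.Besov} this route contains a real error.

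For~\eqref{i.blowup.Lp}, the paper does not perform any constructive lower bound on mollified densities. It instead invokes the a.s.\ mutual singularity of $\mu_\gamma$ with Lebesgue measure and the abstract fact that if $(\nu*\eta_\delta)_\delta$ is uniformly bounded in $L^p$ for some $p>1$, then $\nu\ll dx$ with density in $L^p$; this forces $\|\mu_\gamma^\delta\|_{L^p}\to\infty$. Your sketched lower bound via a $\delta$-net, $\E[\mu_\gamma(B(0,\delta))^p]\simeq\delta^{\zeta(p)}$, and a Borel--Cantelli concentration argument \emph{could} in principle be made to work, but you only have a first-moment heuristic: replacing $\sum_i\mu_\gamma(B(x_i,\delta/8))^p$ by its expectation requires a nontrivial concentration estimate (the summands are strongly correlated and heavy-tailed), and you do not provide one. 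The paper's soft argument is strictly simpler and avoids this entirely.

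For~\eqref{i.negative.Besov}, the paper's proof is again quite different: it cites the Besov regularity result of Junnila--Saksman--Viitasaari giving $\mu_\gamma\in B^\sigma_{p,q,\mathrm{loc}}$ for $\sigma<-\tfrac{(p-1)\gamma^2}{2}$, applies the embedding $B^\sigma_{p,q}\hookrightarrow B^{\sigma-d/p}_{\infty,q}$, and uses that convolution with a fixed smooth compactly supported kernel is uniformly bounded on the negative Besov space; no Ahlfors-type bound is ever claimed. Your approach instead asserts an a.s.\ uniform modulus-of-continuity estimate $\mu_\gamma(B(x,r))\le C_\omega r^{\,d-\gamma^2/2-\epsilon}$ over all scales and centers. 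This is \emph{false}: the exponent $d-\gamma^2/2$ is the one associated with the \emph{$L^2$ moment} scaling, not with the thick points. The correct a.s.\ uniform upper exponent is the left endpoint of the multifractal spectrum,
\begin{equation*}
\alpha_- \;=\; d + \tfrac{\gamma^2}{2} - \gamma\sqrt{2d} \;=\; \bigl(\sqrt d - \gamma/\sqrt 2\,\bigr)^2,
\end{equation*}
which is strictly smaller than $d-\gamma^2/2$ throughout the subcritical range $\gamma\in(0,\sqrt{2d})$, since $d-\gamma^2/2-\alpha_-=\gamma\sqrt{2d}-\gamma^2>0$. One sees $\alpha_-$ directly from optimizing the Chebyshev--union-bound inequality $\P[\mu_\gamma(Q)>3^{-n\beta}]\le C\,3^{n(\beta p-\zeta(p))}$, $\zeta(p)=(d+\tfrac{\gamma^2}{2})p-\tfrac{\gamma^2}{2}p^2$, over $p$ with $\sim3^{nd}$ cubes per scale: the optimum is $p=\sqrt{2d}/\gamma$ and yields exactly $\beta<\alpha_-$. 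The discrepancy is not cosmetic: your exponent would give the Besov threshold $2s>d-\beta\approx\gamma^2/2$ (quadratic in $\gamma$), whereas the correct threshold is $2s>d-\alpha_-=\gamma\sqrt{2d}-\gamma^2/2$ (linear in $\gamma$), which is precisely the shape $s>\gamma/c_d$ in the proposition. With the exponent corrected to $\alpha_-$, the remainder of your two-regime estimate of $M_k(\delta)$ and the geometric-series summation does work and recovers the stated result; so the gap is localized to that single claim, but as written it asserts more regularity of $\mu_\gamma$ than GMC actually possesses.

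One final, minor point: your closing remark about $\lambda_t$ and $\Theta_{s,t}$ is correct and mirrors the paper's preamble to the proposition (the lower bound $\a_\delta\ge1$ controls $\a_*^{-1}$ on every cube uniformly), but it is not part of the proposition's three assertions, so it belongs outside the proof proper.
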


\begin{proof}
  Existence of $\mu_\gamma$ for log-correlated fields as above, and the subcritical range $\gamma^2<2d$, are classical; see, e.g.,~\cite{Kahane1985,RhodesVargas2014,Shamov2016}. Fix a realization of the field and the chaos and suppress this dependence in the notation.

  \smallskip

  \emph{Step 1: uniform $L^1$ bounds.}
  By definition of convolution and since $\eta_\delta\ge0$ has unit mass,
  \begin{equation*}
    \int_{\cu_0} \mu_\gamma^\delta(x)\,dx
    =
    \int_{\cu_0}\int_{\cu_0} \eta_\delta(x-y)\,dx\,d\mu_\gamma(y)
    \le \mu_\gamma(\cu_0),
    \qquad \delta\in(0,1).
  \end{equation*}
  Hence, $\|\a_\delta\|_{L^1(\cu_0)} \le |\cu_0| + \mu_\gamma(\cu_0)$, proving~\ref{i.uniform.L1}.

  \smallskip

  \emph{Step 2: blow-up in $L^p$.}
  It is well known that for every $\gamma>0$ the measure $\mu_\gamma$ is almost surely singular with respect to Lebesgue measure; see, for instance,~\cite[Theorem~2.6]{RhodesVargas2014}.
  On the other hand, if $\nu$ is a finite Borel measure on $\cu_{-1}$ and $(\eta_\delta)_{\delta>0}$ is an approximate identity, then for any $p>1$,
  \begin{equation*}
    \sup_{\delta\in(0,1)}\|\nu*\eta_\delta\|_{L^p(\cu_{-1})}<\infty
    \quad\Longrightarrow\quad
    \nu \ll dx
    \ \text{and}\
    \frac{d\nu}{dx}\in L^p(\cu_{-1}),
  \end{equation*}
  so the uniform $L^p$-boundedness of mollifications forces absolute continuity.
  Applying this with $\nu=\mu_\gamma$ and using $\mu_\gamma\perp dx$, we conclude that for every $p>1$,
  \begin{equation*}
    \|\mu_\gamma^\delta\|_{L^p(\cu_{-1})}\to\infty
    \qquad\text{as }\delta\downarrow0.
  \end{equation*}
  Since $\a_\delta=1+\mu_\gamma^\delta$, this implies~\ref{i.blowup.Lp}.

  \smallskip

  \emph{Step 3: negative Besov regularity.}
  The Besov regularity of Gaussian multiplicative chaos has been analyzed in detail
  in~\cite{JunnilaSaksmanViitasaari2019}. Specializing~\cite[Theorem~4]{JunnilaSaksmanViitasaari2019} yields that for every $p\in(1,\infty)$ satisfying $\gamma^2<\nicefrac{2d}{p^2}$ and every $q\ge1$,
  \begin{equation*}
    \mu_\gamma \in B^\sigma_{p,q,\mathrm{loc}}(\cu_0)
    \qquad\text{for all }\sigma<-\frac{(p-1)\gamma^2}{2}.
  \end{equation*}
  Restricting to $\cu_{-1}$ and using standard Besov embeddings (see, e.g.,~\cite{Triebel1983}),
  \begin{equation*}
    B^\sigma_{p,q}(\cu_{-1})
    \hookrightarrow
    B^{\sigma-\nicefrac{d}{p}}_{\infty,q}(\cu_{-1}),
    \qquad 1<p<\infty,\ q\ge1,
  \end{equation*}
  we infer that for each $p>1$ with $\gamma^2<\nicefrac{2d}{p^2}$,
  \begin{equation}\label{e.GMC.Besov.regularity}
    \mu_\gamma \in B^\tau_{\infty,1}(\cu_{-1})
    \qquad\text{for all }\tau<-\frac{(p-1)\gamma^2}{2}-\frac{d}{p}.
  \end{equation}
  Choose $p^\ast\coloneqq \sqrt d/\gamma$. Then $\gamma^2<\nicefrac{2d}{(p^\ast)^2}=2\gamma^2$ and
  \begin{equation*}
    -\frac{(p^\ast-1)\gamma^2}{2}-\frac{d}{p^\ast}
    =
    \frac{\gamma(\gamma-3\sqrt d)}{2}.
  \end{equation*}
  Therefore, whenever $\gamma$ is small enough $p^\ast > 1$, and we can pick $\varepsilon>0$ such that
  \begin{equation*}
    2s \coloneqq \frac{\gamma(3\sqrt d-\gamma)}{2}-\varepsilon \in (0,1),
  \end{equation*}
  and then~\eqref{e.GMC.Besov.regularity} implies $\mu_\gamma \in B^{-2s}_{\infty,1}(\cu_{-1})$.

  Since convolution with a smooth compactly supported kernel is bounded on $B^{-2s}_{\infty,1}$,
  we have
  \begin{equation*}
    \sup_{\delta\in(0,1)} \|\mu_\gamma^\delta\|_{B^{-2s}_{\infty,1}(\cu_{-1})}<\infty,
  \end{equation*}
  and hence the same estimate for $\a_\delta=1+\mu_\gamma^\delta$. Renaming constants yields~\ref{i.negative.Besov}.
\end{proof}

\appendix

\section{Auxiliary lemmas}

The proof of the coarse-grained Caccioppoli inequality (Proposition~\ref{p.caccioppoli}) makes use of the following well-known iteration lemma. We also need it in Lemma~\ref{l.bombieri}, below. For the convenience of the reader, we present a complete proof. 

\begin{lemma}\label{l.iteration}
  Suppose that $A,\xi > 0$ and $\varrho:[\nicefrac{1}{2},T) \to [0,\infty)$  satisfy
\begin{equation*}\sup_{t \in [\nicefrac{1}{2},T)} (T-t)^\xi \varrho(t) < \infty
\end{equation*}
and, for every $\nicefrac{1}{2} \leq s < t \leq T$,
\begin{equation*}\varrho(s) \leq \frac{1}{2} \varrho(t) + \frac{A}{(t-s)^\xi}
\,.
\end{equation*}
Then there exists a universal constant $C > 1$ such that
\begin{equation*}\varrho(s) \leq C (2\xi)^\xi (T-s)^{-\xi} A
\,.
\end{equation*}
\end{lemma}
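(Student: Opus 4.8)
The plan is to iterate the hypothesized inequality along a geometric sequence of radii converging to $T$, a standard hole-filling argument. First I would set up a sequence of intermediate radii: fix $s \in [\nicefrac12, T)$, put $\tau \coloneqq T - s$, and define $t_0 \coloneqq s$ and $t_{i+1} \coloneqq t_i + (1-\theta)\theta^i \tau$ for a parameter $\theta \in (0,1)$ to be chosen, so that $t_i \uparrow T$ and $t_{i+1} - t_i = (1-\theta)\theta^i \tau$. Applying the assumed inequality with the pair $(t_i, t_{i+1})$ gives
\begin{equation*}
  \varrho(t_i) \leq \tfrac12 \varrho(t_{i+1}) + \frac{A}{(1-\theta)^\xi \theta^{i\xi}\tau^\xi}.
\end{equation*}
Iterating $n$ times yields
\begin{equation*}
  \varrho(s) = \varrho(t_0) \leq 2^{-n}\varrho(t_n) + \frac{A}{(1-\theta)^\xi \tau^\xi} \sum_{i=0}^{n-1} 2^{-i}\theta^{-i\xi}.
\end{equation*}

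The geometric series on the right converges as $n \to \infty$ provided $2\theta^\xi > 1$, i.e.\ $\theta > 2^{-1/\xi}$; I would simply fix $\theta \coloneqq 2^{-1/(2\xi)}$ (say), which makes $2^{-1}\theta^{-\xi} = 2^{-1/2} < 1$, so the sum is bounded by $(1 - 2^{-1/2})^{-1} \eqqcolon c_0$, a universal constant. The remainder term $2^{-n}\varrho(t_n)$ is controlled using the first hypothesis: since $\sup_{t}(T-t)^\xi \varrho(t) < \infty$ and $T - t_n = \theta^n \tau$, we have $\varrho(t_n) \leq M\theta^{-n}\tau^{-\xi}$ for some finite $M$, hence $2^{-n}\varrho(t_n) \leq M\tau^{-\xi}(2^{-1}\theta^{-1})^n \to 0$ because $2^{-1}\theta^{-1} = 2^{-1}2^{1/(2\xi)} < 1$ for all $\xi > 0$ (indeed $2^{1/(2\xi)} < 2$ iff $\xi > \nicefrac12$; for $\xi \le \nicefrac12$ one can instead choose $\theta$ closer to $1$, or simply note $2^{-n}\theta^{-n}\to 0$ fails only if $\theta \le \nicefrac12$, which is excluded by $\theta = 2^{-1/(2\xi)} \ge 2^{-1} $ with equality only at $\xi = \nicefrac12$ — so pick $\theta = \min\{2^{-1/(2\xi)}, \nicefrac34\}$ or any fixed value in $(\max\{2^{-1/\xi}, \nicefrac12\}, 1)$, which is nonempty since $2^{-1/\xi} < 1$). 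Passing to the limit $n \to \infty$ then gives
\begin{equation*}
  \varrho(s) \leq \frac{c_0 A}{(1-\theta)^\xi \tau^\xi}.
\end{equation*}

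Finally I would extract the stated form of the constant. With $\theta$ a fixed universal number bounded away from $1$ (uniformly in $\xi$ — this is the only mild subtlety, handled by the choice above), $(1-\theta)^{-\xi}$ is bounded by $C^\xi$ for a universal $C$; but to match the precise bound $C(2\xi)^\xi(T-s)^{-\xi}A$ claimed in the statement, it is cleaner to instead choose $\theta = \theta(\xi)$ so that $1 - \theta$ scales like $1/\xi$: take $\theta \coloneqq 1 - \frac{1}{2\xi}$ when $\xi \ge 1$ (and a fixed value when $\xi < 1$), check $2\theta^\xi > 1$ via $\theta^\xi \ge e^{-1}$ for this range, whence the series still sums to a universal constant, and $(1-\theta)^{-\xi} = (2\xi)^\xi$. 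Collecting the universal factors into a single $C > 1$ and recalling $\tau = T - s$ yields $\varrho(s) \leq C(2\xi)^\xi (T-s)^{-\xi} A$, as desired. The only real obstacle is bookkeeping: ensuring the chosen $\theta$ simultaneously keeps the geometric series convergent, kills the remainder term, and produces exactly the $(2\xi)^\xi$ dependence; splitting into the cases $\xi \ge 1$ and $\xi < 1$ makes all three requirements routine to verify.
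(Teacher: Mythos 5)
Your approach is genuinely different from the paper's. You iterate the hypothesis along a geometric sequence of radii $t_i\uparrow T$ and sum a geometric series; the paper instead uses a single step $t=s+\delta(T-s)$, multiplies by $(T-s)^\xi$, and then takes a supremum over $s$, using the a priori finiteness of $M=\sup_t (T-t)^\xi\varrho(t)$ to absorb the half of $M$ appearing on the right. Both routes produce essentially the same prefactor $(1-\theta)^{-\xi}\bigl(1-\tfrac12\theta^{-\xi}\bigr)^{-1}$ (with $\delta=1-\theta$), so the final optimization over the parameter to extract $C(2\xi)^\xi$ with a universal $C$ is a shared remaining computation. The paper's version is slightly cleaner because it dispenses with the iteration and the limit $n\to\infty$ in one stroke; yours is the more familiar hole-filling template and is perfectly valid.

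Two small issues in your write-up. First, a typo: from $T-t_n=\theta^n\tau$ you get $\varrho(t_n)\le M(\theta^n\tau)^{-\xi}=M\theta^{-n\xi}\tau^{-\xi}$, not $M\theta^{-n}\tau^{-\xi}$. Once corrected, the remainder is $2^{-n}\varrho(t_n)\le M\tau^{-\xi}(2^{-1}\theta^{-\xi})^n$, which tends to zero under exactly the same condition $\theta>2^{-1/\xi}$ that makes the series converge — so the entire parenthetical discussion about $\xi>\nicefrac12$ versus $\xi\le\nicefrac12$ is a red herring created by the typo and can be deleted. Second, the choice $\theta=1-\tfrac{1}{2\xi}$ for $\xi\ge 1$ is on the boundary at $\xi=1$: there $\theta=\tfrac12$ and $2\theta^\xi=1$, so the series diverges. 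You need a little slack, e.g.\ $\theta=1-\tfrac{1}{4\xi}$ (accepting $(4\xi)^\xi=2^\xi(2\xi)^\xi$ and observing this is still of the required form $(c\xi)^\xi$), or a more careful argument that $\inf_\theta (1-\theta)^{-\xi}(1-\tfrac12\theta^{-\xi})^{-1}\le C(2\xi)^\xi$ uniformly, which is what the paper leaves implicit in "optimizing over $\delta$."
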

\begin{proof}
  Denote $M \coloneqq \sup_{t \in [\nicefrac{1}{2},T)} (T-t)^\xi \varrho(t) < \infty$. Fix $s \in [\nicefrac{1}{2},T)$ and take $\delta \in (0,\nicefrac12)$ to be fixed and let $t \coloneqq s+ \delta(T-s)$. Then $t-s = \delta(T-s)$ and $T-t = (T-s)(1-\delta)$. Thus,
\begin{equation*}
(T-s)^\xi \varrho(s) \leq \frac{1}{2} 
\Bigl ( \frac{T-s}{T-t}\Bigr )^\xi(T-t)^\xi \varrho(t) + \frac{(T-s)^\xi}{(t-s)^\xi} A = \frac{1}{2} \frac{1}{(1-\delta)^\xi} (T-t)^\xi \varrho(t) + \delta^{-\xi} A
\end{equation*}
  This yields
\begin{equation*}(T-s)^\xi \varrho(s) \leq \frac{1}{2} \frac{1}{(1-\delta)^\xi} (T-t)^\xi \varrho(t) + \delta^{-\xi} A
\,.
\end{equation*}
  Taking the supremum over $s \in [\nicefrac{1}{2},T)$, we have
\begin{equation*}
  M \leq \frac{\delta^{-\xi}}{1-\nicefrac{1}{2}(1-\delta)^{-\xi}} A
\,.
\end{equation*}
  In particular since $\varrho(s') \leq (T-s')^{-\xi} M$, for any $s' \in [\nicefrac{1}{2},T)$ we have
\begin{equation*}
  \varrho(s') \leq \frac{\delta^{-\xi}}{1-\nicefrac{1}{2}(1-\delta)^{-\xi}} (T-s')^{-\xi} A.
\end{equation*}
  From the above we see that any $\delta \in (0,1-2^{-\nicefrac{1}{\xi}})$ works. Optimizing over $\delta$ gives the desired result.
\end{proof}

As in~\cite{BG}, our proof of the crossover lemma in Section~\ref{ss.crossover} is based on the following lemma.  

\begin{lemma}[Bombieri's lemma]
\label{l.bombieri}
Let~$q \in (0,\infty)$,~$\xi \in (0,\infty]$ and $A_1, A_2 \geq 1$. Suppose that~$v \in L^q(\cu_0)$ satisfies~$\inf_{\cu_0} v > 0$,
\begin{equation*}
\left \| \log v \right\|_{L^1(\cu_0)} \leq A_1,
\end{equation*}
 and, for every~$p \in (0,q)$ and~$\nicefrac{1}{2} \leq r < s \leq 1$,
\begin{equation*}\|v\|_{L^q(r\cu_0)} \leq \Bigl ( \frac{A_2}{(s-r)^\xi} \Bigr )^{\nicefrac{1}{p}} \|v\|_{L^{p}(s\cu_0)}.
\end{equation*}
Then there exists a constant $C < \infty$ such that
\begin{equation*}\|v\|_{L^q(\frac{1}{2} \cu_0)} \leq \exp \left(C (1+\nicefrac 1q) (C\xi)^{6\xi} A_1 A_2^6\right).
\end{equation*}
\end{lemma}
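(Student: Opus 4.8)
The plan is to run the Bombieri--Giusti iteration, keeping every constant explicit. First I would convert the logarithmic hypothesis into a weak-type bound: Chebyshev's inequality applied to $\|\log v\|_{L^1(\cu_0)}\le A_1$ gives $|\{x\in\cu_0 : v(x)>e^{\lambda}\}|=|\{\log v>\lambda\}|\le A_1/\lambda$ for every $\lambda>0$. I would then work with the nondecreasing function $\varrho(t):=\|v\|_{L^q(t\cu_0)}$ on $[\nicefrac{1}{2},1]$, which is finite because $v\in L^q(\cu_0)$; this finiteness is precisely the qualitative bound $\sup_{t<1}(1-t)^{\xi'}\varrho(t)^{p}<\infty$ that lets me invoke the iteration Lemma~\ref{l.iteration} at the end without any a priori control on $\|v\|_{L^q(\cu_0)}$. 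Note that $\varrho(\nicefrac12)$ is exactly the quantity to be bounded.

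The heart of the argument is a self-improving inequality obtained by interleaving the reverse H\"older hypothesis with the weak-type bound. Fix $\nicefrac{1}{2}\le r<s\le1$, a small exponent $p\in(0,q/2]$, and a threshold $\tau>1$ to be chosen. Splitting $\int_{s\cu_0}v^{p}$ at level $\tau$ and applying H\"older with exponents $q/p$ and $q/(q-p)$ on the high part, followed by the weak-type bound, yields $\int_{s\cu_0}v^{p}\le \tau^{p}+\varrho(s)^{p}\,(A_1/\log\tau)^{1-p/q}$. Raising the hypothesis $\|v\|_{L^q(r\cu_0)}\le(A_2(s-r)^{-\xi})^{1/p}\|v\|_{L^p(s\cu_0)}$ to the power $p$ and inserting the previous line gives
\[
\varrho(r)^{p}\le \frac{A_2}{(s-r)^{\xi}}\Bigl(\tau^{p}+\varrho(s)^{p}\bigl(\tfrac{A_1}{\log\tau}\bigr)^{1-p/q}\Bigr).
\]
Choosing $\tau$ so that the coefficient of $\varrho(s)^{p}$ equals $\nicefrac{1}{2}$ forces $\log\tau$ to be of order $A_1\bigl(A_2(s-r)^{-\xi}\bigr)^{q/(q-p)}$, hence $\le CA_1A_2^{2}(s-r)^{-2\xi}$ since $p\le q/2$, and leaves $\varrho(r)^{p}\le\tfrac12\varrho(s)^{p}+\frac{A_2}{(s-r)^{\xi}}\tau^{p}$. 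An appropriate calibration of $p$ (and, if needed, of the sequence of radii used in the iteration rather than an arbitrary pair $r<s$) brings the right-hand side to the form $\tfrac12\varrho(s)^{p}+A\,(s-r)^{-\xi'}$ with $\xi'$ a fixed multiple of $\xi$ and $A$ polynomial in $A_1,A_2$; Lemma~\ref{l.iteration}, applied to $t\mapsto\varrho(t)^{p}$ on $[\nicefrac{1}{2},1)$ (its qualitative hypothesis holds because $\varrho(t)^{p}\le\|v\|_{L^q(\cu_0)}^{p}<\infty$), then yields $\varrho(\nicefrac{1}{2})^{p}\le C(2\xi')^{\xi'}A$. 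Taking the $p$-th root and optimizing over $p$ and the auxiliary parameters produces the stated bound $\exp\bigl(C(1+\nicefrac{1}{q})(C\xi)^{6\xi}A_1A_2^{6}\bigr)$, the sixth powers and the exponent $6\xi$ emerging from compounding the choice of $\tau$ (which costs an extra power of $A_2$ and doubles the $\xi$-exponent), the choice of $p$, and the $(2\xi')^{\xi'}$ coming out of Lemma~\ref{l.iteration}.

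The main obstacle is exactly this calibration in the core estimate. Neither hypothesis alone controls $\|v\|_{L^p(s\cu_0)}$ for small $p$ — the weak-$L^1$ bound on $\log v$ controls no positive power of $v$, and the reverse H\"older inequality used in isolation is circular — so the threshold $\tau$ and the exponent $p$ must be chosen to make the two ingredients cooperate and, crucially, to present the error term as a genuine negative power of $(s-r)$, which is the precise form under which Lemma~\ref{l.iteration} absorbs the a priori uncontrolled quantity $\|v\|_{L^q(\cu_0)}$; threading this needle is where the real work lies. Everything after that is bookkeeping: propagating the explicit constants through the choice of $\tau$, the choice of $p$, the iteration lemma, and the final optimization, systematically using $A_1,A_2\ge1$ and $p\le q/2$ to collapse exponents such as $q/(q-p)$ and $\tfrac{1+1/p}{1/p-1/q}$ into small absolute constants.
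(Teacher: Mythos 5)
Your overall plan — split the low exponent $L^p$ norm at a threshold $\tau$, use Chebyshev on $\log v$ for the high part, insert into the reverse H\"older hypothesis, and iterate — is the right blueprint, and it is essentially how Bombieri--Giusti works. The gap is in the absorption step, and it is not merely a matter of "calibration": as written, the argument cannot be closed by Lemma~\ref{l.iteration}.

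Concretely, choosing $\tau$ so that $\frac{A_2}{(s-r)^\xi}\bigl(\tfrac{A_1}{\log\tau}\bigr)^{1-p/q}=\tfrac12$ forces
\begin{equation*}
\log\tau \;=\; A_1\Bigl(\frac{2A_2}{(s-r)^\xi}\Bigr)^{\!\frac{q}{q-p}},
\end{equation*}
and then the remainder in your recursion is
\begin{equation*}
\frac{A_2}{(s-r)^\xi}\,\tau^{p}
\;=\;
\frac{A_2}{(s-r)^\xi}\exp\!\Bigl(p\,A_1\Bigl(\tfrac{2A_2}{(s-r)^\xi}\Bigr)^{\frac{q}{q-p}}\Bigr),
\end{equation*}
which for any fixed $p>0$ grows \emph{superpolynomially} in $(s-r)^{-1}$. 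Lemma~\ref{l.iteration} requires the error to be bounded by $A/(s-r)^{\xi'}$ for a \emph{fixed} $A$ and $\xi'$ over \emph{all} pairs $\nicefrac12\le r<s\le 1$, so it simply does not apply to $t\mapsto\varrho(t)^p$ with $p$ fixed. If instead you let $p$ shrink as $s-r\to 0$ to tame $\tau^p$, then the quantity $\varrho(t)^p$ is no longer a single function of $t$, and the iteration lemma is again inapplicable; replacing it by a manual iteration over a geometric sequence of radii runs into the same obstruction, since $\varrho(r_{m+1})^{p_{m+1}}$ and $\varrho(r_m)^{p_m}$ do not telescope and attempting to relate them brings back the lost factor. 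This is exactly the needle you correctly identified, but the proposal does not actually thread it.

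The paper's proof supplies the missing idea: it runs the iteration in the \emph{logarithmic} variable $f(t)\coloneqq\log\|v\|_{L^q(t\cu_0)}$, which is a fixed function of $t$ even though the intermediate exponent $p$ changes from step to step. It splits $\|v\|_{L^p(s\cu_0)}$ at level $e^{f(s)/3}$ rather than at a free threshold $\tau$, and then chooses $p$ (via the intermediate value theorem) so that the Chebyshev contribution of the high part is $\le 1$, yielding $\|v\|_{L^p(s\cu_0)}\le 2e^{f(s)/3}$. After applying the reverse H\"older this produces a recursion in which $f(s)$ appears with the coefficient
\begin{equation*}
\frac{\log\bigl(A_2(s-r)^{-\xi}\bigr)}{\log\bigl(f(s)/(3A_1)\bigr)}+\frac13,
\end{equation*}
and the case split — either $f(s)\ge 3A_1A_2^6(s-r)^{-6\xi}$, in which case this coefficient is $\le\tfrac12$, or else $f(r)\le f(s)$ is already small — is precisely the nonlinear absorption that your polynomial-error requirement needs. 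This produces the genuine recursion $f(r)\le\tfrac12 f(s)+ (1+\nicefrac1q)\,6A_1A_2^6/(s-r)^{6\xi}$, to which Lemma~\ref{l.iteration} then applies. Unless you rework the $\tau$-choice (e.g.~balancing the two terms rather than forcing the coefficient to be $\tfrac12$, which essentially reproduces the paper's $f$-recursion), your version of the argument does not reach a form the iteration lemma can absorb.
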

\begin{proof}
  Fix $r,s$ such that $\nicefrac{1}{2} \leq r < s \leq 1$ and define $f(s) \coloneqq \log\|v\|_{L^q(s\cu_0)}$. We then have
  \begin{align*}
\|v\|_{L^p(s \cu_0)}
\leq&
\|\indc_{\{ \log v \leq f(s)/3\}} v\|_{L^p(s \cu_0)} + \|\indc_{\{ \log v > f(s)/3\}} v\|_{L^p(s \cu_0)}
\\
& \leq
    \exp\left( \frac13 f(s)  \right) + \|v\|_{L^q(s \cu_0)} \biggl ( \int_{\cu_0} \indc_{\{ \log v > f(s)/3\}} \, dx \biggr )^{\!\frac{1}{p} - \frac{1}{q}}
\\
& \leq
     \exp\Bigl( \frac13 f(s)  \Bigr) +  \exp( f(s) ) \Bigl ( \frac{3 A_1}{f(s)}  \Bigr )^{\frac{1}{p} - \frac{1}{q}}
  \end{align*}
  If $f(s) > 3 A_1$, we can choose, by the intermediate value theorem, an exponent~$p$ such that
\begin{equation*}\frac{ \log \left ( \frac{f(s)}{ 3 A_1}  \right )}{f(s)} =    
    \left (\frac{1}{p} - \frac{1}{q} \right )^{-1} \,,
\end{equation*}
  which, combined with the above, gives us
\begin{equation*}\|v\|_{L^p(s \cu_0)} \leq \exp\Bigl( \frac13 f(s)  \Bigr)  + 1 \leq 2\exp\Bigl( \frac13 f(s)  \Bigr) \,.
\end{equation*}
  Using the assumption on the reverse Hölder's inequality, we then get
\begin{align*}
f(r) \leq  \frac{1}{p} \log\left ( \frac{A_2}{(s-r)^\xi} \right ) + \log \|v\|_{L^p(s \cu_0)} 
\leq f(s) \Biggl ( \frac{\log ( \frac{A_2}{(s-r)^\xi}  )}{\log  ( \frac{f(s)}{3A_1}   )}  + \frac13  \Biggr ) + \log 2
    +\frac{1}{q} \log \left ( \frac{A_2}{(s-r)^\xi} \right )
    \,.
  \end{align*}
  Next, if we have
\begin{equation*}3A_1 < \frac{3 A_1 A_2^6}{(s-r)^{6\xi}} \leq f(s)
    \implies 
\log \Bigl ( \frac{f(s)}{3A_1}  \Bigr ) \geq 6 \log\Bigl ( \frac{A_2}{(s-r)^\xi} \Bigr )
\,,
\end{equation*}
then we obtain
\begin{equation*}f(r) \leq \frac{1}{2} f(s) + \log 2 +\frac{1}{q} \log \Bigl ( \frac{A_2}{(s-r)^\xi} \Bigr ) \,.
\end{equation*}
Otherwise, we get
\begin{equation*}f(r) \leq f(s) \leq \frac{3 A_1 A_2^2}{(s-r)^{2\xi}}\,.
\end{equation*}
  Putting these two cases together we have, for every $\nicefrac{1}{2} \leq r < s \leq 1$,
\begin{equation*}f(r) \leq \frac{1}{2} f(s) + \frac{6A_1 A_2^6}{(s-r)^{6\xi}}
    +\frac{1}{q} \log \Bigl ( \frac{A_2}{(s-r)^\xi} \Bigr )
     \leq
     \frac12 f(s) + (1+\nicefrac 1q)\frac{6A_1 A_2^6}{(s-r)^{6\xi}} \,.
\end{equation*}
An application of Lemma~\ref{l.iteration} gives us
\begin{equation*}f(\nicefrac{1}{2}) \leq C(1+\nicefrac 1q)  (12\xi)^{6\xi} A_1 A_2^6 \,.
\end{equation*}
  Exponentiating this gives the desired result.
\end{proof}
\section{Sobolev spaces and test functions} \label{app.sobolev}

For the basic properties of the weighted Sobolev space~$H^1_\a(\cu_0)$, recall Section~\ref{ss.sobolev.spaces}. We now prove the following chain rule, which is used in the proof of the Caccioppoli inequality (Proposition~\ref{p.caccioppoli}).

\begin{lemma}
  \label{l.chain.rule}
  Consider a smooth function~$F:\R \to \R$ such that~$F'\in L^\infty(\R)$. Then, for every~$u \in H_{\a}^1(\cu_0)$, we have~$F(u) \in H_{\a}^1(\cu_0)$ and the weak chain rule is valid:
\begin{equation*}
\nabla (F(u)) = F'(u) \nabla u
\,.
\end{equation*}
\end{lemma}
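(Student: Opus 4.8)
The plan is to reduce the statement to the classical chain rule for Sobolev functions by exploiting the structure of the weighted space $H^1_\a(\cu_0)$, which was built as the completion of $C^\infty(\cu_0)$ under the norm $\|\cdot\|_{H^1_\a(\cu_0)}$. First I would note that, by~\eqref{e.grad.and.flux.in.Lone}, any $u\in H^1_\a(\cu_0)$ satisfies $u,\nabla u\in L^1(\cu_0)$ with moreover $\a^{\nicefrac12}\nabla u\in L^2(\cu_0)$; in particular $u\in W^{1,1}(\cu_0)$, so the standard chain rule for $W^{1,1}$ functions composed with a Lipschitz $F$ (here $F\in C^1$ with $F'\in L^\infty$, hence globally Lipschitz) already gives $F(u)\in W^{1,1}(\cu_0)$ together with the weak identity $\nabla(F(u))=F'(u)\nabla u$ in the sense of distributions. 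This disposes of the chain-rule formula itself; what remains is to verify that $F(u)$ actually lies in the weighted space $H^1_\a(\cu_0)$, i.e. that it is a $\|\cdot\|_{H^1_\a(\cu_0)}$-limit of smooth functions, not merely that its weighted energy is finite.

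Next I would check finiteness of the weighted norm. Since $|\nabla(F(u))|=|F'(u)||\nabla u|\le \|F'\|_{L^\infty}|\nabla u|$ pointwise a.e., we get
\begin{equation*}
\int_{\cu_0}\nabla(F(u))\cdot\a\nabla(F(u))
=\int_{\cu_0}|F'(u)|^2\,\nabla u\cdot\a\nabla u
\le \|F'\|_{L^\infty(\R)}^2\int_{\cu_0}\nabla u\cdot\a\nabla u<\infty,
\end{equation*}
using that $\a$ is positive semi-definite so $\nabla u\cdot\a\nabla u\ge0$. Also $F(u)\in L^2(\cu_0)$ because $|F(u)|\le |F(0)|+\|F'\|_{L^\infty}|u|$ and $u\in L^2(\cu_0)$ (recall $\|u\|_{L^2}\le\|u\|_{H^1_\a}<\infty$). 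Hence $\|F(u)\|_{H^1_\a(\cu_0)}<\infty$.

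To conclude membership in $H^1_\a(\cu_0)$, I would use an approximation argument. Take $u_n\in C^\infty(\cu_0)$ with $\|u_n-u\|_{H^1_\a(\cu_0)}\to0$ (possible by definition of the space), so in particular $u_n\to u$ in $L^2$ and $\a^{\nicefrac12}\nabla u_n\to\a^{\nicefrac12}\nabla u$ in $L^2(\cu_0;\Rd)$. Since $F\in C^1$ with bounded derivative, each $F(u_n)\in C^\infty(\cu_0)$ with $\nabla(F(u_n))=F'(u_n)\nabla u_n$, so $F(u_n)\in H^1_\a(\cu_0)$. Passing to a subsequence along which $u_n\to u$ a.e., we get $F'(u_n)\to F'(u)$ a.e.\ (continuity of $F'$) and $F(u_n)\to F(u)$ in $L^2$ by dominated convergence (dominant $|F(0)|+\|F'\|_{L^\infty}(\sup_n|u_n|)$, with the sup controlled after a further subsequence by an $L^2$ function via the usual Riesz-type argument, or more simply by writing $F(u_n)-F(u)=\int_0^1 F'(u+\theta(u_n-u))\,d\theta\,(u_n-u)$ and bounding by $\|F'\|_{L^\infty}\|u_n-u\|_{L^2}$). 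For the gradients,
\begin{equation*}
\a^{\nicefrac12}\nabla(F(u_n))-\a^{\nicefrac12}\nabla(F(u))
=F'(u_n)\bigl(\a^{\nicefrac12}\nabla u_n-\a^{\nicefrac12}\nabla u\bigr)
+\bigl(F'(u_n)-F'(u)\bigr)\a^{\nicefrac12}\nabla u,
\end{equation*}
whose first term has $L^2$-norm $\le\|F'\|_{L^\infty}\|\a^{\nicefrac12}\nabla u_n-\a^{\nicefrac12}\nabla u\|_{L^2}\to0$ and whose second term $\to0$ in $L^2$ by dominated convergence (dominant $2\|F'\|_{L^\infty}|\a^{\nicefrac12}\nabla u|\in L^2$). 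Therefore $F(u_n)\to F(u)$ in $H^1_\a(\cu_0)$, so $F(u)\in H^1_\a(\cu_0)$, and the limit of the identities $\nabla(F(u_n))=F'(u_n)\nabla u_n$ (which also hold in $L^1$) gives $\nabla(F(u))=F'(u)\nabla u$ a.e. The one point requiring a little care—the main obstacle—is the a.e.\ convergence of $F'(u_n)$, which forces passing to a subsequence; everything else is routine once one records that $H^1_\a(\cu_0)\hookrightarrow W^{1,1}(\cu_0)$ via~\eqref{e.grad.and.flux.in.Lone}.
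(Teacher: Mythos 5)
Your proof is correct and follows essentially the same approximation scheme as the paper: take smooth $u_n\to u$ in $H^1_\a$, split $F'(u_n)\nabla u_n-F'(u)\nabla u$ into the two terms you write, control the first by the Lipschitz bound on $F'$ and the second by dominated convergence along an a.e.\ convergent subsequence. The one cosmetic difference is that you invoke the classical $W^{1,1}$ chain rule (justified via~\eqref{e.grad.and.flux.in.Lone}) to identify the weak derivative up front, whereas the paper derives the identity at the end by testing $F(u_m)$ against $\nabla\cdot\varphi$ and passing to the limit using $\a^{-\nicefrac12}\in L^2$; both routes are valid and the substance of the argument is the same.
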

\begin{proof}
  Since $H^1_\a(\cu_0)$ is defined as the closure of smooth functions we can take a sequence of smooth functions $u_m$ converging to $u$ in $H^1_\a(\cu_0)$.
  Since $F'$ is bounded, we have trivially
\begin{equation*}\| F(u_m) - F(u) \|_{\underline{L}^2(\cu_0)} \leq \|F'\|_{L^\infty(\R)} \| u_m - u \|_{\underline{L}^2(\cu_0)}.
\end{equation*}
  Moreover,
\begin{equation*}F'(u_m) \nabla u_m - F'(u) \nabla u = F'(u_m) (\nabla u_m - \nabla u) + (F'(u_m) - F'(u)) \nabla u.
\end{equation*}
  For the first term
\begin{equation*}\fint_{\cu_0} |F'(u_m) \a^{\nicefrac{1}{2}}(\nabla u_m - \nabla u)|^2 \leq \|F'\|_{L^\infty(\R)}^2 \fint_{\cu_0} |\a^{\nicefrac{1}{2}}(\nabla u_m - \nabla u)|^2 \to 0.
\end{equation*}
  For the second term,
\begin{equation*}\fint_{\cu_0} |(F'(u_m) - F'(u)) \a^{\nicefrac{1}{2}} \nabla u|^2 \leq C \|F'\|_{L^\infty(\R)}^2 \fint_{\cu_0} |\a^{\nicefrac{1}{2}} \nabla u|^2 < \infty.
\end{equation*}
  And since $u_m \to u$ in $L^2(\cu_0)$, we can extract a subsequence such that $u_m \to u$ pointwise a.e.~in $\cu_0$. By the continuity of $F'$, we have $F'(u_m) \to F'(u)$ pointwise a.e.~in $\cu_0$. Therefore, by the dominated convergence theorem,
\begin{equation*}\fint_{\cu_0} |(F'(u_m) - F'(u)) \a^{\nicefrac{1}{2}} \nabla u|^2 \to 0.
\end{equation*}
  Finally, we need to identify the weak derivative of $F(u)$. By abuse of notation we let $\varphi$ denote the vector-valued function with all components in $C^\infty_c(\cu_0)$, then
\begin{equation*}\fint_{\cu_0} F(u) \nabla \cdot \varphi = \lim_{m \to \infty} \fint_{\cu_0} F(u_m) \nabla \cdot \varphi = -\lim_{m \to \infty} \fint_{\cu_0} F'(u_m) \nabla u_m \cdot \varphi
\end{equation*}
  and
  \begin{align*}
    \biggl| \fint_{\cu_0} F'(u_m) \nabla u_m \cdot \varphi - \fint_{\cu_0} F'(u) \nabla u \cdot \varphi \biggr| 
 & =
    \biggl| \fint_{\cu_0} \a^{\nicefrac{1}{2}}(F'(u_m) \nabla u_m - F'(u) \nabla u) \cdot \a^{-\nicefrac{1}{2}}\varphi
    \biggr| 
\\
 & \leq
\| \a^{\nicefrac{1}{2}}(F'(u_m) \nabla u_m - F'(u) \nabla u) \|_{\underline{L}^2(\cu_0)} \| \a^{-\nicefrac{1}{2}}\varphi \|_{\underline{L}^2(\cu_0)} \to 0,
  \end{align*}
  since $\varphi$ is bounded and $\a^{-\nicefrac{1}{2}} \in L^2(\cu_0)$. Thus, the weak derivative of $F(u)$ is $F'(u) \nabla u$.
\end{proof}

From the above we can cover the case of $F(u) = |u|$ by the classical approximation with $F_\varepsilon(u) = \sqrt{u^2 + \varepsilon^2}-\varepsilon$, which is smooth and converges to $|u|$ as $\varepsilon \to 0$, also satisfying the assumptions of Lemma~\ref{l.chain.rule}. Now by dominated convergence and the classical interpretation of the weak derivative of $|u|$ we have that $|u| \in H_\a^1(\cu_0)$ and, consequently,~$u_+, u_- \in H_\a^1(\cu_0)$.

Next, we prove that the testing procedure used in the proof of the Caccioppoli inequality (Proposition~\ref{p.caccioppoli}) is valid under the assumptions on the fluxes given by the finiteness of the Besov norms $\Theta_{s,1-s}(\cu_0;\baconsol)$ and $\Theta_{s,1-s}(\cu_0;\baconsub)$.

\begin{lemma} \label{l.product.rule}
  Let $u \in H^1_\a(\cu_0)$.
  \begin{enumerate}[label=\textup{(\alph*)}]
    \item If $u$ is a weak solution of $-\nabla \cdot (\a \nabla u) = 0$ in $\cu_0$ and
    $\Theta_{s,1-s}(\cu_0;\baconsol) < \infty$ for some $s \in (0,1)$, then for every $g \in H^1_\a(\cu_0)$ and $\varphi \in C^\infty_c(\cu_0)$ we have
    \begin{equation} \label{e.product.rule}
      0
      = \fint_{\cu_0} \varphi \,\a \nabla u \cdot \nabla g
      + \fint_{\cu_0} g \,\a \nabla u \cdot \nabla \varphi\,.
    \end{equation}
    \item If $u$ is a weak nonnegative subsolution of $-\nabla \cdot (\a \nabla u) = 0$ in $\cu_0$ and
    $\Theta_{s,1-s}(\cu_0;\baconsub) < \infty$ for some $s \in (0,1)$, then for every $0 \le g \in H^1_\a(\cu_0)$ and $0 \le \varphi \in C^\infty_c(\cu_0)$ we have
    \begin{equation} \label{e.product.rule.sub}
      0 
      \ge \fint_{\cu_0} \varphi \,\a \nabla u \cdot \nabla g
      + \fint_{\cu_0} g \,\a \nabla u \cdot \nabla \varphi\,.
    \end{equation}
  \end{enumerate}
\end{lemma}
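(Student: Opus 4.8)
The plan is to establish both \eqref{e.product.rule} and \eqref{e.product.rule.sub} by a density argument: approximate $g$ by functions for which the product $\varphi g$ is a legitimate test function, test the weak (sub)solution (in)equation with that product, expand the gradient, and pass to the limit. The only step that is not completely routine is the passage to the limit in the term $\fint_{\cu_0} g\,\a\nabla u\cdot\nabla\varphi$: a priori $\a\nabla u$ is known only to lie in $L^1(\cu_0)$ by~\eqref{e.grad.and.flux.in.Lone} while $g_m\to g$ only in $L^2$, so a naive pairing fails — and this is exactly where the coarse-grained hypothesis must be used. I would carry out part~(a) in detail and then indicate the (minor) changes needed for part~(b).

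For part~(a): first observe that both sides of~\eqref{e.product.rule} are unchanged when $g$ is replaced by $g+c$, $c\in\R$ (since $\nabla c=0$ and $c\varphi\in C^\infty_c(\cu_0)$ is admissible, so $\fint_{\cu_0}\a\nabla u\cdot\nabla(c\varphi)=0$), so we may assume $(g)_{\cu_0}=0$. By definition of $H^1_\a(\cu_0)$ there are $g_m\in C^\infty(\cu_0)$ with $g_m\to g$ in $H^1_\a(\cu_0)$, and since $g_m\in L^2(\cu_0)$ we may subtract constants so that $(g_m)_{\cu_0}=0$. As $\varphi$ has compact support in $\cu_0$, the product $\varphi g_m$ lies in $C^\infty_c(\cu_0)$, and testing $-\nabla\cdot(\a\nabla u)=0$ with it gives $0=\fint_{\cu_0}\varphi\,\a\nabla u\cdot\nabla g_m+\fint_{\cu_0} g_m\,\a\nabla u\cdot\nabla\varphi$. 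In the first term $\a^{\nicefrac12}\nabla g_m\to\a^{\nicefrac12}\nabla g$ in $L^2(\cu_0)$ while $\varphi\,\a^{\nicefrac12}\nabla u\in L^2(\cu_0)$, so by Cauchy--Schwarz it converges to $\fint_{\cu_0}\varphi\,\a\nabla u\cdot\nabla g$. For the second term I would use that $\Theta_{s,1-s}(\cu_0;\baconsol)<\infty$ forces both $\Lambda_s(\cu_0;\baconsol)<\infty$ and $\lambda_{1-s}^{-1}(\cu_0)<\infty$ (by the monotonicity~\eqref{e.ellipticity.monotone}): writing $w_m\coloneqq g_m-g$ (zero mean), Besov duality~\eqref{e.Besov.dual.norm} together with~\eqref{e.dual.norms.relation}, applied componentwise, gives $\bigl|\fint_{\cu_0} w_m\,\a\nabla u\cdot\nabla\varphi\bigr|\le C\,\|\a\nabla u\|_{\BesovDualSum{2}{1}{-s}(\cu_0)}\,\|w_m\nabla\varphi\|_{\Besov{2}{\infty}{s}(\cu_0)}$. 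The first factor is finite and $\le\cs_s^{-1}\Lambda_s^{\nicefrac12}(\cu_0;\baconsol)\|\a^{\nicefrac12}\nabla u\|_{\underline{L}^2(\cu_0)}$ because $u\in\baconsol(\cu_0)$, by the flux bound in Lemma~\ref{l.besov.fluxes.easy}; for the second factor I would apply the product-rule estimate~\eqref{e.besov.product.rule} (legitimate since $(w_m)_{\cu_0}=0$) and then the gradient bound of Lemma~\ref{l.besov.fluxes.easy} with parameter $1-s$, obtaining $\|w_m\nabla\varphi\|_{\Besov{2}{\infty}{s}(\cu_0)}\le C\|\nabla\varphi\|_{\Wul{1}{\infty}(\cu_0)}\cs_{1-s}^{-1}\lambda_{1-s}^{-\nicefrac12}(\cu_0)\|\a^{\nicefrac12}\nabla w_m\|_{\underline{L}^2(\cu_0)}$. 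Multiplying, $\bigl|\fint_{\cu_0} w_m\,\a\nabla u\cdot\nabla\varphi\bigr|\le C(d,\varphi,s)\,\Theta_{s,1-s}^{\nicefrac12}(\cu_0;\baconsol)\,\|\a^{\nicefrac12}\nabla u\|_{\underline{L}^2(\cu_0)}\,\|\a^{\nicefrac12}\nabla w_m\|_{\underline{L}^2(\cu_0)}\to0$, so $\fint_{\cu_0} g_m\,\a\nabla u\cdot\nabla\varphi\to\fint_{\cu_0} g\,\a\nabla u\cdot\nabla\varphi$; passing to the limit in the tested identity yields~\eqref{e.product.rule}.

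For part~(b), where $u\in\A_-(\cu_0)=\baconsub(\cu_0)$ and $g,\varphi\ge0$, the scheme is identical after two adjustments. First, one cannot subtract the mean of $g$ (it would break nonnegativity, and now $\fint_{\cu_0}\a\nabla u\cdot\nabla\varphi$ is only $\le0$); instead I would split $g_m-g$ into the drift $(g_m-g)_{\cu_0}\to0$, which multiplied by the fixed finite number $\fint_{\cu_0}\a\nabla u\cdot\nabla\varphi$ converges to $0$, plus the zero-mean part $w_m\coloneqq (g_m-(g_m)_{\cu_0})-(g-(g)_{\cu_0})$, to which the Besov estimate of the previous paragraph applies verbatim with $\baconsub$ in place of $\baconsol$ and using $\Theta_{s,1-s}(\cu_0;\baconsub)<\infty$. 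Second, the approximants must be nonnegative so that $\varphi g_m\ge0$ is admissible; this is achieved by taking $g_m'\in C^\infty(\cu_0)$ with $g_m'\to g$ in $H^1_\a(\cu_0)$ and setting $g_m\coloneqq\sqrt{(g_m')^2+\varepsilon_m^2}$ for a suitable sequence $\varepsilon_m\downarrow0$: these are smooth and strictly positive, and $g_m\to g$ in $H^1_\a(\cu_0)$ because $g\ge0$ and the smooth truncations $t\mapsto\sqrt{t^2+\varepsilon^2}$ have $L^\infty$-bounded derivative, so that the convergence of $\a^{\nicefrac12}\nabla g_m$ follows from dominated convergence together with Lemma~\ref{l.chain.rule} and the discussion after it. Testing with $\varphi g_m\ge0$ then turns the tested identity into $\le0$, which survives the limit, giving~\eqref{e.product.rule.sub}. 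I expect the main obstacle to be, in both parts, precisely the limit of $\fint_{\cu_0} g_m\,\a\nabla u\cdot\nabla\varphi$: it is the one point where the mere $L^1$-integrability of $\a\nabla u$ is insufficient, and the coarse-grained assumption $\Theta_{s,1-s}<\infty$ is invoked to place $\a\nabla u$ in the negative Besov space $\BesovDualSum{2}{1}{-s}(\cu_0)$ and pair it against $g_m\nabla\varphi\in\Besov{2}{\infty}{s}(\cu_0)$; everything else is standard functional-analytic bookkeeping.
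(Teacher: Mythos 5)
Your proof is correct and uses the same central mechanism as the paper: approximate $g$ by smooth $g_m$, test with $\varphi g_m \in C^\infty_c(\cu_0)$, pass the first term to the limit by Cauchy--Schwarz, and control the critical second term by pairing $\a\nabla u \in \BesovDualSum{2}{1}{-s}(\cu_0)$ against $(g_m-g)\nabla\varphi \in \Besov{2}{\infty}{s}(\cu_0)$ via~\eqref{e.dual.norms.relation} and Lemmas~\ref{l.besov.poincare} and~\ref{l.besov.fluxes.easy}, so that $\Theta_{s,1-s}<\infty$ enters exactly as in the paper. Your organization is slightly more direct: the paper first treats bounded $g$ and then extends by truncation (Steps~1--2), whereas you correctly observe that the limit passages never use boundedness, and you supply the explicit nonnegative smooth approximants $g_m=\sqrt{(g_m')^2+\varepsilon_m^2}$ for part~(b) that the paper merely asserts to exist; your mean-zero normalization in (a) and the $(g_m-g)_{\cu_0}$/$w_m$ split in (b) do the same bookkeeping the paper handles by bounding $|(g-g_m)_{\cu_0}|\leq\|g-g_m\|_{\underline{L}^2(\cu_0)}$ directly.
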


\begin{proof}
  \phantom{.}
  \smallskip

  \emph{Step 1: Proof for bounded~$g$.}  
  Let $g \in H^1_\a(\cu_0) \cap L^\infty(\cu_0)$. Since $H^1_\a(\cu_0)$ is defined as the closure of smooth functions, there exists a sequence $g_m \in C^\infty(\cu_0) \cap L^\infty(\cu_0)$ with
  $g_m \to g$ in $H^1_\a(\cu_0)$. For each $m$, we have the following for $v_m = g-g_m$
  \begin{equation*}
    \fint_{\cu_0} \a \nabla (v_m \varphi) \cdot \nabla (v_m \varphi)
    =
    \fint_{\cu_0} \varphi^2\, \a \nabla v_m \cdot \nabla v_m + 2 \fint_{\cu_0} v_m \varphi\, \a \nabla v_m \cdot \nabla \varphi + \fint_{\cu_0} v_m^2\, \a \nabla \varphi \cdot \nabla \varphi\,.
  \end{equation*}
  Since $g_m$ and $g$ are bounded, by using Cauchy-Schwarz, the fact that $\a \in L^1(\cu_0)$ and the convergence in $H^1_\a(\cu_0)$ of $g_m$ to $g$, we can pass to the limit as $m \to \infty$ to obtain that $g_m \varphi \to g\varphi \in H^1_{\a,0}(\cu_0)$.

  Secondly, for each $m$, we may test the weak formulation with
  $\psi = g_m \varphi \in C^\infty_c(\cu_0)$ to obtain
  \begin{equation*}
    0
    = \fint_{\cu_0} \a \nabla u \cdot \nabla (g_m \varphi)
    = \fint_{\cu_0} \varphi \,\a \nabla u \cdot \nabla g_m
    + \fint_{\cu_0} g_m \,\a \nabla u \cdot \nabla \varphi \,.
  \end{equation*}
  By Cauchy–Schwarz, the first term converges directly, since $g_m \to g$ in $H^1_\a(\cu_0)$.
  It remains to pass to the limit in
  \begin{equation*}
    \fint_{\cu_0} g_m \,\a \nabla u \cdot \nabla \varphi\,.
  \end{equation*}

  Since $\Theta_{s,1-s}(\cu_0;\baconsol) < \infty$,
  Lemma~\ref{l.besov.fluxes.easy} implies that
  $\a \nabla u \in \BesovDualSum{2}{1}{-s}(\cu_0)$ and
  $\nabla u \in \BesovDualSum{2}{1}{s-1}(\cu_0)$.
  Moreover, by Lemma~\ref{l.besov.poincare} and Lemma~\ref{l.besov.fluxes.easy},
  we have $(g-g_m)\nabla \varphi \in \Besov{2}{\infty}{s}(\cu_0)$.
  Using the duality between Besov spaces, the relation~\eqref{e.dual.norms.relation},
  and Lemmas~\ref{l.besov.poincare} and~\ref{l.besov.fluxes.easy}, we obtain a constant
  $C < \infty$ independent of $m$ such that
  \begin{equation} \label{e.besov.convergence}
    \begin{aligned}
      \biggl| \fint_{\cu_0} (g-g_m) \,\a \nabla u \cdot \nabla \varphi \biggr|
       & \leq
      C \,\|(g-g_m) \nabla \varphi\|_{\Besov{2}{\infty}{s}(\cu_0)}
      \,\| \a \nabla u \|_{\BesovDualSum{2}{1}{-s}(\cu_0)} \\
       & \leq
      C\,\|g-g_m \|_{\underline{H}^1_\a(\cu_0)}
      \,\|\a^{\nicefrac{1}{2}} \nabla u \|_{\underline{L}^2(\cu_0)} \;\longrightarrow\; 0
    \end{aligned}
  \end{equation}
  as $m\to\infty$. Hence,
  \begin{equation*}
    \fint_{\cu_0} g_m \,\a \nabla u \cdot \nabla \varphi
    \;\longrightarrow\;
    \fint_{\cu_0} g \,\a \nabla u \cdot \nabla \varphi,
  \end{equation*}
  and passing to the limit in the identity above yields~\eqref{e.product.rule}.

  \smallskip

  \emph{Step 2: General case.}
  
  Let $g \in H^1_\a(\cu_0)$ and $\varphi \in C^\infty_c(\cu_0)$. Construct $g_k := \max(\min(g,k),-k)$, and note that $g_k \in H^1_\a(\cu_0) \cap L^\infty(\cu_0)$ by the remark after Lemma~\ref{l.chain.rule}. Note that $g_k \to g$ in $H^1_\a(\cu_0)$ as $k \to \infty$ by the dominated convergence theorem. Applying~\eqref{e.product.rule} to $g_k$ gives
  \begin{equation*}
    0
    = \fint_{\cu_0} \varphi \,\a \nabla u \cdot \nabla g_k
    + \fint_{\cu_0} g_k \,\a \nabla u \cdot \nabla \varphi\,.
  \end{equation*}
  We note that the proof is identical to the one in Step 1, since~\eqref{e.besov.convergence} only depends on the fact that $g_k \to g$ in $H^1_\a(\cu_0)$ as $k \to \infty$. Thus, we can pass to the limit as $k \to \infty$ to obtain~\eqref{e.product.rule} for general $g \in H^1_\a(\cu_0)$.

  \smallskip

  \emph{Step 3: Subsolutions.}
  We start with $0 \leq g \in H^1_\a(\cu_0) \cap L^\infty(\cu_0)$ and $0 \leq \varphi \in C^\infty_c(\cu_0)$ as in Step 1. Assuming that $u$ is a weak non-negative subsolution of $-\nabla \cdot (\a \nabla u) = 0$ in $\cu_0$, and that $\Theta_{s,1-s}(\cu_0;\baconsub\,) < \infty$ for some $s \in (0,1)$, we can use the same argument as in Step 1, where $0 \leq g_m \in C^\infty(\cu_0) \cap L^\infty(\cu_0)$ with $g_m \to g \in H^1_\a(\cu_0)$ to pass to the limit as $m \to \infty$ in
  \begin{equation*}
    0 \geq \fint_{\cu_0} \a \nabla u \cdot \nabla (g_m \varphi) = \fint_{\cu_0} \varphi \,\a \nabla u \cdot \nabla g_m + \fint_{\cu_0} g_m \,\a \nabla u \cdot \nabla \varphi,
  \end{equation*}
  which is possible since, $g_m \varphi \to g \varphi \in H^1_{\a,0}(\cu_0)$ and $g_m \to g$ in $H^1_\a(\cu_0)$, giving
  \begin{equation} \label{e.subsolution.test.bounded}
    0 \geq \fint_{\cu_0} \a \nabla u \cdot \nabla (g \varphi) = \fint_{\cu_0} \varphi \,\a \nabla u \cdot \nabla g + \fint_{\cu_0} g \,\a \nabla u \cdot \nabla \varphi\,.
  \end{equation}
  The general case now follows in the same way as in Step 2.
\end{proof}

\begingroup
\small 

\subsubsection*{\bf Acknowledgments}
S.~A. and T.~K. acknowledge support from the European Research Council (ERC) under the European Union's Horizon Europe research and innovation programme, grant agreement number 101200828. T.~K. was supported by the Academy of Finland. C.~D. and G.~M. acknowledge support from the European Research Council (ERC) under the European Union's Horizon Europe research and innovation programme, grant agreement number 101220121, and from the University of Parma through the action "Bando di Ateneo 2024 per la ricerca".

{
\bibliographystyle{alpha}
\bibliography{refs}
}
\endgroup

\end{document}